\documentclass[12pt]{article}
\usepackage{amsmath,amssymb,amsthm,bbm,mathabx,xcolor,graphicx,booktabs,ulem,natbib,xargs,fullpage}
\usepackage[english]{babel}     
\usepackage[utf8]{inputenc}
\usepackage[colorlinks]{hyperref}
\usepackage[shortlabels]{enumitem}
\usepackage{authblk}

\newcommand\iid{i.i.d.}
\newcommand\ie{i.e.}
\newcommand\wrt{w.r.t.}

\newcommand\bigo{O}
\newcommand\smallo{o}

\newcommand\gradientnoise{e}
\newcommand{\GNZ}{\boldsymbol{\zeta}}
\newcommand\id{\mathrm{I}}

\newcommand\Rset{\mathbb{R}}

\newcommand\bstheta{\boldsymbol{\theta}}
\newcommand\bszero{\mathbf{0}}

\newcommand\eqdistr{\stackrel{\mbox{\tiny\rm d}}{=}}  
\newcommand\convvague{\stackrel{\tiny{v}} {\longrightarrow}} 

\newcommand\rme{\mathrm{e}}

\newcommand\rmd{\mathrm{d}} 

\newcommand\esp{\mathbb E}
\newcommand\pr{\mathbb P}

\newcommand\ind[1]{\mathbbm{1}{\left\{#1\right\}}}

\newcommandx\quantilefunction[2][2=]{{#1}_{#2}^\leftarrow}

\newcommand\reduline{\bgroup\markoverwith
  {\textcolor{red}{\rule[-0.4ex]{2pt}{1.5pt}}}\ULon}

\newcommand\redsout{\bgroup\markoverwith {\textcolor{red}{\rule[0.4ex]{2pt}{1.5pt}}}\ULon}

 \usepackage{cleveref}
\newtheorem{theorem}{Theorem}[section]
\newtheorem{lemma}[theorem]{Lemma}

\newtheorem{assumption}[theorem]{Assumption}
\newtheorem{definition}[theorem]{Definition}

\newtheorem{example}[theorem]{Example}
\theoremstyle{remark}
\newtheorem{remark}[theorem]{Remark}

\crefname{assumption}{Assumption}{Assumptions}
\crefname{theorem}{Theorem}{Theorems}
\crefname{proposition}{Proposition}{Propositions}
\crefname{corollary}{Corollary}{Corollaries}

\crefname{equation}{}{}
\crefname{enumi}{}{}

\numberwithin{equation}{section}

\begin{document}

\title{Convergence of Stochastic Gradient Descent with
mini-batching and infinite variance}

\author[1,2]{Bartosz Głowacki}
\author[1]{Rafał Kulik}
\author[2]{Philippe Soulier}

\affil[1]{Department of Mathematics and Statistics, University of Ottawa}
\affil[2]{MODAL'X, Université Paris Nanterre}

\affil[ ]{
\texttt{bglowack@uottawa.ca},
\texttt{rkulik@uottawa.ca},
\texttt{psoulier@parisnanterre.fr}
}
\date{}

\maketitle


\begin{abstract}
  Stochastic gradient descent (SGD) with mini-batching is a standard tool in large-scale
  optimization, yet its theoretical properties under heavy-tailed gradient noise remain
  largely unexplored. In this paper we study SGD with increasing batch sizes when the
  gradient noise belongs to the domain of attraction of an $\alpha$-stable law with
  $\alpha\in(1,2)$. Building on existing results for the finite-variance regime and for
  heavy-tailed SGD without batching, we establish three main results. First, we derive
  $L^p$ moment bounds for the SGD error and show that increasing batch sizes lead to
  faster convergence rates. In particular, batching enables convergence in probability
  even for a constant stepsize. Second, we prove that the properly normalized SGD iterates
  converge in distribution to the stationary law of an Ornstein–Uhlenbeck process driven
  by an $\alpha$-stable Lévy process. Third, for Polyak–Ruppert averaging we obtain a
  stable limit theorem with a normalization that explicitly depends on the batch-size
  schedule.
\end{abstract}

\setcounter{tocdepth}{1}

\section{Introduction}
\label{sec:intro}

Stochastic gradient descent (SGD) is a class of stochastic optimization methods with a
number of theoretical results that establish moment bounds, convergence of the algorithm
in distribution or convergence of averages (\cite{robbins:munro:1951},
\cite{polyak-youditski:1992}, \cite{pelletier:1998}, \cite{kushner:yin:1997}). It has
become one of the most widely used algorithms for large-scale optimization in statistics
and machine learning. It is typically applied to problems where one aims to minimize an
objective function defined as an expected loss, or its empirical counterpart built from
data. In many modern applications the dataset is large, so computing the full gradient of
the objective at every iteration is computationally prohibitive. SGD addresses this by
replacing the full gradient with a stochastic estimate computed from a small subset of
data, leading to inexpensive iterations.

A common practical variant is a mini-batch SGD, where the stochastic gradient is formed by
averaging gradients over a batch of samples. In the classical finite-variance setting,
mini-batching is known to reduce the variability of the updates and improve stability, and
it is often analyzed through its effect on convergence rates and asymptotic variance
(\cite{gadat:gavra:2022}, \cite{leroux:2012}, \cite{johnson:zhang:2013},
\cite{juditsky:2024}, \cite{ma:2022}, \cite{umeda:Iiduka:2025}, \cite{kamo:Iiduka:2025}).

Theoretical results for SGD typically require light tails, in particular finite variance
assumptions. At the same time, it has been empirically observed that the gradient noise
may exhibit heavy tails. This led to some attempts to explain theoretically a source of
the heavy tails, as well as to preliminary theoretical results
(\cite{wang:gurbuzbalanan:shu:simsekli:erdogdu:2021},
\cite{gurbuzbalanan:simsekli:zhu:2021}, \cite{simsekli:2019}). It should be pointed out
that these studies are done in the context of online (one-pass) setting, since in the offline
case it is rather unclear what heavy tails could refer to (see
\cite{pavasovic:durmus:simsekli:2023}).

When studying theoretical performance of SGD, one is typically interested in three types
of results: a) moment bounds for the difference $\bstheta_N-\bstheta^*$, where
$\bstheta_N$ is the value of the $N$th iterate, while $\bstheta^*$ is the (unique)
minimizer; b) weak convergence of $w_N\left(\bstheta_N-\bstheta^*\right)$ (with
$w_N\to\infty$); c) weak convergence of $c_N^{-1}\sum_{i=1}^N(\bstheta_i-\bstheta^*)$.

To describe briefly the existing results, let us start with light tails. For a), the
results go back to the seminal works of \cite{robbins:munro:1951} and \cite{chung:1954}, with a plethora
of extensions (see e.g. \cite{moulines:bach:2011,
  nemirovski:juditsky:lan:shapiro:2009}), including mini-batching
(\cite{umeda:Iiduka:2025}, \cite{kamo:Iiduka:2025}, \cite{li:zhang:chen:smola:2014}). The
weak convergence question is typically established using the embedding technique
(\cite{pelletier:1998}), with the resulting rate of convergence being
$w_N=\sqrt{\gamma_N^{-1}}$, where $\{\gamma_i\}_{i\geq 1}$ is the learning
sequence. Finally, the Polyak-averaging in c) reduces the asymptotic variance and yields
the optimal convergence rates $c_N=\sqrt{N}$ and asymptotic normality
(\cite{polyak-youditski:1992}).

In the case of heavy tails the literature is very recent and considers online (one-pass)
scenario only. In the online SGD, heavy tails may stem from two sources. In the first
scenario one assumes that the learning rate is constant, then $\bstheta_N$ converges to a
random element $\bstheta_{\infty}$ that can be heavy-tailed, even if the input data are
light-tailed. This phenomenon has been well-known for a long time in the time series
literature (see \cite{buraczewski:damek:mikosch:2016}). In the context of SGD it was
re-discovered in \cite{gurbuzbalanan:simsekli:zhu:2021} with some analysis on the relation
between the learning rate, the batch size and the resulting tail index.

In the second scenario, heavy tails in the gradient noise stem from the input data.  In
this context, moment bounds for SGD were established in
\cite{wang:gurbuzbalanan:shu:simsekli:erdogdu:2021}, who proved $L^p$ bounds for the error
$\bstheta_N-\bstheta^*$. In this case the rate is of order $\bigo(\gamma_N^{p-1})$
for $p \in (1,2)$. Distributional limits for the normalized iterates were obtained by
\cite{blanchet:2024}, who showed that the properly rescaled SGD iterates converge to the
stationary distribution of an Ornstein--Uhlenbeck process driven by an $\alpha$-stable
L\'evy process. The convergence of averages is considered in
\cite{wang:gurbuzbalanan:shu:simsekli:erdogdu:2021}. Again, the limiting law is stable.

In this paper we consider online SGD with heavy tails and mini-batching. In order to be
able to study asymptotic behaviour, we make mini-batches $\{b_i\}_{i\geq 1}$
time-dependent. We cover three theoretical aspects. We start with the moment bounds
(\Cref{sec:moment-bound}), extending results from
\cite{wang:gurbuzbalanan:shu:simsekli:erdogdu:2021} to the mini-batching scheme. The rate
of convergence changes to $\bigo((\gamma_N/b_N)^{p-1})$, under similar assumptions.
Next in \Cref{sec:convergence}, we study weak convergence of the algorithm, extending the
results in \cite{blanchet:2024}. Again, the rate of convergence changes from
$\gamma_N^{1/\alpha-1}$ to $(\gamma_N/b_N)^{1/\alpha-1}$ (with an additional slowly
varying factor). The assumptions in our paper are similar to those in
\cite{blanchet:2024}. Finally, in \Cref{sec:averaging} we consider the averaging scheme,
with the rate of convergence $c_N$ being $(\sum_{i=1}^N b_i^{1-\alpha})^{1/\alpha}$, again
up to a slowly varying function.  The assumptions for the averaging scheme are much weaker
as compared to \cite{wang:gurbuzbalanan:shu:simsekli:erdogdu:2021}.

On the technical side, for the convergence of the algorithm, we proceed in a different way
as compared to \cite{blanchet:2024}, where the proof utilizes an embedding technique and
convergence to the stationary distribution of a L\'{e}vy-driven, Ornstein-Uhlenbeck process
(following the methodology from \cite{pelletier:1998}). Instead, we approximate
$w_N\left(\bstheta_N-\bstheta^*\right)$ by a weighted sum of independent regularly varying
random vectors. This way, the techniques of the proof for the weak convergence of the
algorithm and the weak convergence of the averages are virtually the same, making
extensive use of the modern theory of heavy tailed time series (see
\cite{kulik:soulier:2020}). Also, we believe, it opens the door to studying much more
complicated versions of heavy-tailed SGD.

We note in passing that we are not aware of any results that study convergence of the
algorithm $w_N\left(\bstheta_N-\bstheta^*\right)$ neither convergence of the averages in
case of batching and finite variance case. One can argue that in the former case, the
convergence can be easily obtained by introducing an additional sequence
$\{\sigma_i\}_{i\geq 1}$ that scales non-batched gradient noise:
$\sigma_i=b_i^{-1/2}$. Then, we can utilize the results in \cite{pelletier:1998}. However,
in the context of regular variation this additional sequence would involve the unknown
$\alpha$ and a slowly varying function. Therefore, in case of heavy tails one needs to
carefully consider the structure of the mini-batches and perform ``de-batching'', i.e.,
representing the mini-batch gradient as a weighted sum of individual gradients.

The paper is structured as follows. In \Cref{sec:prel} we introduce the model and set up
the notation. \Cref{sec:moment-bound} deals with the moment bound.  Convergence of the
algorithm is proved in \Cref{th:stable_law_convergence}, while
\Cref{th:averaging_stable_convergence} gives the limiting stable law for the averaging
scheme. 
Technical lemmas are postponed to the appendix.


\section{Preliminaries}\label{sec:prel}
In this section we introduce the notation and formulate the SGD problem. The precise assumptions
will be stated in each section separately, when dealing with moment bounds
(\Cref{sec:moment-bound}), convergence of the algorithm (\Cref{sec:convergence}) or convergence of
averages (\Cref{sec:averaging}).

Let $\psi:\Rset^d\times \Rset^d\to\Rset$ be a differentiable function.  Let $\xi_j$, $j=1,\ldots,n$
be a sample from the distribution of $\xi_0$, where $\xi_0$ is a random vector in
$\Rset^d$. 
Assume that $\esp[\nabla\psi(\bstheta;\xi_0)]<\infty$ and define $H:\Rset^d\to\Rset^d$ by
\begin{align*}
H(\bstheta)=\esp[\nabla\psi(\bstheta;\xi_0)]\;,
\end{align*}
where $\esp$ is the expectation with respect to the distribution of $\xi_0$. Let $\bstheta^*$ be a
root (not necessarily unique) of $H$:
\begin{align}
  \label{eq:zero-problem-population}
  H(\bstheta^*)=\bszero\;,
\end{align}
where $\bszero\in\Rset^d$ is the zero vector.  Our assumptions will guarantee that the population
minimizer is unique. This differs from the standard formulation in the SGD literature. Indeed, it is typically assumed
that
\begin{align}
  \label{eq:minimization-problem-population}
  \bstheta^*={\rm argmin}_{\bstheta \in \Rset^d} \Phi(\bstheta), 
\end{align}
where 
\begin{align*}
  \Phi(\bstheta):=\esp[\psi(\bstheta;\xi_0)]\;.
\end{align*} 
However, in the context of heavy tails, it may happen that $\esp[\psi(\bstheta;\xi_0)]$ is infinite,
while $\esp[\nabla\psi(\bstheta;\xi_0)]<\infty$. See \Cref{xmpl:linear-regression} below. 
On the other hand, when $\Phi$ is well defined, then under some usual conditions we have $\esp[\nabla\psi(\bstheta;\xi_0)]=\nabla \esp[\psi(\bstheta;\xi_0)]$, both gradients are equal, $\nabla\Phi=H$ and the minimizer of $\Phi$ is the zero of $H$. The usual conditions above are e.g. 
\begin{itemize}
\item For almost every $\xi_0$, the function $\bstheta \mapsto \psi(\bstheta;\xi_0)$ 
is differentiable;

\item There exists an integrable random variable $C(\xi_0)$ such that
for all $\bstheta$, $\|\nabla \psi(\bstheta;\xi_0)\| \le C(\xi_0)$ and 
$\mathbb{E}[C(\xi_0)] < \infty$.
\end{itemize}
We want to produce a random sequence $\{\bstheta_i, i=1,\ldots,N\}$, that converges (in some sense,
as $N\to\infty$) to $\bstheta^*$ defined as a solution to \eqref{eq:zero-problem-population} (or
defined in \eqref{eq:minimization-problem-population}). For this, we will perform the stochastic
gradient descent algorithm, where at the iteration $i+1$ we will approximate $H(\bstheta_i)$ using
the empirical averages with growing batch sizes.  Let $\{b_i\}_{i\geq 1}$ be a non-decreasing
sequence of positive integers. Denote then
\begin{align*}
  \Omega_{i+1}=\left\{\sum_{k=1}^ib_k+1,\ldots,\sum_{k=1}^{i+1}b_k\right\},\ \ i=0,\ldots,N-1\;.
\end{align*}
Hence, $|\Omega_{i}|=b_i$. At time $i+1$ we approximate $H(\bstheta_i)$ by
$\widetilde{H}_{i+1}(\bstheta_i)$:
\begin{align*}
  \widetilde{H}_{i+1}(\bstheta)=\frac{1}{b_{i+1}}\sum_{j\in \Omega_{i+1}}\nabla\psi(\bstheta;\xi_j)\;. 
\end{align*}
Furthermore, let $\{\gamma_i\}_{i\geq 1}$ be a non-increasing positive sequence called the learning
rate sequence.  The stochastic gradient descent (SGD) algorithm with batching is defined through
the following recursive procedure:
\begin{itemize}
\item Start with $\bstheta_0$.
\item For $i=0,\ldots,N-1$,
  \begin{align}
    \label{eq:SGD}
    \bstheta_{i+1}=\bstheta_i -\gamma_{i+1} \widetilde{H}_{i+1}(\bstheta_i)\;.
  \end{align}
\end{itemize} 
The online sequence $\{\bstheta_{i}\}_{i\geq 1}$ depends on the past data
$\xi_i,0\leq i\leq b_1+\cdots+b_i$ that is, $\bstheta_i$ is $\mathcal{F}_{i}$-measurable, where
$\mathcal{F}_{i}=\sigma(\xi_j,j=0,\ldots,\sum_{k=1}^ib_k)$. This algorithm is said to be
single-pass, that is, for each iteration we use only one block of data.

The convergence properties of the algorithm will depend on the assumptions on the gradient noise
sequence defined as follows.
\begin{definition}
  \label{def:gradient-noise-sequence}
  \index{gradient~noise~sequence} The gradient noise sequence
  $\{\gradientnoise_i(\bstheta)\}_{i\geq 1}$ is defined as
\begin{align*}
  \gradientnoise_{i+1}(\bstheta)
  =\frac{1}{b_{i+1}}\sum_{j\in\Omega_{i+1}}\left\{\nabla \psi(\bstheta;\xi_j)
  -\esp[\nabla \psi(\bstheta;\xi_j)]\right\}=\widetilde{H}_{i+1}(\bstheta)-H(\bstheta)\;.
\end{align*}
\end{definition}
Thus, the SGD algorithm is written as follows:
\begin{align}
  \label{eq:sgd_algorithm}
  \bstheta_{i+1}=\bstheta_i-\gamma_{i+1}H(\bstheta_i)-\gamma_{i+1}\gradientnoise_{i+1}(\bstheta_i)\;, \ \
  i=0,\ldots,N-1\;.
\end{align}
We will state our assumptions in the language of the gradient noise, instead of $\nabla \psi(\bstheta;\xi_j)$. It will allow for potential generalizations, such as dependence between $\psi(\bstheta;\xi_j)$, $j\in \Omega_{i+1}$.\\

We will use the following notation:
\begin{itemize}
\item $N$ for the number of iterations,
\item $n=b_1+\cdots+b_N$ for the total sample size,
\item the index $i=1,\ldots,N$ for the iterations,
\item $i=1,\ldots,n$ for data index of $\xi_j$.
\end{itemize}
\begin{example}\label{xmpl:linear-regression}{\rm 
  Consider the univariate linear regression model
  \begin{align*}
    Y_i=\bstheta^* X_i+\epsilon_i\;, \ \ i\geq 0\;,
  \end{align*} 
  where $\{(X,\epsilon),(X_i,\epsilon_i),i\geq 0\}$ are \iid~random vectors with $\esp[X^2]<\infty$,
  $\esp[|\epsilon|]<\infty$, $\esp[\epsilon]=0$ and $\esp[\epsilon^2]=\infty$. In this case
  $\xi_i:=(X_i,Y_i)$.  Consider the square loss function
  $\psi(\bstheta;\xi)=\frac{1}{2}\left(Y-\bstheta X\right)^2$.  Then $\Phi$ does not exist. On the
  other hand
  \begin{align*}
    &\nabla\psi(\bstheta,\xi)=-X(Y-\bstheta X)=X^2(\bstheta-\bstheta^*)+X\epsilon\;;
  \end{align*}
  and $\esp[\nabla\psi(\bstheta;\xi)]=\esp[X^2](\bstheta-\bstheta^*)<\infty$. 
  Also, for a future reference, 
  $$
  \esp[|\nabla\psi(\bstheta;\xi)-\nabla\psi(\bstheta^*;\xi)|^p]\leq \esp[|X|^{2p}]|\bstheta-\bstheta^*|^p
  $$
  whenever $\esp[|X|^{2p}]<\infty$. It extends to a $d$-dimensional case of predictors in a straightforward manner. 
  }
\end{example}
\subsection{Regular variation}
In this section, we refer to \cite[Appendix~B]{kulik:soulier:2020}.  Let $\|\cdot\|$ be
any norm on $ \Rset^d$.  A measure on $\Rset^d\setminus\{0\}$ is said to be boundedly
finite if $\mu(B)<\infty$ for every set $B$ separated from $\bszero$, \ie\ there exists
$\epsilon>0$ such that $x\in B \implies \|x\|>\epsilon$.
\begin{definition}
  \label{def:vagueconvergence}
  A sequence of boundedly finite Borel measures on $\Rset^d$ $\{\mu_n\}_{n\geq1}$ converges
  vaguely to $\mu$ on $\Rset^d\setminus\{\bszero\}$, denoted $\mu_n\convvague\mu$, if for
  every Borel set $B\subset \Rset^d\setminus\{\bszero\}$ separated from zero and such that
  $\mu(\partial B)=0$, we have
  \begin{align*}
    \lim_{n\to\infty} \mu_n(B) = \mu(B)\;. 
  \end{align*}
\end{definition}
\begin{definition}
  \label{def:homogeneousmeasure}
  A measure $\mu$ on $\Rset^d\setminus\{\bszero\}$ is said to be homogeneous with index
  $\alpha$ if
  \begin{align*}
    \mu(tA) = t^{-\alpha} \mu(A) \; ,
  \end{align*}
  for all Borel sets $A$ such that $\mu(A)<\infty$.
\end{definition}

\begin{definition}
  \label{def:regvar}
  A random vector $\zeta$ is regularly varying if the sequence of measures $\{\mu_n\}_{n\geq 1}$ defined
  on $\Rset^d\setminus\{0\}$ by
  \begin{align*}
    \mu_n = \frac{\pr(n^{-1}\zeta\in \cdot)}{\pr(\|\zeta\|>n)} 
  \end{align*}
  converges vaguely to a boundedly finite measure $\mu$. 
\end{definition}
If $\zeta$ is regularly varying, then the measure $\mu$ (called the exponent measure) is homogeneous and the function
$t\mapsto\pr(\|\zeta\|>t)$ is regularly varying with the same index. Denoting $F_0$ the
cdf of $\|\zeta\|$, $F^{-1}$ its left-continuous inverse and $a_n=F_0^\leftarrow(1-1/n)$,
it also holds that
\begin{align*}
  n\pr(a_n^{-1}\zeta\in\cdot) \convvague \mu \; .
\end{align*}
\subsection{Convergence to a stable law}
\label{subsec:constable}
Our main results (\Cref{th:stable_law_convergence,th:averaging_stable_convergence}) rely
on the point process technique to prove the weak convergence to a stable law of sums of
triangular arrays of independent regularly varying random variables.

\begin{theorem}
  \label{theo:triangular_convergence}
  Let $\{X_{i,n},{1\leq i\leq m_n,n\geq 1}\}$ be a triangular array of rowwise independent
  random vectors such that $\esp[\|X_{i,n}\|]<\infty$. Define
  \begin{align*}
    S_n =\sum_{i=1}^{m_n}\left\{X_{i,n}-\esp[X_{i,n}]\right\}\;.
  \end{align*}
  Let $\nu$ be an $\alpha$-homogeneous Borel measure on $\Rset^d\setminus\{0\}$. Assume
  that
  \begin{align}
    \label{eq:convvaguetriangular}
    \sum_{i=1}^{m_n} & \ \pr\!\left({X_{i,n}}\in \cdot \right) \convvague \nu \; , \\
    \label{eq:ANSJ}
    \lim_{\epsilon\to 0} \ \limsup_{n\to\infty}   & \ \esp\left[\|X_{i,n}\|^2
    \ind{\|X_{i,n}\|\le \epsilon}\right]  = 0 \; 
  \end{align}
  as $n\to\infty$ and for every $\epsilon>0$,
  \begin{align}
    \label{eq:nullarray}
    \lim_{n\to\infty} \max_{1\le i\le m_n} & \pr(\|X_{i,n}\|>\epsilon )  = 0 \; .
  \end{align}
  Then $S_n$ converges weakly to a stable random vector $\Lambda$ with characteristic
  function given by
  \begin{align}
    \label{eq:Lambda_cf_823-d>1}
    \log\esp\left[e^{i u^\top \Lambda}\right]=\int_{\Rset^d\setminus\{0\}}
    \left\{e^{iu^\top x}-1-iu^\top x\right\}\,\nu(\rmd x) \; , \qquad u\in\Rset^d \; .
  \end{align}
\end{theorem}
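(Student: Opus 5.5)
The plan is a truncation argument. Fix $\epsilon>0$ and split each summand as $X_{i,n}=X_{i,n}\ind{\|X_{i,n}\|\le\epsilon}+X_{i,n}\ind{\|X_{i,n}\|>\epsilon}$. This gives $S_n=S_n^{\le\epsilon}+S_n^{>\epsilon}$, where each part is centered by its own expectation. I read \eqref{eq:ANSJ} as a condition on the row sum $\sum_{i=1}^{m_n}\esp[\|X_{i,n}\|^2\ind{\|X_{i,n}\|\le\epsilon}]$, which is the form in which it is used. By row-wise independence and Chebyshev's inequality, $\esp[\|S_n^{\le\epsilon}\|^2]$ is bounded by this sum. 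Hence $\lim_{\epsilon\to0}\limsup_n\pr(\|S_n^{\le\epsilon}\|>\eta)=0$ for every $\eta>0$. By the standard approximation lemma for weak convergence (e.g.\ Billingsley's Theorem 3.2), it then suffices to prove two things: that $S_n^{>\epsilon}\Rightarrow\Lambda_\epsilon$ for each $\epsilon$ with $\nu(\{\|x\|=\epsilon\})=0$, and that $\Lambda_\epsilon\Rightarrow\Lambda$ as $\epsilon\to0$.

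For the large-jump part I compute characteristic functions. By independence,
\begin{align*}
\log\esp\bigl[e^{iu^\top S_n^{>\epsilon}}\bigr]=\sum_{i=1}^{m_n}\log\bigl(1+\esp\bigl[(e^{iu^\top X_{i,n}}-1)\ind{\|X_{i,n}\|>\epsilon}\bigr]\bigr)-iu^\top\sum_{i=1}^{m_n}\esp\bigl[X_{i,n}\ind{\|X_{i,n}\|>\epsilon}\bigr].
\end{align*}
Write $z_{i,n}=\esp[(e^{iu^\top X_{i,n}}-1)\ind{\|X_{i,n}\|>\epsilon}]$. Then $|z_{i,n}|\le2\pr(\|X_{i,n}\|>\epsilon)$, so by the null-array condition \eqref{eq:nullarray}, $\max_i|z_{i,n}|\to0$. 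Moreover $\sum_i|z_{i,n}|^2\le 2\max_i|z_{i,n}|\sum_i\pr(\|X_{i,n}\|>\epsilon)\to0$, because by \eqref{eq:convvaguetriangular} the row sum of tail probabilities converges to the finite number $\nu(\{\|x\|>\epsilon\})$. Therefore the logarithms can be replaced by $z_{i,n}$. The function $x\mapsto(e^{iu^\top x}-1)\ind{\|x\|>\epsilon}$ is bounded, continuous off a $\nu$-null set, and supported on a set separated from $0$. So vague convergence gives
\begin{align*}
\sum_i z_{i,n}\to\int_{\|x\|>\epsilon}(e^{iu^\top x}-1)\,\nu(\rmd x).
\end{align*}

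The main obstacle is the centering term $\sum_i\esp[X_{i,n}\ind{\|X_{i,n}\|>\epsilon}]$. The integrand $x\ind{\|x\|>\epsilon}$ is unbounded, so vague convergence alone does not suffice. I would truncate once more at a large level $M$: the contribution of $\epsilon<\|x\|\le M$ converges by vague convergence, and I need
\begin{align*}
\lim_{M\to\infty}\limsup_{n\to\infty}\sum_{i=1}^{m_n}\esp\bigl[\|X_{i,n}\|\ind{\|X_{i,n}\|>M}\bigr]=0 .
\end{align*}
This uniform integrability is where $\alpha\in(1,2)$ enters, and it is the step I expect to be delicate. I would first try to derive it from \eqref{eq:convvaguetriangular} together with the integrability $\esp[\|X_{i,n}\|]<\infty$, writing $\esp[\|X\|\ind{\|X\|>M}]=M\pr(\|X\|>M)+\int_M^\infty\pr(\|X\|>t)\,\rmd t$ and controlling the integral with a Potter-type bound. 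That bound is uniform in $t$ and holds in the intended application, where $X_{i,n}=w_{i,n}\zeta_i$ with the $\zeta_i$ regularly varying. If it cannot be obtained from the stated hypotheses, I would make this uniform bound explicit as part of the setting. With it in hand,
\begin{align*}
\log\esp\bigl[e^{iu^\top S_n^{>\epsilon}}\bigr]\to\int_{\|x\|>\epsilon}\bigl\{e^{iu^\top x}-1-iu^\top x\bigr\}\,\nu(\rmd x).
\end{align*}

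Finally, I let $\epsilon\to0$. By $\alpha$-homogeneity, $\nu(\{\|x\|>t\})=ct^{-\alpha}$. For $\alpha\in(1,2)$ this gives $\int\min(\|x\|^2,\|x\|)\,\nu(\rmd x)<\infty$. Since $|e^{iu^\top x}-1-iu^\top x|\le C\min(\|u\|^2\|x\|^2,\|u\|\|x\|)$, dominated convergence shows the truncated exponents converge to the right-hand side of \eqref{eq:Lambda_cf_823-d>1}. The limit is continuous at $u=0$, so Lévy's continuity theorem yields $\Lambda_\epsilon\Rightarrow\Lambda$, which completes the proof.
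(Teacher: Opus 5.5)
Your route differs from the one the paper relies on. The paper gives no proof of its own. It appeals to Kulik and Soulier: \eqref{eq:convvaguetriangular} and \eqref{eq:nullarray} give convergence of the point process $\sum_i\delta_{X_{i,n}}$ to a Poisson process with mean measure $\nu$ (their Theorem~7.1.21), and a continuous-mapping argument, in which \eqref{eq:ANSJ} disposes of the small jumps, then yields the sum (their Theorem~8.3.8). You avoid point processes altogether. You remove the small jumps by the $L^2$ bound; your row-sum reading of \eqref{eq:ANSJ} is the intended one, and it is the form verified later in \eqref{eq:ANSJ-convergence-SGD} and \eqref{eq:ANSJ-averaging}. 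You handle the large jumps through characteristic functions, $\log(1+z)=z+O(|z|^2)$ and vague convergence. Your argument is elementary and self-contained. The point-process route needs more machinery but gives more, e.g.\ joint convergence with the extremes and functional versions.

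The step you flagged is a real obstruction, and your first idea for it (a Potter-type bound derived from the hypotheses) cannot work for general arrays. The condition $\lim_{M\to\infty}\limsup_{n\to\infty}\sum_{i=1}^{m_n}\esp[\|X_{i,n}\|\ind{\|X_{i,n}\|>M}]=0$ is not implied by the stated hypotheses, and without it the theorem is false. To see this, fix a unit vector $\mathbf{e}$ and append to each row of an admissible array one summand $Y_n$, equal to $n\mathbf{e}$ with probability $1/n$ and to $0$ otherwise. Then:
\begin{itemize}
\item $\esp[Y_n]=\mathbf{e}$;
\item the mean measure changes by $n^{-1}\delta_{n\mathbf{e}}$, which vanishes on sets separated from $0$, so the vague limit is still $\nu$;
\item the null-array condition still holds;
\item $Y_n$ contributes nothing to the truncated second moments once $n>\epsilon$.
\end{itemize}
But $Y_n-\esp[Y_n]\to-\mathbf{e}$ in probability, so $S_n\Rightarrow\Lambda-\mathbf{e}$. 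The point-process argument has the same defect. Continuous mapping controls $\sum_iX_{i,n}\ind{\|X_{i,n}\|>\epsilon}$, but not the centering constants $\sum_i\esp[X_{i,n}\ind{\|X_{i,n}\|>\epsilon}]$, which involve first moments that \eqref{eq:convvaguetriangular} does not see.

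So your fallback, adding this uniform integrability as a hypothesis, is a necessary correction of the statement rather than a weakness of your proof. The statement should also make explicit the range $\alpha\in(1,2)$ that you use. With these additions your proof is complete. The extra hypothesis also costs nothing in the paper's applications. For $X=W\GNZ_0$ with uniformly bounded weights, Karamata's theorem $\esp[\|\GNZ_0\|\ind{\|\GNZ_0\|>x}]\sim\frac{\alpha}{\alpha-1}x\overline{F_0}(x)$, combined with the Potter bounds already used for the mean measure, gives a bound of order $M^{1-\alpha+\eta}$.
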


\begin{remark}
  Conditions (\ref{eq:convvaguetriangular}) and (\ref{eq:nullarray}) imply that the point
  process of exceedances $\sum_{i=1}^{m_n} \delta_{X_{n,i}}$ converges weakly to a Poisson
  point process with mean measure $\nu$
  (cf. \cite[Theorem~7.1.21]{kulik:soulier:2020}). Condition (\ref{eq:ANSJ}) is known as
  asymptotic negligibility of small jumps, which is needed to apply a continuous mapping
  argument and obtain the stated convergence. See e.g. \cite[Theorem 8.3.8]{kulik:soulier:2020}.
\end{remark}


\section{Moment bound for the algorithm}
\label{sec:moment-bound}

In this section we consider the SGD algorithm with batching and present moment bounds and
convergence rates for the $L^p$ error of the SGD iterates.  For a vector norm $\|\cdot\|$
define the associated matrix norm by
\begin{align*}
  \vvvert A\vvvert = \sup_{u\in\Rset^d:\|u\|=1} \|Au\| \; .
\end{align*}
\begin{assumption}
  \label{assumption:function_H}
  Let $\Theta\subset\Rset^d$ be a compact set, $H:\Theta\to\Rset^d$ a continuously
  differentiable function. There exist constants $0<a<b$ such that for all
  $\bstheta\in\Theta$ and any eigenvalue $\sigma$ of $\nabla H(\bstheta)$,
  \begin{align}
  \label{eq:assumption_functionH_1}
  a\leq \sigma\leq b\;.
  \end{align}
\end{assumption}
\begin{assumption}[Learning rate and batch-size] 
  \label{assumption:learning_rate_general}
  \begin{enumerate}
  \item The learning rate $\{\gamma_i\}_{i\geq 1}$ is non-increasing and
    $\sum_{i=1}^{\infty}\gamma_i=\infty$.
  \item The batch size  $\{b_i\}_{i\geq 1}$ is non-decreasing.
  \item $\lim_{i\to\infty} \gamma_i/b_i=0$.
  \item The sequence $ \{(\gamma_{i}/b_{i})^{p-1}\prod_{j=1}^i(1-\gamma_j)^{-1}\}_{i\geq 1}$
    is ultimately non-decreasing.
  \end{enumerate}
\end{assumption}
\begin{assumption}
  \label{assumption:gradient_noise}
  The $\Rset^d$-valued stochastic processes
  $(\gradientnoise_i(\bstheta),\bstheta\in\Theta),i\geq 1$ are independent,
  $\esp[\|e_{i+1}(\bstheta)\|]<\infty$, $\esp[e_{i+1}(\bstheta)]=\bszero$ and there exist $p>1$
  and a constant $C_p$ such that for all $\bstheta\in\Theta$,
  \begin{align}
    \label{eq:assumption:gradient_noise_1}
    \esp[\|\gradientnoise_{i+1}(\bstheta)\|^{p}]\leq \frac{C_p}{ b_{i+1}^{p-1}}\;.
  \end{align}
 
\end{assumption}
For the gradient noise as in \Cref{def:gradient-noise-sequence}, by Marcinkiewicz–Zygmund
inequality, we have
  \begin{align*}
    \esp[\|\gradientnoise_{i+1}(\bstheta)\|^{p}]\leq \frac{C_p}{ b_{i+1}^{p-1}}
    \esp[\|\nabla\psi(\bstheta,\xi_0)\|^p] \; .
  \end{align*}
  Thus \Cref{assumption:gradient_noise} holds if
  \begin{align*}
    \sup_{\bstheta\in\Theta} \esp[\|\nabla\psi(\bstheta,\xi_0)\|^p]<\infty \; .
  \end{align*}
  The main result of this section is the following moment bound.
\begin{theorem}
  \label{th:p_framework_convergence_b>1}
  Let
  \Cref{assumption:function_H,assumption:learning_rate_general,assumption:gradient_noise}
  hold.  Then as $N\to\infty$,
  \begin{align}
    \label{eg:Lp_convergence}
    \esp[\|\bstheta_N-\bstheta^*\|^p]
    =\bigo\left(\left(\frac{\gamma_N}{b_N}\right)^{p-1}\right)\;.
  \end{align}
\end{theorem}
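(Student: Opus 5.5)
The plan is to adapt the argument of \cite{wang:gurbuzbalanan:shu:simsekli:erdogdu:2021} to batching. We work with $\Delta_i=\bstheta_i-\bstheta^*$ and follow the $p$-th power of a suitable norm in which the linearized recursion is contractive.

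\textbf{Step 1: a contractive norm.} Since $H(\bstheta^*)=\bszero$, the mean value theorem gives $H(\bstheta_i)=A_i\Delta_i$ with $A_i=\int_0^1\nabla H(\bstheta^*+s\Delta_i)\,\rmd s$. \Cref{assumption:function_H} bounds the eigenvalues of these matrices in $[a,b]$ on $\Theta$, and we assume, as is implicit, that the iterates remain in $\Theta$. Because $\Theta$ is compact, we choose a norm $\|\cdot\|$ (equivalent to the Euclidean one) such that $\vvvert \id-\gamma A\vvvert\le 1-c\gamma$ for some $c>0$, all $\gamma\le\gamma_0$, and all relevant $A$. By rescaling time (or absorbing $c$ into the $\gamma_j$, as in the factor $\prod(1-\gamma_j)^{-1}$ of \Cref{assumption:learning_rate_general}), we may take $c=1$. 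This gives
\begin{align*}
\Delta_{i+1}=(\id-\gamma_{i+1}A_i)\Delta_i-\gamma_{i+1}\gradientnoise_{i+1}(\bstheta_i)\;,\qquad \|(\id-\gamma_{i+1}A_i)\Delta_i\|\le(1-\gamma_{i+1})\|\Delta_i\|\;.
\end{align*}

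\textbf{Step 2: a one-step $L^p$ inequality for $p\in(1,2]$.} We use the elementary inequality, valid for a smooth norm raised to the power $p$:
\begin{align*}
\|x+y\|^p\le\|x\|^p+p\langle J_p(x),y\rangle+C\|y\|^p\;,
\end{align*}
where $J_p$ is the duality map of $\|\cdot\|^p$. This inequality is what replaces the second-order expansion when the variance is infinite, and the price is the need for a norm with $p$-uniformly smooth power. Apply it with $x=(\id-\gamma_{i+1}A_i)\Delta_i$, which is $\mathcal F_i$-measurable, and $y=-\gamma_{i+1}\gradientnoise_{i+1}(\bstheta_i)$. By independence in \Cref{assumption:gradient_noise}, conditionally on $\mathcal F_i$ the noise has mean zero, so the cross term vanishes in expectation. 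The conditional $p$-th moment is at most $C_p b_{i+1}^{1-p}$, uniformly in $\bstheta\in\Theta$, by \eqref{eq:assumption:gradient_noise_1}. Setting $u_i=\esp\|\Delta_i\|^p$, we obtain
\begin{align*}
u_{i+1}\le(1-\gamma_{i+1})^p u_i+C\gamma_{i+1}^p b_{i+1}^{1-p}\le(1-\gamma_{i+1})u_i+C\gamma_{i+1}\Big(\frac{\gamma_{i+1}}{b_{i+1}}\Big)^{p-1}\;.
\end{align*}

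\textbf{Step 3: solving the recursion.} Let $P_i=\prod_{j\le i}(1-\gamma_j)$; this is eventually positive since $\gamma_i\to0$ is implied by the assumptions for large $i$. Unrolling the recursion,
\begin{align*}
u_N\le P_N u_0+C P_N\sum_{i=1}^N \gamma_i\Big(\frac{\gamma_i}{b_i}\Big)^{p-1}P_i^{-1}\;.
\end{align*}
Since $\sum\gamma_i=\infty$, we have $P_N\to0$. Monotonicity in \Cref{assumption:learning_rate_general}(iv) gives $(\gamma_i/b_i)^{p-1}P_i^{-1}\le(\gamma_N/b_N)^{p-1}P_N^{-1}$ up to finitely many initial terms, which are absorbed into $P_N\cdot O(1)$. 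The sum is therefore at most $(\gamma_N/b_N)^{p-1}P_N^{-1}\sum_{i\le N}\gamma_i$.

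This last step is the main obstacle. The naive bound loses the factor $\sum_{i\le N}\gamma_i$, so we must instead use the discrete integration identity $\gamma_iP_i^{-1}=P_i^{-1}-P_{i-1}^{-1}$. The natural first attempt is an Abel-summation argument: with $v_i=(\gamma_i/b_i)^{p-1}$ non-increasing and $v_iP_i^{-1}$ non-decreasing, bound $\sum_i v_i(P_i^{-1}-P_{i-1}^{-1})$ by a constant times $v_NP_N^{-1}$, which yields $u_N=O((\gamma_N/b_N)^{p-1})$. If that fails, the fallback is to exploit the slack from keeping $(1-\gamma)^p\le 1-\gamma-\delta\gamma$ in Step 2, and to use (iv) applied with a slightly larger exponent in place of $1-\gamma_j$. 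The remaining term $P_Nu_0$ is $o((\gamma_N/b_N)^{p-1})$ by (iv). Finally, \Cref{assumption:learning_rate_general}(iii) ensures the bound actually tends to zero, which completes the proof.
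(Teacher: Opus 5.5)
\textbf{The gap is in Step 3.} Your Steps 1--2 reproduce the paper's argument: the one-step $L^p$ inequality, the cross term killed by conditional centring, the uniform contraction from \Cref{assumption:function_H}, and the recursion with forcing $\gamma_{i+1}(\gamma_{i+1}/b_{i+1})^{p-1}$. But the route you put first in Step 3 does not work.

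Write $v_i=(\gamma_i/b_i)^{p-1}$ and $P_i=\prod_{j\le i}(1-\gamma_j)$. Monotonicity of $v_i$ and of $v_iP_i^{-1}$ does not by itself give $\sum_{i\le N}v_i(P_i^{-1}-P_{i-1}^{-1})=\bigo(v_NP_N^{-1})$. Abel summation gives $v_NP_N^{-1}-v_1+\sum_{i<N}(v_i-v_{i+1})P_i^{-1}$. Item (iv) only yields $v_i-v_{i+1}\le\gamma_{i+1}v_i$, and feeding this in returns the sum you started from.

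The borderline case shows this is fatal. Take $v_i=P_i$, i.e.\ constant $\gamma$ and geometrically growing $b_i$; this satisfies (iv). The sum then equals $\sum_{i\le N}\gamma_i\to\infty$, and the recursion $u_{i+1}=(1-\gamma)u_i+C\gamma v_{i+1}$ has solution $(1-\gamma)^N(u_0+C\gamma N)$, which is of order $Nv_N$. So once you weaken $(1-\gamma_{i+1})^p$ to $1-\gamma_{i+1}$ at the end of Step 2, the result no longer follows. The contraction in the recursion must beat the factor in (iv) by a multiple of $\gamma_i$.

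\textbf{Your fallback is the correct mechanism.} It is exactly what the paper's \Cref{lem:bound-recursion-toeplitz} encodes. Let $\lambda_i$ be the true contraction factor and $w_i=v_i\prod_{j\le i}\lambda_j^{-1}$. Then $\gamma_iw_i=\frac{\gamma_iv_i}{v_i-v_{i-1}\lambda_i}(w_i-w_{i-1})$, so the sum telescopes to $\bigo(w_N)$ as soon as that ratio is bounded, which is hypothesis \eqref{eq:deltaw}. With $\lambda_i=(1-\gamma_i)^p$, (iv) gives $v_i-v_{i-1}\lambda_i\ge v_{i-1}[(1-\gamma_i)-(1-\gamma_i)^p]\ge(p-1)\gamma_i(1-\gamma_i)v_{i-1}$, which is \eqref{eq:deltaw}. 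This should be the proof, not a contingency.

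It does, however, rely on $c=1$ from Step 1, and that normalisation is not free. With contraction $1-c\gamma$ the slack is about $(pc-1)\gamma_i$. Replacing $\gamma_j$ by $c\gamma_j$ changes (iv), and (iv) is inherited only when $c\ge1$. This is a real restriction, not bookkeeping. Take $d=1$, $H(\bstheta)=a(\bstheta-\bstheta^*)$, bounded additive noise, constant $\gamma$, $b_i=2^i$ and $\bstheta_0\neq\bstheta^*$. Jensen gives $\esp[|\bstheta_N-\bstheta^*|^p]\ge(1-a\gamma)^{pN}|\bstheta_0-\bstheta^*|^p$. This is not $\bigo(2^{-(p-1)N})$ when $(1-a\gamma)^p>2^{1-p}\ge1-\gamma$, even though (iv) holds; for instance $p=3/2$, $\gamma=0.3$, $a=0.1$. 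The paper's proof glosses over the same mismatch between the $1-L\gamma$ in its recursion and the $1-\gamma_j$ in (iv). You need either $pc>1$, or (iv) stated with the actual contraction rate; the latter is automatic in the regularly varying regime described in the comments.

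\textbf{Smaller points.}
\begin{itemize}
\item $\gamma_i\to0$ is not implied by the assumptions: the paper explicitly allows constant $\gamma$ with $b_i\to\infty$. So $\gamma_i\le\gamma_0$ has to be assumed, as the paper does implicitly.
\item Item (iv) only gives $P_Nu_0=\bigo(v_N)$, not $\smallo(v_N)$. You do get $\smallo(v_N)$ with $\lambda_i=(1-\gamma_i)^p$.
\item The single $p$-smooth norm in Step 1 that contracts every averaged Jacobian does not follow from compactness and eigenvalue bounds alone. It exists (the Euclidean norm) when $\nabla H$ is symmetric. The paper's \Cref{lemma:function_H} makes the same leap.
\end{itemize}
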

\paragraph{Comments:} 
\begin{enumerate}
\item 
Let $\id$ be the identity matrix of the dimension $d$. 
\Cref{assumption:function_H} 
gives a bound on 
$\vvvert\id-t\nabla H(\bstheta)\vvvert^p$
and implies uniqueness of $\bstheta^*$. See \Cref{lemma:function_H}. 
  \item \Cref{assumption:learning_rate_general} allows the learning rate to be
    non-increasing and the batch size to be constant. However, the ratio of the learning
    rate to the batch size must tend to zero.
  \item The last part of \Cref{assumption:learning_rate_general} is strictly technical.
    It is fulfilled when the sequence $\{\gamma_i\}_{i\geq 1}$ is regularly varying with
      index $\rho\in[0,1)$, $\gamma_{i+1}/\gamma_i=1+O(1/i)$ and
      $b_{i+1}/b_i=1+O(1/i)$. The last condition holds for usual regularly varying
      sequences such as $ci^\delta\log^\beta(i)$, $\beta,\delta\in\Rset$.
  \item If we consider $b_i=b\geq 1$ for $i\geq 1$
    (no batching or a constant batch size), then we are in the setup of \cite[Theorem
    3]{wang:gurbuzbalanan:shu:simsekli:erdogdu:2021} with the following result
  \begin{align*}
    \esp[\|\bstheta_N-\bstheta^*\|^p]=\bigo\left(\gamma_N^{p-1}\right)\;.
  \end{align*}
\item We can also consider a constant learning rate $\gamma_i=\gamma>0$, but in
  this case in order to make the algorithm converge the batch size must tend to $\infty$.
\item For $p=2$ without batching (finite variance case) the classical result is
  \begin{align*}
    \esp[\|\bstheta_N-\bstheta^*\|^2]=\bigo\left(\gamma_N\right)\;.
  \end{align*}
  See \cite{chung:1954}, \cite[Eq.(6)]{moulines:bach:2011},
  \cite[Eq. 2.8]{nemirovski:juditsky:lan:shapiro:2009} or \cite[Theorem
  1]{li:zhang:chen:smola:2014}.  The results in the aforementioned references are stated
  under different assumptions on the learning rate (e.g. the specific form
  $\gamma_i\sim i^{-1/2}$) and under different regularity assumptions.
\end{enumerate}
\paragraph{Complexity comparison.}
What do we achieve with batching? We will compare batching with no-batching 
by fixing a target accuracy $\varepsilon$ and solving for: (i) the required number of iterations $N(\varepsilon)$ and (ii) the required number of single-sample gradient evaluations
$n(\varepsilon) \;=\; \sum_{i=1}^{N(\varepsilon)} b_i$. 
If $\gamma_i=i^{-\rho}$ and $b_i=i^r$ then 
\begin{align*} 
\left(\mathbb{E}\left[\|\bstheta_N-\bstheta^\ast\|^p\right]\right)^{1/p} \approx \Big(\frac{\gamma_N}{b_N}\Big)^{1-1/p} = \varepsilon\;,
\end{align*}
gives
\begin{align*}
\varepsilon \approx N^{-(\rho+r)(1-1/p)}\;, \ \  N(\varepsilon)\;\approx\;\varepsilon^{-\frac{p}{(p-1)(\rho+r)}}\;, \ \ n(\varepsilon)\;\approx\;\varepsilon^{-\frac{p(r+1)}{(p-1)(\rho+r)}}\;.
\end{align*}
Since the map $r\mapsto E(r,\rho)=\frac{p(r+1)}{(p-1)(\rho+r)}$ is
  strictly decreasing with $r$ if $\rho\in(0,1)$, batching decreases complexity. 
\begin{proof}[Proof of \Cref{th:p_framework_convergence_b>1}]
  For $\mathbf{v}=(v^{(1)},v^{(2)},\dots,v^{(d)})^\top\in \Rset^d$ and $q\geq 0$, we
  define
  \begin{align*}
    v^{\langle q \rangle}=({\rm sign}(v^{(1)})|v^{(1)}|^q,\dots,{\rm sign}(v^{(d)})|v^{(d)}|^q)^\top\;.
  \end{align*}
  We use a technique similar to that of \cite{krasulina:1969} and
  \cite{wang:gurbuzbalanan:shu:simsekli:erdogdu:2021}, but for clarity we present the whole
  proof. Define the function $T_i$ by
  \begin{align*}
    T_i(\bstheta) 
    = \bstheta-\bstheta^* - \gamma_{i+1} H(\bstheta)\;.
  \end{align*}
  Then, by applying \Cref{lemma:p_norm_triangle_inequality} to our stochastic
  approximation scheme, we obtain
  \begin{align*}
    \|\bstheta_{i+1}-\bstheta^*\|^p
    & = \| T_i(\bstheta_i) - \gamma_{i+1} \gradientnoise_{i+1}(\bstheta_i) \|^p \\
    & \leq \| T_i(\bstheta_i)\|^p + 4\gamma_{i+1}^p  \| \gradientnoise_{i+1}(\bstheta_i) \|^p \\
    & \phantom{ = + } + p\gamma_{i+1} \sum_{j=1}^{d} \gradientnoise_{i+1}^{(j)}(\bstheta_i)
      |T_i^{(j)}(\bstheta_i)|^{p-1} \operatorname{sign} T_i^{(j)}(\bstheta_i)\;.
  \end{align*}
  Define $\delta_{i}=\esp[\|\bstheta_i-\bstheta^*\|^p]$. Recall that
  $\mathcal{F}_i=\sigma(\xi_j,j=1,\ldots,\sum_{k=1}^ib_k)$. Hence
  $\mathcal{F}_i=\sigma(\bstheta_j,j\leq i)$. Then by
  \Cref{assumption:gradient_noise} 
  we have
  \begin{align*}
    \esp[\gradientnoise_{i+1}^{(j)}(\bstheta_i) | T_i^{(j)}(\bstheta_i) |^{p-1}
    T_i^{(j)}(\bstheta_i) \mid \mathcal{F}_i] = 0\;,
  \end{align*}
  and by taking expectations we get
  \begin{align*}
    \delta_{i+1} \leq \esp [ \| T_i(\bstheta_i) \|^p ]
    + 4 \gamma_{i+1}^p \esp [ \| \gradientnoise_{i+1}(\bstheta_i) \|^p ]\;.
  \end{align*}
  Write
  \begin{align*}
    T_i(\bstheta)
    & =\bstheta-\bstheta^*-\gamma_{i+1}(H(\bstheta)-H(\bstheta^*)) =\int_0^1(\id-\gamma_{i+1}\nabla H((1-s)\bstheta^*+s\bstheta))
      (\bstheta-\bstheta^*)\rmd s\;.
  \end{align*}
  Next we obtain
  \begin{align*}
    &\delta_{i+1}
     \leq \esp \left[ \left\| \int_0^1(\id-\gamma_{i+1}\nabla H((1-s)\bstheta^*+s\bstheta_i))
      (\bstheta-\bstheta^*)\rmd s \right\|^p \right]
      + 4 \gamma_{i+1}^p \esp\left[ \| \gradientnoise_{i+1}(\bstheta_i) \|^p \right]\\
    & \leq \esp\left[\left(\int_0^1\|(\id-\gamma_{i+1}\nabla H((1-s)\bstheta^*+s\bstheta_i))
      (\bstheta_{i} -  \bstheta^*)\|\rmd s\right)^p\right]
      + 4 \gamma_{i+1}^p \esp [ \| \gradientnoise_{i+1}(\bstheta_i) \|^p ]\;.
 \end{align*}
 Then, by \Cref{assumption:function_H} and \Cref{lemma:function_H}, there exists
   $L$ such that for $i$ sufficiently large (for a decreasing learning rate) or for all
 $i$ (for a constant learning rate) we obtain
\begin{align*}
  \left(\int_0^1\vvvert\id-\gamma_{i+1}\nabla H((1-s)\bstheta^*+s\bstheta_i)\vvvert\rmd s\right)^p
  & \leq \sup_{\bstheta\in\Theta}\vvvert \id- \gamma_{i+1} \nabla H(\bstheta)\vvvert^p  \leq 1- L\gamma_{i+1}\;.
\end{align*}
Using \Cref{assumption:gradient_noise} and combining all the previous bounds with
$C_1=4C_0$ we get the following iterative bound
\begin{align*}
  \delta_{i+1}  &\leq (1 - L \gamma_{i+1})   \delta_i
                  +   C_1\frac{\gamma_{i+1}^{p-1}}{b_{i+1}^{p-1}} \gamma_{i+1}\;.
\end{align*}
To conclude, we use the item of \Cref{assumption:learning_rate_general} to apply
  \Cref{lem:bound-recursion-toeplitz} with $u_i=C_1(\gamma_{i}/b_{i})^{p-1}$,
  $v_i=1-L\gamma_{i+1}$ and $z_i=\gamma_i$.
\end{proof}


\section{Convergence of the algorithm}
\label{sec:convergence}

We must strengthen the assumptions of \Cref{th:p_framework_convergence_b>1}.
\begin{assumption}
  \label{assumption:regvar-GNZ}
  The random variables $\zeta_j,j\geq1$ defined by
  \begin{align*}
    \GNZ_j=\nabla\psi(\bstheta^*,\xi_j)-\esp[\nabla\psi(\bstheta^*,\xi_0)], \ \ j\geq 0
  \end{align*} 
  are \iid~regularly varying random vectors with tail index $\alpha\in(1,2)$ and exponent
  measure $\nu$.
  
\end{assumption}

\begin{assumption}
  \label{assumption:gradient_noise_new_as}
  The $\Rset^d$-valued stochastic processes
  $(\gradientnoise_i(\bstheta),\bstheta\in\Theta),i\geq 1$ are independent. For every
  $p\in(1,\alpha)$, there exists a constant $C_p>0$ such that for all $\bstheta\in\Theta$
  and $i\geq 1$,
  \begin{align*}
    \esp[\|\gradientnoise_{i}(\bstheta)-e_{i}(\bstheta^*)\|^p]
    \leq \frac{C_p}{b_i^{p-1}} \|\bstheta-\bstheta^*\|^p\;.
  \end{align*}

\end{assumption}

For the gradient noise as in \Cref{def:gradient-noise-sequence}, by Marcinkiewicz–Zygmund
inequality,
\begin{align*}
  \esp[\|\gradientnoise_{i}(\bstheta)-e_{i}(\bstheta^*)\|^p]
  \leq \frac{C}{b_i^{p-1}}
  \esp[\|\nabla\psi(\bstheta,\xi_0)-\nabla\psi(\bstheta^*,\xi_0)\|^p] \; .
\end{align*}
Thus \Cref{assumption:gradient_noise_new_as} holds if
\begin{align*}
  \esp[\|\nabla\psi(\bstheta,\xi_0)-\nabla\psi(\bstheta^*,\xi_0)\|^p]
  \leq C' \|\bstheta-\bstheta^*\|^p\; .
\end{align*}
In particular, it suffices that  \begin{align}
\label{assumption:gradient_noise_new_as-toverify}
    \|\nabla\psi(\bstheta,\xi_0)-\nabla\psi(\bstheta^*,\xi_0)\| \leq C(\xi_0)\|\bstheta-\bstheta^*\|
  \end{align}
  with $\esp[C(\xi_0)^p]<\infty$.

Note also that \Cref{assumption:regvar-GNZ,assumption:gradient_noise_new_as} imply
\Cref{assumption:gradient_noise}.

\begin{assumption} 
  \label{assumption:function_H_taylor_exp}
  For every $q\in(1,\alpha)$, there exists a constant $C_q$ such that, for all
  $\bstheta\in\Theta$,
  \begin{align*}
    \|H(\bstheta)-H(\bstheta^*)-\nabla H(\bstheta^*)\left(\bstheta-\bstheta^*\right)\|
    \leq C_q\|\bstheta-\bstheta^*\|^q\;.
  \end{align*}
\end{assumption}

\begin{assumption}
  \label{assumption:learning-rate-rv}
  The learning rate $\{\gamma_i\}_{i\geq 1}$ is non-increasing and regularly varying with
  index $-\rho$, $\rho\in [0,1)$ and the batch size $\{b_i\}_{i\geq1}$ is non-decreasing
  and regularly varying with index $r\geq0$, $\lim_{i\to\infty}\gamma_i/b_i=0$ and the
  sequence $ \{(\gamma_{i}/b_{i})^{p-1}\prod_{j=1}^i(1-\gamma_j)^{-1}\}_{i\geq1}$ is
  ultimately non-decreasing.
\end{assumption}
This assumption is a mild strengthening of \Cref{assumption:learning_rate_general}. We
only assume additionally that the sequence $\{b_i\}_{i\geq 1}$ is regularly varying to make certain
computations simpler.

Let $F_0$ be the cdf of $\|\GNZ_0\|$ and $\quantilefunction{F}[0]$ its left-continuous
inverse. Under assumption \Cref{assumption:regvar-GNZ}, the map
$x\mapsto\quantilefunction{F}[0](1-1/x)$ is regularly varying at infinity with index
$1/\alpha$.

Define the sequence $\{w_i\}_{i\geq 0}$ by $w_0=1$ and
\begin{align*}
  w_i=\frac{b_i}{\gamma_i\quantilefunction{F}[0]\left(1-\frac{\gamma_i}{b_i}\right)}\;.
\end{align*}
Under \Cref{assumption:regvar-GNZ,assumption:learning-rate-rv}, $w_i$ is regularly
varying with index $(1-1/\alpha)(r+\rho)$.  To deal with some remainder terms, we need to
impose a kind of regularity condition which is satisfied for most sequences $\{\gamma_i\}_{i\geq 1}$ and
$\{b_i\}_{i\geq 1}$ and regularly varying function $\overline{F}_0$ (see the discussion below \Cref{th:stable_law_convergence}): 
\begin{align}
  \label{eq:diffw}
  w_{i+1}-w_i = \bigo\left(\frac{w_i}i\right) \; .
\end{align}

\begin{theorem}
  \label{th:stable_law_convergence}
  Let
  \Cref{assumption:function_H,
    assumption:regvar-GNZ,assumption:gradient_noise_new_as,%
    assumption:function_H_taylor_exp,%
    assumption:learning-rate-rv} hold with $\gamma_1 L<1$. Assume also that
  \Cref{eq:diffw} holds. Then, as $N\to\infty$,
  \begin{align*}
    w_N(\bstheta_N-\bstheta^*)\Rightarrow  Z_{\infty} \; , 
  \end{align*}
  with $Z_{\infty}$ a stable random vector with characteristic function given by
  \begin{subequations}
    \begin{align}
      \label{eq:chf}
      \esp\left[\exp(iu^\top Z_\infty)\right]
      &= \exp\left(\int_{\Rset^d\setminus\{0\}}\Big(e^{iu^\top x}-1-iu^\top x\Big)
        \,\widetilde{\nu}(\rmd x)\right), \ \ 
      \\
      \label{eq:nutilde-measure}
      \widetilde{\nu} & =\int_{t=0}^{\infty}\nu(e^{t\,\nabla H(\bstheta^*)}\cdot)\,\rmd t \; ,
    \end{align}
  \end{subequations}
  and $\nu$ is the exponent measure of  $\GNZ_0$.

\end{theorem}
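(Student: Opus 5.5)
We write $\Delta_i=\bstheta_i-\bstheta^*$ and $A=\nabla H(\bstheta^*)$, and linearize the recursion \eqref{eq:sgd_algorithm}. Since $H(\bstheta^*)=\bszero$,
\begin{align*}
  \Delta_{i+1}=(\id-\gamma_{i+1}A)\Delta_i-\gamma_{i+1}\gradientnoise_{i+1}(\bstheta^*)
  -\gamma_{i+1}R_{i+1}\;,
\end{align*}
where
\begin{align*}
  R_{i+1}=\{H(\bstheta_i)-A\Delta_i\}+\{\gradientnoise_{i+1}(\bstheta_i)-\gradientnoise_{i+1}(\bstheta^*)\}\;.
\end{align*}
Iterating with $\Pi_{k,N}=\prod_{j=k+1}^N(\id-\gamma_jA)$ gives
\begin{align*}
  w_N\Delta_N=w_N\Pi_{0,N}\Delta_0-w_N\sum_{k=1}^N\gamma_k\Pi_{k,N}\gradientnoise_k(\bstheta^*)
  -w_N\sum_{k=1}^N\gamma_k\Pi_{k,N}R_k\;.
\end{align*}
The key point is that $\gradientnoise_k(\bstheta^*)=b_k^{-1}\sum_{j\in\Omega_k}\GNZ_j$. ``De-batching'' therefore writes the main term as a single weighted sum of the \iid\ regularly varying vectors $\GNZ_j$, $j=1,\dots,n$:
\begin{align*}
  S_N=-\sum_{k=1}^N\sum_{j\in\Omega_k}X_{j,N},\qquad
  X_{j,N}=\frac{w_N\gamma_k}{b_k}\Pi_{k,N}\GNZ_j\quad (j\in\Omega_k)\;.
\end{align*}
The $\GNZ_j$ are centered, so I will apply \Cref{theo:triangular_convergence} to this triangular array (the sign is immaterial once checked, and it is absorbed by the symmetry of the integrand after a change of variables; I will simply verify the limit measure directly for $-X_{j,N}$).

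\textbf{Step 1 (main term).} I check \eqref{eq:convvaguetriangular}. By regular variation and the definition of $w$, for a set $B$ separated from zero,
\begin{align*}
  \pr(X_{j,N}\in B)\approx \frac{\gamma_k}{b_k}\,\frac{\quantilefunction{F}[0](1-\gamma_N/b_N)^{\alpha}\ell}{\quantilefunction{F}[0](1-\gamma_k/b_k)^{\alpha}\ell}\ \nu\Big(\frac{w_k}{w_N}\Pi_{k,N}^{-1}B\Big)\cdot b_k\;,
\end{align*}
after summing over the $b_k$ indices $j\in\Omega_k$. More precisely, $b_k\pr(\|\GNZ_0\|>\quantilefunction{F}[0](1-\gamma_k/b_k)x)\sim\gamma_k x^{-\alpha}$. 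Hence the sum over $k$ is approximately
\begin{align*}
  \sum_k\gamma_k\,\nu\big((w_k/w_N)^{-1}\Pi_{k,N}^{-1}B\big)\;.
\end{align*}
Let $t=\sum_{j=k+1}^N\gamma_j$. Then $\Pi_{k,N}\approx e^{-tA}$, and by \eqref{eq:diffw} $w_N/w_k\to1$ uniformly over $t\le T$. The Riemann sum therefore converges to $\int_0^\infty\nu(e^{tA}B)\,\rmd t=\widetilde\nu(B)$. The tail $t>T$ is controlled by $\vvvert\Pi_{k,N}\vvvert\le e^{-at}$ (from \Cref{assumption:function_H} and \Cref{lemma:function_H}) together with Potter bounds for $w_k/w_N$. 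The exponential decay beats the polynomial growth of $w_N/w_k$, since $\gamma_k$ is regularly varying with $\rho<1$; here I use $\sum\gamma_j=\infty$ and $\gamma_1L<1$ for the product bound.

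The null-array condition \eqref{eq:nullarray} holds because each single term has probability $\lesssim \gamma_k/b_k\to0$ uniformly. For \eqref{eq:ANSJ}, which I read as a summed condition, I use Karamata: $\esp[\|\GNZ\|^2\ind{\|\GNZ\|\le x}]\sim c\,x^2\pr(\|\GNZ\|>x)$. This gives a bound $\epsilon^{2-\alpha}$ times the same convergent Riemann sum, which vanishes as $\epsilon\to0$. The deterministic term $w_N\Pi_{0,N}\Delta_0$ tends to $0$ because it decays like $e^{-a\sum\gamma_j}$ while $w_N$ grows only regularly.

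\textbf{Step 2 (remainder), the main obstacle.} I will show $w_N\sum_k\gamma_k\Pi_{k,N}R_k\to0$ in $L^p$ or $L^1$ for $p<\alpha$ close to $\alpha$.

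For the martingale part $\gradientnoise_k(\bstheta_{k-1})-\gradientnoise_k(\bstheta^*)$, I apply a von Bahr--Esseen (Burkholder) inequality. Combined with \Cref{assumption:gradient_noise_new_as} and \Cref{th:p_framework_convergence_b>1}, this bounds its $p$-th moment by
\begin{align*}
  w_N^p\sum_k\gamma_k^p e^{-pa t_k}\,b_k^{1-p}\,(\gamma_k/b_k)^{p-1}\;.
\end{align*}
This is of order $w_N^p(\gamma_N/b_N)^{2(p-1)}$. Since $w_N^p\approx(\gamma_N/b_N)^{-p(1-1/\alpha)}$ up to slowly varying factors, the bound tends to zero when $2(p-1)>p(1-1/\alpha)$, which holds for $p$ near $\alpha$.

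For the Taylor part I use \Cref{assumption:function_H_taylor_exp} with $q$ close to $\alpha$, giving $\esp\|\Delta_k\|^q\lesssim(\gamma_k/b_k)^{q-1}$. Then
\begin{align*}
  w_N\sum_k\gamma_ke^{-at}\esp\|\Delta_k\|^q\approx w_N(\gamma_N/b_N)^{q-1}\to0\;,
\end{align*}
provided $q-1>1-1/\alpha$. This again holds for $q$ near $\alpha$, because $\alpha-1>1-1/\alpha$ is equivalent to $(\alpha-1)^2>0$.

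I expect the delicate point to be this exponent bookkeeping, together with the fact that the moment bound holds only for $p<\alpha$ with constants depending on $p$. The Toeplitz-type sums also have to be controlled uniformly, via \Cref{lem:bound-recursion-toeplitz}. Combining the two steps by Slutsky yields the theorem.
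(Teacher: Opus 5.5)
\textbf{Comparison with the paper's route.} Your argument is correct in substance but organized differently from the paper's.

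The paper puts the normalization inside the recursion. It compares $X_N=w_N(\bstheta_N-\bstheta^*)$ with $\widetilde X^\dag_N$, in which block $i$ carries the weight $w_i\gamma_i$. Hence $W_{N,i}=A_{N,i}/F_0^{\leftarrow}(1-\gamma_i/b_i)$ with $\vvvert A_{N,i}\vvvert\le 1$, and each block contributes mass $\kappa_i\sim\gamma_i$. The price is the extra remainder $R^{(3)}_{N+1}=(w_{N+1}-w_N)(\id-\gamma_{N+1}A)(\bstheta_N-\bstheta^*)$ in \Cref{lem:X_n_and_X_n_tilda_convergence}, which is the only place \eqref{eq:diffw} is used. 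The mean measure is then identified through the time change $G$, and the remainders through recursive moment inequalities and \Cref{lem:bound-recursion-toeplitz}.

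You keep $w_N$ outside. So block $k$ contributes $\approx\gamma_k(w_N/w_k)^{\alpha}\nu(\Pi_{k,N}^{-1}B)$, and you read off the limit from a Riemann sum in $t_k=\sum_{j>k}\gamma_j$ (mesh $\gamma_k$). You bound the remainder sums directly, by von Bahr--Esseen for the martingale part and first moments for the Taylor part. This leads to essentially the same exponent conditions: $p>2\alpha/(\alpha+1)$ and $q>2-1/\alpha$.

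What your route buys is that there is no $R^{(3)}$ term at all. You only need $w_N/w_k\to1$ uniformly over $N-k=\bigo(1/\gamma_N)=\smallo(N)$, and this follows from regular variation of $\{w_i\}$ and the uniform convergence theorem. So \eqref{eq:diffw} is actually superfluous in your argument; you cite it but do not need it.

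The cost is that your weights are no longer dominated by $1/F_0^{\leftarrow}(1-\gamma_k/b_k)$. For the null-array condition, the tail $t_k>T$ and the small-jump condition you need two facts:
\begin{itemize}
\item $\sup_{k\le N}(w_N/w_k)\vvvert\Pi_{k,N}\vvvert=\bigo(1)$;
\item $\sum_{t_k>T}\gamma_k(w_N/w_k)^{\alpha+\eta}\rme^{-(\alpha-\eta)at_k}$ is small for large $T$.
\end{itemize}
Both hold because $t_k\ge(N-k)\gamma_N$ and $N\gamma_N$ grows polynomially, so for $k\le N/2$ the exponential factor kills the Potter bound on $w_N/w_k$. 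Your one-line null-array justification (``probability $\lesssim\gamma_k/b_k$'') silently relies on this and should say so.

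\textbf{The sign.} Your remark that the minus sign in front of $\sum_jX_{j,N}$ is immaterial is wrong. The integrand $\rme^{iu^\top x}-1-iu^\top x$ is not invariant under $x\mapsto-x$. Checking the mean measure for $-X_{j,N}$ gives $\sum_j\pr(-X_{j,N}\in\cdot)\convvague\widetilde\nu(-\,\cdot)=\int_0^\infty\nu(-\rme^{tA}\,\cdot)\,\rmd t$. So what your argument proves is $w_N(\bstheta_N-\bstheta^*)\Rightarrow-Z_\infty$, which has the stated law only when $\nu$ is symmetric.

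For instance, take $\nabla\psi(\theta;\xi)=\theta-\xi$ in dimension one. A heavy right tail of $\xi$ produces a heavy right tail of $\bstheta_N$, whereas $\nu$ and $\widetilde\nu$ then live on $(-\infty,0)$.

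The paper's proof contains the same slip: the minus sign in the recursion defining $\widetilde X^\dag_N$ disappears in \eqref{eq:X_n_tilda_def-dag}. The statement should therefore be read with $\nu$ the exponent measure of $-\GNZ_0$. You should say this explicitly rather than argue the sign away.
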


\paragraph{Comments:}
\begin{enumerate}
\item 
  If the batch size is constant, we recover the rate in \cite[Thm
  4.1]{blanchet:2024}.
\item The distribution of $Z_{\infty}$ is the stationary distribution of an
  Ornstein-Uhlenbeck process driven by a L\'{e}vy stable process $L$ such that the
  distribution of $L(1)$ has the characteristic function
  \begin{align*}
    \exp    \int_{\mathbb{R}^d\setminus\{0\}}
    \left\{e^{iu^\top x}-1-iu^\top x \right\} \nu(\rmd x)
    \; .
  \end{align*}
 That means
 \begin{align*}
   Z_{\infty}\eqdistr \int_0^\infty e^{-s A}\rmd L(s)\;.
 \end{align*}
 The characteristic function of the limiting random variable is given in
 \cite{blanchet:2024} in the following form:
 \begin{align}
   \label{eq:chf-2}
   \exp\left(iu^\top\widetilde\gamma +
   \int_{\mathbb{R}^d\setminus\{0\}}
   \Big(e^{iu^\top x}-1-iu^\top x 1\{\|x\|\leq 1\}\Big)\,\widetilde{\nu}(\rmd x) \right)
 \end{align}
 with
 \begin{align}
   \label{eq:drift}
   \tilde{\gamma}
   & = -A^{-1}\int_{\|x\|>1} x\,\nu(\rmd x) \nonumber\\
   & \quad + \int_{ 0}^\infty e^{-A t} x
     \left(\ind{\|\exp(-A t)x\|\le 1} - \ind{\|x\|\le 1)}\right)
     \nu(\rmd x)\,\rmd t \; .
 \end{align}
 We show in \Cref{lem:drift} that
 $\widetilde{\gamma}=\int_{\|x\|>1} x\widetilde{\nu}(\rmd x)$, which implies that the
 characteristic functions in \Cref{eq:chf} and \Cref{eq:chf-2} are equal.

\item \Cref{assumption:regvar-GNZ} is very similar to \cite[Assumption
  2.5]{blanchet:2024} {but we only assume the} regular variation of
  $\nabla\psi(\bstheta^*,\xi_j)$.
\item \Cref{assumption:gradient_noise_new_as} is in the spirit of \cite[Assumption
  2.7]{blanchet:2024}, with batching added. Note however that while the latter article
  requires an almost sure bound, we require a moment bound only. This has some
  consequences. For example, when considering linear regression \cite[Assumption
  2.7]{blanchet:2024} implies that the predictors have to be bounded almost surely. {In our case, 
  \eqref{assumption:gradient_noise_new_as-toverify} holds whenever the predictors have finite moment of order $2p$. See \Cref{xmpl:linear-regression}.} 
\item \Cref{assumption:function_H_taylor_exp} is \cite[Assumption
  2.8]{blanchet:2024}. It holds if the function $H$ is $C^2$, but could not be used
    with $q=2$ under the present heavy tail framework. 
\item We comment on  \eqref{eq:diffw}. It is satisfied for a variety of regularly varying distributions $F$, and regularly varying sequences $\{b_i\}_{i\geq 1}$ and $\{\gamma_i\}_{i\geq 1}$. To give a concrete example,  if 
    \begin{align*}
    \overline{F_0}(x)=1-F_0(x) & = \frac{1}{x^\alpha \ell_0(x^\alpha)}
    \end{align*}
    then 
    \begin{align*}
    \quantilefunction{F}[0](1-1/x) & = \left(x\ell_0^\#(x)\right)^{1/\alpha} \; ,
  \end{align*}
  where $\ell_0^\#$ is a de Bruijn conjugate of $\ell_0$. 
  Choose for example 
$\ell_0(x)=\log^{\kappa}(x)$, $\kappa\in\Rset$. Then 
$\ell^{\#}(x)=1/\log^\kappa(x)$ and 
$$
  w_i=
  \left(\frac{b_i}{\gamma_i}\right)^{1-1/\alpha}\frac{1}{(\ell^{\#}(b_i/\gamma_i))^{1/\alpha}}=
  \left(\frac{b_i}{\gamma_i}\right)^{1-1/\alpha}
  (\log(b_i)-\log(\gamma_i))^{\kappa/\alpha}\;.
    $$
  Choose $b_i=i^r$, $r\geq 0$ and $\gamma_i=i^{-\rho}$, $\rho\in (0,1)$. Then we can write
 $$
 w_i=C i^\beta\log^{\eta}(i)
 $$
 with $C,\beta,\eta>0$. Then 
 $$
 \lim_{i\to\infty}i\frac{w_{i+1}-w_i}{w_i}=\beta
 $$
 and hence  \eqref{eq:diffw} is satisfied.

\item We exclude the case $\rho=1$, which requires separate analysis, because of some bias
  terms which are non negligible in that case. See \citet[Corollary 4.3]{blanchet:2024}.
\end{enumerate}
\paragraph{Complexity comparison.}
Similarly to the case of $L^p$ bounds, we assume that the length of the confidence interval, $1/w_N$, is $\varepsilon$:
\begin{align*}
1/w_N \approx \Big(\frac{\gamma_N}{b_N}\Big)^{1-1/\alpha} = \varepsilon\;.
\end{align*}
Then, by the same computations as in case of $L^p$ bounds, 
$n(\varepsilon)=\varepsilon^{-\frac{\alpha(r+1)}{(\alpha-1)(\rho+r)}}$; 
the batching decreases complexity. \\

We now prove \Cref{th:stable_law_convergence}.  Our proof is different from that of
\citet{blanchet:2024}, which does not consider batching and relies on an embedding
technique borrowed from the finite variance case (see \cite{pelletier:1998}). The
  most delicate part of the proof of \cite{blanchet:2024} is the proof of the tightness of
  $w_N(\bstheta_N-\bstheta^*)$. We actually found it easier to directly prove the
  convergence than the tightness.  To this end, we approximate
  $w_N(\bstheta_N-\bstheta^*)$ by a weighted sum of independent, regularly varying random
  variables and apply \Cref{theo:triangular_convergence}. The most technical details of
the proof are postponed to \Cref{sec:appendix-technical-details-convergence}.
\begin{proof}[Proof of \Cref{th:stable_law_convergence}]
  Recall that by definition, we have 
  \begin{align*}
    \bstheta_{i+1}=\bstheta_i-\gamma_{i+1}H(\bstheta_i)
    -\gamma_{i+1}\gradientnoise_{i+1}(\bstheta_i)\;.
  \end{align*}
  Let $A=\nabla H(\bstheta^*)$. A first order Taylor expansion of $H$ around
  $\bstheta^*$ yields
  \begin{align}
    \nonumber
    &\bstheta_{i+1}-\bstheta^*
    =(\id-\gamma_{i+1}A)(\bstheta_i-\bstheta^*)
      - \gamma_{i+1}\gradientnoise_{i+1}(\bstheta_i)+\gamma_{i+1}R^{(1)}_{i+1} \\
    \label{eq:two_rests}
    & = (\id-\gamma_{i+1}A)(\bstheta_i-\bstheta^*)
      -\gamma_{i+1}\gradientnoise_{i+1}(\bstheta^*)+\gamma_{i+1}R^{(1)}_{i+1}+\gamma_{i+1}R_{i+1}^{(2)}\;,
  \end{align}
  where
  \begin{subequations}
    \begin{align}
      \label{eq:Rn1}
      R_{i+1}^{(1)} & =  H(\bstheta_i)-H(\bstheta^*)
                      -\nabla H(\bstheta^*)\left(\bstheta_i-\bstheta^*\right) \; , \\
      \label{eq:Rn2}
      R_{i+1}^{(2)} & = \gradientnoise_{i+1}(\bstheta^*)-\gradientnoise_{i+1}(\bstheta_i)\;,
    \end{align}
  \end{subequations}
  Next, we define
  \begin{align}
    \label{eq:X_n_def}
    X_N=w_N(\bstheta_N-\bstheta^*)\;.
  \end{align}
  Therefore, by \eqref{eq:two_rests}
  \begin{align*}
    X_{N+1} = \frac{w_{N+1}}{w_N}(\id-\gamma_{N+1}A)X_N
    & - w_{N+1}\gamma_{N+1}\gradientnoise_{N+1}(\bstheta^*)   + w_{N+1}\gamma_{N+1}R^{(1)}_{N+1}+w_{N+1}\gamma_{N+1}R_{N+1}^{(2)}\;.
  \end{align*}
  Let $\widetilde{X}_N^\dag$ to be the solution of the modified recursion 
  \begin{align*}
    \widetilde{X}_{0}^\dag & = \bstheta_0-\bstheta^*,\\
    \widetilde{X}_{N+1}^\dag & = (\id-\gamma_{N+1}A)\widetilde{X}_N^\dag
                               -\gamma_{N+1}w_{N+1}\gradientnoise_{N+1}(\bstheta^*)\;.
  \end{align*}
  Consequently,
  \begin{align}
    \label{eq:X_n_tilda_def-dag}
    \widetilde{X}_{N}^\dag
    = \sum_{i=1}^{N}\left(\prod_{k=i+1}^{N}(\id-A\gamma_{k})\right)  w_i\gamma_i
    \gradientnoise_i(\bstheta^*)
    + \prod_{i=1}^{N}(\id-A\gamma_{i})(\bstheta_0-\bstheta^*)\;.
  \end{align}
  Define finally
  \begin{align}
    \label{eq:X_n_tilda_def}
    \widetilde{X}_N=\sum_{i=1}^{N}\left(\prod_{k=i+1}^{N}(\id-A\gamma_{k})\right)
    w_i\gamma_i 
    \gradientnoise_i(\bstheta^*)\;.
  \end{align}
  By \Cref{assumption:function_H} and \Cref{lemma:function_H} and since we assume that the sequence $\{\gamma_i\}_{i\geq 1}$ is
  non-increasing and $\gamma_1L<1$, we have
    \begin{align*}
      \left\|    \widetilde{X}_{N}^\dag - \widetilde{X}_N\right\|^p
      & \leq \prod_{i=1}^{N}\| (\id-\gamma_{i}A)(\bstheta_0-\bstheta^*)\|^p \\
      & \leq \|\bstheta_0-\bstheta^*\|^p\prod_{i=1}^{N}\vvvert (\id-\gamma_{j}A)\vvvert^p  
        \leq \|\bstheta_0-\bstheta^*\|^p\prod_{i=1}^{N}(1-L\gamma_{i}) \; .
    \end{align*}  
  Then,  using $\log(1-x)\leq -x$ for $x\in(0,1)$ and since $\gamma_i$ is regularly
  varying with index $\rho<1$, we obtain
  \begin{align*}
    \log\left(\prod_{i=1}^{N}(1-L\gamma_{i})\right)
    = \sum_{i=1}^{N}\log\left(1-L\gamma_{i}\right)
    \leq -L\sum_{i=1}^{N}\gamma_i\to\-\infty, \text{ as } N\to\infty\;.
  \end{align*}
  Thus, $\lim_{N\to\infty}\prod_{i=1}^{N}(1-L\gamma_{i})=0$ and
  $ \widetilde{X}_{N}^\dag - \widetilde{X}_N=o_P(1)$.
  \Cref{lem:X_n_and_X_n_tilda_convergence} shows that
  $X_N-\widetilde{X}_N=o_P(1)$, thus there only remains to prove that $\widetilde{X}_N$
  converges to the stated limit.
 
  We write $\widetilde{X}_N$ as a weighted sum of the random variables $\GNZ_j$ from
  \Cref{assumption:regvar-GNZ}. Set
  \begin{subequations}
    \begin{align}
      \pi_0 & = 0 \; ,  \ \       \pi_i  = \sum_{j=1}^ib_j \; , \ \ i\geq 1 \; ,
              \label{eq:debatching_1a} \\
      k_i & = \sup\{k:\pi_k< i\} \; . \label{eq:debatching_1b}
    \end{align}
  \end{subequations}
  This entails that for $q\geq1$,
  \begin{align*}
    k_i=q \iff \pi_{q-1}< i\leq \pi_q \; .
  \end{align*}
  Define also the deterministic matrices
  \begin{subequations}
    \begin{align}
      A_{N,i} & = \prod_{k=i+1}^{N}(\id-\gamma_{k}A), \ \ i=1,\ldots,N-1, \ \
                A_{N,N}=I\;, \label{eq:w_n_k_definition}
      \\
      W_{N,i} &  = \frac{1}{\quantilefunction{F}[0]\left(1-\frac{\gamma_i}{b_i}\right)}A_{N,i}, \ \
                i=1,\ldots,N\;.\label{eq:W_n_k_definition}
    \end{align}
  \end{subequations}
  Then,
  \begin{align}
    \label{eq:Xtilde}
    \widetilde{X}_N
    = \sum_{i=1}^NW_{N,i}b_i\gradientnoise_i(\bstheta^*)
    = \sum_{j=1}^{\pi_N}W_{N,k_j}\GNZ_{j}\;.
  \end{align}
  Hence, $\widetilde{X}_N$ is a weighted sum of $\pi_N$ independent regularly varying
  random variables. Its convergence will be proved by applying
  \Cref{theo:triangular_convergence}. This means that we must check conditions
  \Cref{eq:convvaguetriangular}, \Cref{eq:nullarray} and \Cref{eq:ANSJ} which in the
  present context become
  \begin{subequations}
    \begin{align}
      \label{eq:meanmeasure-convergence-SGD}
      \sum_{j=1}^{\pi_N}\pr(W_{N,k_j}\GNZ_0\in  \cdot)
      & = 
        \sum_{i=1}^{N}b_i\pr(W_{N,i}\GNZ_0\in  \cdot) \convvague \widetilde{\nu} \; , \\
      \label{eq:nullarray-convergence-SGD}
      \lim_{N\to\infty}\max_{1\leq i\leq N} & \pr(\|W_{N,i}\GNZ_0\|>\epsilon)= 0 \; , \\
      \label{eq:ANSJ-convergence-SGD}
      \lim_{\epsilon\to 0}\limsup_{N\to\infty}
      & \sum_{j=1}^{\pi_N}
        \esp[\|W_{N,k_j}\GNZ_j\|^2\ind{\|W_{N,k_j}\GNZ_j\|\leq\epsilon}]= 0 \; , 
    \end{align}
  \end{subequations}
  where $\widetilde{\nu}$ is defined in \eqref{eq:nutilde-measure}. 
\end{proof}
The main task is to prove \Cref{eq:meanmeasure-convergence-SGD}, which we do now.
\begin{proof}[Proof of \Cref{eq:meanmeasure-convergence-SGD}]
  Define the measures
  \begin{align*}
    \nu_i=\frac{\pr(\GNZ_0\in \quantilefunction{F}[0](1-\gamma_i/b_i)\cdot)}
    {\pr(\|\GNZ_0\|>\quantilefunction{F}[0](1-\gamma_i/b_i))}\;.
  \end{align*}
  Define also
  \begin{align}
    \label{eq:def-kappai}
    \kappa_i = b_i      \pr( \|\GNZ_0\| > \quantilefunction{F}[0](1-\gamma_i/b_i) ) \; .
  \end{align}
  If $F_0$ is continuous, then $\kappa_i=\gamma_i$. We do not make this assumption.
   Then for every set $S$
  separated from zero, we must prove that 
  \begin{align}
    \label{eq:mean-measure-d>1-one}
    \mu_N(S) & = \sum_{i=1}^{N}b_i \pr(W_{N,i}\GNZ_0\in S) 
               = \sum_{i=1}^{N}\kappa_i\nu_i(W_{N,i}^{-1} S)  \to  \widetilde{\nu}(S) \; .
  \end{align}
  The key to proving this convergence is to realize that only the last few terms of the
  sum contribute to the limit.  Fix $K>0$ and $K_N=K/\gamma_N$. We will prove that
  \begin{subequations}
    \begin{align}
      \label{eq:lim_lim_sup_K}
      \lim_{K\to\infty}\limsup_{N\to\infty}
      & \sum_{i=1}^{N-K_N}b_i\pr(W_{N,i}\GNZ_0\in S)=0 \; ,  \\
      \label{eq:mean-measure>K}
      \lim_{N\to\infty}
      & \sum_{i=N-K_N}^{N}\kappa_i\nu_i(W_{N,i}^{-1}S)=\int_0^K \nu(e^{tA}S)\rmd t\;.
    \end{align}
  \end{subequations}
  Combining \eqref{eq:lim_lim_sup_K} and \eqref{eq:mean-measure>K} and letting
  $K\to\infty$ yields \eqref{eq:mean-measure-d>1-one}.

  Write
  \begin{align*}
    r_n(S) &  = \sum_{i=1}^{N-K_N}b_i\pr(W_{N,i}\GNZ_0\in S) \; .
  \end{align*}
  and for $\sigma>0$, define
  \begin{align}
    \label{def:wsigma}
    w_{N,i}(\sigma)=\prod_{k=i+1}^{N}(1-\sigma\gamma_k) \; .
  \end{align}
  Since $S$ is separated from $0$ and $A$ is positive definite (by
  \Cref{assumption:function_H}), there exist $\sigma,\epsilon>0$ such that
  \begin{align*}
   r_n(S) 
    &\le \sum_{i=1}^{N-K_N} b_i
      \pr( w_{N,i}(\sigma)\|\GNZ_0\| > \quantilefunction{F}[0](1-\gamma_i/b_i) \epsilon) \; .
  \end{align*}
  For each fixed $i$, $w_{N,i}(\sigma)\to0$, thus for any $i_0\geq1$,
  \begin{align*}
    \lim_{N\to\infty}  \sum_{i=1}^{i_0} b_i
      \pr( w_{N,i}(\sigma)\|\GNZ_0\| > \quantilefunction{F}[0](1-\gamma_i/b_i) \epsilon) = 0 \; .
  \end{align*}
  Thus we can truncate the sum $r_n(S)$ at any arbitrary $i_0$.   By
  regular variation, there exists a slowly varying function $\ell_0$ such that
  \begin{align*}
    \overline{F_0}(x)=1-F_0(x)  = \frac{1}{x^\alpha \ell_0(x^\alpha)} \; , \ \ 
    \quantilefunction{F}[0](1-1/x)  = \left(x\ell_0^\#(x)\right)^{1/\alpha} \; ,
  \end{align*}
  where $\ell_0^\#$ is a de Bruijn conjugate of $\ell_0$, which satisfies
  \begin{align}
    \label{eq:carac-debruijn}
    \lim_{x\to\infty} \ell_0(x) \ell_0^\#(x \ell_0(x))=
    \lim_{x\to\infty} \ell_0^\#(x) \ell_0(x \ell_0^\#(x))= 1 \; .
  \end{align}
  Cf. \citet[Proposition~1.5.15]{bingham:goldie:teugels:1989}. Thus, as $i\to\infty$,
  \begin{align*}
    \frac{\kappa_i}{\gamma_i}
    &  = \frac{b_i}{\gamma_i}    \overline{F_0}(\quantilefunction{F}[0](1-\gamma_i/b_i) )  = \frac1{\ell_0^\#(b_i/\gamma_i)\ell_0((b_i/\gamma_i)\ell_0^\#(b_i/\gamma_i)} \to 1 \;  .
  \end{align*}  
  Fix $i_0\geq1$ and set
  \begin{align*}
    \tilde{r}_n(S) 
    & = \sum_{i=i_0+1}^{N-K_N} b_i
      \pr( w_{N,i}(\sigma)\|\GNZ_0\| > \quantilefunction{F}[0](1-\gamma_i/b_i) \epsilon) \\
    & =  \sum_{i=i_0+1}^{N-K_N} \kappa_i
      \frac{\pr( w_{N,i}(\sigma)\|\GNZ_0\| > \quantilefunction{F}[0](1-\gamma_i/b_i) \epsilon)}
      {\pr( \|\GNZ_0\| > \quantilefunction{F}[0](1-\gamma_i/b_i) )} \; .
  \end{align*}  
  Fix $\eta\in(0,\alpha)$ and a constant $C>1$. Since $b_i/\gamma_i\to\infty$ and
  $w_{N,i}(\sigma)\leq1$, by Potter's theorem
  (cf. \citet[Theorem~1.5.6]{bingham:goldie:teugels:1989}), we can choose $i_0$ such that
  for all $i\geq i_0$,
  \begin{align*}
    \frac{\pr(w_{N,i}(\sigma)\|\GNZ_0\|>\quantilefunction{F}[0](1-\gamma_i/b_i)\epsilon)}
    {\pr(\|\GNZ_0\|>\quantilefunction{F}[0](1-\gamma_i/b_i))} \leq C w_{N,i}^{\alpha-\eta}(\sigma) \; ,
  \end{align*}
  where the constant $C$ depends on $\epsilon$.  Thus
  \begin{align*}
    \tilde{r}_n(S) \leq C \sum_{i=1}^{N-K_N} \kappa_i  w_{N,i}^{\alpha-\eta}(\sigma) \; .
  \end{align*}
  For $i\le N-K_N$ we write
  \begin{align*}
    w_{N,i}(\sigma)=\prod_{k=i+1}^{N}(1-\sigma\gamma_k)=w_{N-K_N,i}(\sigma)\,w_{N,N-K_N+1}(\sigma) \; , 
  \end{align*}
  and therefore 
  \begin{align*}
    \tilde{r}_n(S) \leq C 
    w_{N,N-K_N+1}^{\alpha-\eta}(\sigma) \sum_{i=1}^{N-K_N} {\kappa_i} w_{N-K_N,i}^{\alpha-\eta}(\sigma) \; .
  \end{align*}
  By \Cref{lem:bound-recursion-toeplitz} (applied with $u_i=1$,
    $v_i=(1-\sigma\gamma_i)^{\alpha-\eta}$ and $z_i=\kappa_i$), we know that the sum
  above is uniformly bounded.  Since $\log(1-x)\le -x$ for $x\geq0$, we have
  \begin{align*}
    w_{N,N-K_N}^{\alpha-\eta}(\sigma)=\prod_{k=N-K_N+1}^N(1-\sigma\gamma_k)^{\alpha-\eta}
    \le\exp\Big(-(\alpha-\eta)\sigma\sum_{k=N-K_N+1}^N\gamma_k\Big)\;.
  \end{align*}
  By the regular variation of $\{\gamma_i\}_{i\geq 1}$, there exists $c>0$ such that for all $N$
  large enough
  \begin{align*}
    \sum_{j=N-K_N+1}^N\gamma_j\ge c\,K_N\gamma_N\ge cK\;.
  \end{align*}
  and therefore
  \begin{align*}
    w_{N,N-K_N}^{\alpha-\eta}(\sigma)\le \rme^{-c'K}
  \end{align*}
  for some $c'>0$. Combining the above estimates yields, for some constant~$C'$, 
  \begin{align*}
    \tilde{r}_n(S) \leq C' \rme^{-c'K}.
  \end{align*}
  Letting $K\to\infty$ concludes the proof of \Cref{eq:lim_lim_sup_K}.  We now prove
  \Cref{eq:mean-measure>K}. Let $G$ be the {piecewise linear} function such that
  \begin{align*}
    G(0)=0,\qquad G(N)=\sum_{i=1}^{N}\delta_i\;,
  \end{align*}
  and $\quantilefunction{G}$ be its inverse defined by
  \begin{align*}
    G^{-1}(u)=\inf\{x:G(x)\ge u\}\;.
  \end{align*}
  Both $G$ and $G^{-1}$ are continuous and
  $G^{-1}\!\left(\sum_{i=1}^{N}\delta_i\right)=N$. Let $g$ be the derivative of $G$, i.e.
  \begin{align*}
    g(t)=\delta_N\ \text{ if } N-1<t\le N\;.
  \end{align*}
  Then, $G^{-1}$ is also piecewise differentiable and
  \begin{align*}
    (G^{-1})'(t)=\frac{1}{g(N)}\ \text{ if } G(N-1)<t<G(N)\;.
  \end{align*}
  By regular variation of the sequence $\{\gamma_i\}_{i\geq 1}$ hence of $\{\delta_i\}_{i\geq 1}$, the
  function $G$ is regularly varying with index $1-\rho$ and $G^{-1}$ is regularly varying
  with index $1/(1-\rho)$.

  Define $B_N(t)=W_{N,G^{-1}(t)}$, i.e. $B_N(t)=W_{N,i}$ if $G(i-1)<t\le G(i)$. Then
  \begin{align*}
    \sum_{i=N-K_N}^{N}\delta_i \nu_i(W_{N,i}^{-1}S)
    & = \int_{G(N-K_N)}^{G(N)} \nu_{G^{-1}(t)}(B_N(t)S)\,\rmd t + {\bigo(\delta_{N-K_N})} \\
    & = \int_{0}^{G(N)-G(N-K_N)} \nu_{G^{-1}(t)}\!\big(B_N(G(N)-t)S\big)\,\rmd t + {\bigo(\delta_{N-K_N})}\;,
  \end{align*}
  where ${\bigo(\delta_{N-K_N})}$ is the correction for the lower bound.  We will prove
  that
  \begin{align}
    \label{eq:G-G_bounded_uniformly}
    \lim_{N\to\infty}G(N)-G(N-K_N)=K\;,
  \end{align}
  and uniformly for $t\le G(N)-G(N-K_N)$,
  \begin{subequations}
    \begin{align}
      \label{eq:conv_to_t}
      \lim_{N\to\infty}\gamma_N\big(N-G^{-1}(G(N)-t)\big)=t, \\
      \lim_{N\to\infty}B_N(G(N)-t)=e^{-tA}\;. \label{eq:conv_to_e^A}
    \end{align}
  \end{subequations}
  If the above are proven, then the extended continuous mapping theorem (cf. \cite[Theorem
  5.5]{billingsley:1968}) gives
  \begin{align*}
    \lim_{N\to\infty}\int_{0}^{G(N)-G(N-K_N)} \nu_{G^{-1}(t)}\!\big(B_N(G(N)-t)S\big)\,\rmd t
    =\int_{0}^{K}\nu(e^{tA}S)\,\rmd t\;.
  \end{align*}
  Thus, we now prove \eqref{eq:G-G_bounded_uniformly} and
  \eqref{eq:conv_to_t}-\eqref{eq:conv_to_e^A}. For \eqref{eq:G-G_bounded_uniformly}, we
  have
  \begin{align*}
    G(N)-G(N-K_N)
    & = \int_{N-K_N}^{N} g(t)\,\rmd t
      = Ng(N)\int_{1-K_N/N}^{1}\frac{g(Ns)}{g(N)}\,\rmd s\;.
  \end{align*}
  Since $K_N/N\to 0$, by regular variation and the uniform convergence theorem
  (\cite[Theorem 1.5.2]{bingham:goldie:teugels:1989}), we have
  \begin{align*}
    G(N)-G(N-K_N) & \sim Ng(N)\int_{1-K_N/N}^{1}s^{-\rho}\,\rmd s \\
                  & = Ng(N)\big(1-(1-K_N/N)^{1-\rho}\big)/(1-\rho) \\
                  & \sim Ng(N)K_N/N = K\;.
  \end{align*}
  This proves \eqref{eq:G-G_bounded_uniformly} and consequently  $G(N)-G(N-K_N)$ is bounded.

  Next for $t\leq G(N)-G(N-K_N)$, applying again the uniform convergence theorem,
  \begin{align*}
    &\gamma_N\big(N-G^{-1}(G(N)-t)\big)
     = \gamma_N\int_{G(N)-t}^{G(N)}\frac{1}{g(G^{-1}(s))}\,\rmd s \\
    & = \frac{\gamma_N G(N)}{g(N)}\int_{1-t/G(N)}^{1}
      \underbrace{\frac{g(G^{-1}(G(N)))}{g(G^{-1}(G(N)s))}}_{{\to 1,\ \mathrm{uniformly}}}\,\rmd s   \sim \frac{\gamma_N G(N)}{g(N)}\frac{t}{G(N)} = t\;.
  \end{align*}
  The convergence \eqref{eq:conv_to_t} is uniform \wrt~$t$ on compact subsets of
  $(0,\infty)$.

  Finally,
  \begin{align*}
    B_{G^{-1}(G(N)-t)}
    & = \prod_{i=G^{-1}(G(N)-t)+1}^{N}(\id-\gamma_i A) \\
    & {\sim (\id-\gamma_N A)^{N-G^{-1}(G(N)-t)}}  \sim e^{-\gamma_N(N-G^{-1}(G(N)-t))A}\to e^{-tA}\;,
  \end{align*}
  uniformly on $t\le G(N)-G(N-K_N)$. This proves \eqref{eq:conv_to_e^A} and concludes the
  proof of \Cref{eq:mean-measure>K} and \Cref{eq:meanmeasure-convergence-SGD}.
\end{proof}

\begin{proof}[Proof of \Cref{eq:nullarray-convergence-SGD}]
  Since $\gamma_i/b_i\to0$, for any fixed $\eta>0$ there exists $i_0$ such that
  \begin{align*}
    \max_{i>i_0} \overline{F_0}\left(\epsilon \quantilefunction{F}[0](1-\gamma_i/b_i)\right) \leq \eta  \; .
  \end{align*}
  Set $C=\epsilon \quantilefunction{F}[0](1-\gamma_{i_0}/b_{i_0})$. The sequence $\{\gamma_i/b_i\}_{i\geq 1}$ is
  non-increasing, $\vvvert A_{N,i}\vvvert\leq 1$, hence
  \begin{align*}
    &\max_{1\leq i\leq N}\pr(\|W_{N,i}\GNZ_0\|>\epsilon)
     = \max_{1\leq i\leq N}\overline{F_0}
      \left(\frac{\epsilon \quantilefunction{F}[0]\left(1-\frac{\gamma_i}{b_i}\right)}
      {\vvvert A_{N,i}\vvvert}\right)    \\
    & \leq \max_{1\leq i\leq i_0}\overline{F_0}
      \left(\frac{C}{\vvvert A_{N,i}\vvvert}\right)
      +\max_{i>i_0}\overline{F_0}
      \left(\epsilon \quantilefunction{F}[0]\left(1-\frac{\gamma_i}{b_i}\right)\right)  \leq \max_{1\leq i\leq i_0}\overline{F_0}
      \left(\frac{C}{\vvvert A_{N,i}\vvvert}\right) + \eta \; .
  \end{align*}
  Since $\lim_{N\to\infty}\vvvert A_{N,i}\vvvert=0$ for each fixed $i$, we have
  \begin{align*}
    \limsup_{N\to\infty} \max_{1\leq i\leq N}\pr(\|W_{N,i}\GNZ_0\|>\epsilon) \leq \eta \; .
  \end{align*}
  Since $\eta$ is arbitrary, this concludes the proof.
\end{proof}

\begin{proof}[Proof of \Cref{eq:ANSJ-convergence-SGD}]
  Let $\sigma$ be the largest eigenvalue of $A$ and let $w_{N,i}(\sigma)$ be as
  in~\Cref{def:wsigma}. 
  Then
  \begin{align*}
    \sum_{j=1}^{\pi_N}
    & \esp\left[\|W_{N,k_j}\GNZ_0\|^2 \ind{\|W_{N,k_j}\GNZ_0\|\leq \epsilon}\right] \\
    & = \sum_{i=1}^N b_i\esp\left[\|W_{N,i}\GNZ_0\|^2
      \ind{\|W_{N,i}\GNZ_0\|\leq\epsilon}\right] \\
    & \leq \sum_{i=1}^N \frac{b_iw_{N,i}^2(\sigma)}{(\quantilefunction{F}[0](1-\gamma_i/b_i))^2}
      \esp\left[\|\GNZ_0\|^2
      \ind{w_{N,i}(\sigma)\|\GNZ_0\|\leq\epsilon \quantilefunction{F}[0](1-\gamma_i/b_i)}\right] \\
    & \leq C\epsilon^{2} \sum_{i=1}^N b_i 
      \pr\left(w_{N,i}(\sigma)\|\GNZ_0\|>\epsilon \quantilefunction{F}[0](1-\gamma_i/b_i)\right)  \; , 
  \end{align*}
  where the last bound is obtained by Karamata theorem,
  cf.~\citet[Proposition~1.4.6]{kulik:soulier:2020}. Next, recalling $\kappa_i$ from
  \Cref{eq:def-kappai} and using Potter's theorem as in the proof of
  \Cref{eq:meanmeasure-convergence-SGD}, we obtain, for $C>0$ and $\eta\in(0,\alpha)$ and
  $i_0$ large enough,
  \begin{align*}
    \sum_{j=1}^{\pi_N}
    & \esp\left[\|W_{N,k_j}\GNZ_0\|^2 \ind{\|W_{N,k_j}\GNZ_0\|\leq \epsilon}\right] \\
    & \leq C\epsilon^{2} \sum_{i=1}^{i_0} b_i 
      \pr\left(w_{N,i}(\sigma)\|\GNZ_0\|>\epsilon \quantilefunction{F}[0](1-\gamma_i/b_i)\right)  \\
    & \phantom{\leq +}
      + C\epsilon^{2} \sum_{i=i_0+1}^N \kappa_i
      \frac{\pr\left(w_{N,i}(\sigma)\|\GNZ_0\|>\epsilon \quantilefunction{F}[0](1-\gamma_i/b_i)\right)}
      {\pr\left(\|\GNZ_0\|> \quantilefunction{F}[0](1-\gamma_i/b_i)\right)}  \\
    & \leq C\epsilon^{2} \sum_{i=1}^{i_0} b_i 
      \pr\left(w_{N,i}(\sigma)\|\GNZ_0\|>\epsilon \quantilefunction{F}[0](1-\gamma_i/b_i)\right)
      + C\epsilon^{2-\alpha+\eta} \sum_{i=i_0+1}^N \kappa_i w_{N,i}^{\alpha-\eta}(\sigma)\;.
  \end{align*}
  Since for each fixed $i$, $\lim_{N\to\infty} w_{N,i}=0$, the first sum tends to 0 as
  $n\to\infty$. By  \Cref{lem:bound-recursion-toeplitz} applied
    as previously we obtain
  \begin{align*}
    \limsup_{N\to\infty}     \sum_{j=1}^{\pi_N}
    & \esp\left[\|W_{N,k_j}\GNZ_0\|^2 \ind{\|W_{N,k_j}\GNZ_0\|\leq \epsilon}\right]
      = \bigo(\epsilon^{2-\alpha+\eta})   \; .
  \end{align*}
  This concludes the proof of \Cref{eq:ANSJ-convergence-SGD} and of
  \Cref{th:stable_law_convergence}.
\end{proof}


\section{Averaging}
\label{sec:averaging}
We now study averaging of the algorithm, that is normalized sums 
\begin{align*}
  c_N^{-1}  \sum_{i=1}^N(\bstheta_i-\bstheta^*) \; , 
\end{align*}
where the norming sequence $\{c_N\}_{N\geq 1}$ is defined as follows
\begin{align*}
  %
  c_N & = b_N^{-1}\quantilefunction{F}[0]\left(1-\frac{1}{\beta_N b_N^\alpha}\right) \; ,  \ \
        \beta_N  = \sum_{i=1}^Nb_i^{1-\alpha} \; .
\end{align*}
We need to strengthen the assumptions on the learning and batching rates.
\begin{assumption}
  \label{assumption:learning-rate-rv-again}
  The learning rate $\{\gamma_i\}_{i\geq 1}$ is non-increasing and regularly varying with
  index $-\rho$, $\rho\in [0,1)$, the reciprocal of the learning rate sequence
  $\{1/\gamma_i\}_{i\geq 1}$ is concave, the batching rate $\{b_i\}_{i\geq1}$is
  non-decreasing and regularly varying with index $r\geq0$ and
  $\lim_{i\to\infty}\gamma_i/b_i=0$, and
    \begin{align}
      \label{eq:conditions-r-rho}
      r(\alpha-1)<1 \; ,  \ \ \rho\alpha>1-r(\alpha-1) \; .
    \end{align}
\end{assumption}

\begin{theorem}
  \label{th:averaging_stable_convergence}
  Let \Cref{assumption:function_H,assumption:regvar-GNZ,%
    assumption:gradient_noise_new_as,%
    assumption:function_H_taylor_exp,assumption:learning-rate-rv-again} hold with
  $\gamma_1L<1$. Then
  \begin{align*}
    \frac{1}{c_N}\sum_{i=1}^N(\bstheta_i-\bstheta^*)\Rightarrow S_{\infty} \; , 
  \end{align*}
  where $S_\infty$ is a stable random vector with characteristic function given by
  \begin{align*}
    \esp\left[\exp(iu^\top S_\infty)\right]
    &= \exp\left(\int_{\Rset^d\setminus\{0\}}
      \Big(e^{iu^\top x}-1-iu^\top x\Big)\,\overline{\nu}(\rmd x)\right), \\
      \overline{\nu}(\cdot)
      & =\nu(\nabla H(\bstheta^*)\cdot)
  \end{align*}

\end{theorem}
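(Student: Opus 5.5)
We follow the strategy of the proof of \Cref{th:stable_law_convergence}: reduce $\sum_{i=1}^N(\bstheta_i-\bstheta^*)$ to a weighted sum of the \iid\ vectors $\GNZ_j$ plus negligible remainders, and then apply \Cref{theo:triangular_convergence}.

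Write $\Delta_i=\bstheta_i-\bstheta^*$ and $A=\nabla H(\bstheta^*)$. From \eqref{eq:two_rests} we have $\gamma_{i+1}A\Delta_i=\Delta_i-\Delta_{i+1}-\gamma_{i+1}\gradientnoise_{i+1}(\bstheta^*)+\gamma_{i+1}(R^{(1)}_{i+1}+R^{(2)}_{i+1})$. Dividing by $\gamma_{i+1}$ and summing (the Polyak--Juditsky identity) gives
\begin{align*}
  A\sum_{i=0}^{N-1}\Delta_i
  = -\sum_{i=1}^{N}\gradientnoise_i(\bstheta^*)
  + \sum_{i=0}^{N-1}\frac{\Delta_i-\Delta_{i+1}}{\gamma_{i+1}}
  + \sum_{i=1}^{N}\big(R^{(1)}_i+R^{(2)}_i\big) \; .
\end{align*}
The main term, after de-batching as in \eqref{eq:Xtilde}, becomes $-\sum_{j=1}^{\pi_N}b_{k_j}^{-1}\GNZ_j$. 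Applying \Cref{theo:triangular_convergence} to $X_{j,N}=-(c_Nb_{k_j})^{-1}A^{-1}\GNZ_j$, the mean-measure condition reads
\begin{align*}
  \sum_{i=1}^N b_i\pr\big(\GNZ_0\in c_Nb_iA\,\cdot\big)
  \sim \sum_{i=1}^N b_i\,\overline{F_0}(c_Nb_i)\,\nu(A\,\cdot) \; .
\end{align*}
By Potter bounds and regular variation, $b_i\overline{F_0}(c_Nb_i)\approx b_i^{1-\alpha}\overline{F_0}(c_N)$ uniformly over $i\le N$, since $b_i\le b_N$ and $c_Nb_i\to\infty$. Hence the sum is asymptotically $\beta_N\overline{F_0}(c_Nb_N)\,b_N^{\alpha}\cdot b_N^{-\alpha}\cdots$. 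The choice $c_Nb_N=\quantilefunction{F}[0](1-1/(\beta_Nb_N^\alpha))$ is exactly what makes this ratio tend to one.

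This step uses that $\beta_N\to\infty$ is regularly varying with index $1-r(\alpha-1)>0$, which is the first condition in \eqref{eq:conditions-r-rho}. We also need the slowly varying corrections to be uniform in $i$; for small $i$, where $b_i$ does not vary regularly yet, we truncate at a fixed $i_0$. The sign is handled because $\nu$ is replaced by its image under $x\mapsto -x$, but the statement writes $\nu(A\cdot)$, so we will check that the symmetry issue is absorbed. The null-array and small-jump conditions follow from Karamata's theorem exactly as in the proofs of \Cref{eq:nullarray-convergence-SGD,eq:ANSJ-convergence-SGD}, using $\max_{i\le N}1/(c_Nb_i)\to0$.

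The remaining work is showing that the remainders are $o_P(c_N)$, and this is the main obstacle. For the telescoping term we use summation by parts:
\begin{align*}
  \sum_i\frac{\Delta_i-\Delta_{i+1}}{\gamma_{i+1}}
  = \frac{\Delta_0}{\gamma_1}-\frac{\Delta_N}{\gamma_N}
  + \sum_{i=1}^{N-1}\Delta_i\Big(\frac1{\gamma_{i+1}}-\frac1{\gamma_i}\Big) \; .
\end{align*}
Concavity of $1/\gamma_i$ gives $0\le 1/\gamma_{i+1}-1/\gamma_i\lesssim 1/(i\gamma_i)$. Combined with \Cref{th:p_framework_convergence_b>1} for $p<\alpha$, namely $\esp\|\Delta_i\|\le(\esp\|\Delta_i\|^p)^{1/p}=\bigo((\gamma_i/b_i)^{1-1/p})$, we need $\sum_{i\le N}(i\gamma_i)^{-1}(\gamma_i/b_i)^{1-1/p}=o(c_N)$ and $(\gamma_N/b_N)^{1-1/p}/\gamma_N=o(c_N)$. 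Since $c_N$ is regularly varying with index $(1-r(\alpha-1))/\alpha-r+\ldots$, comparing exponents with $p$ close to $\alpha$ reduces to $\rho\alpha>1-r(\alpha-1)$, the second condition in \eqref{eq:conditions-r-rho}.

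For the other two remainders we use \Cref{assumption:function_H_taylor_exp}, which gives $\esp\|R^{(1)}_i\|\le C\,\esp\|\Delta_i\|^q$ with $q<p<\alpha$. \Cref{assumption:gradient_noise_new_as} shows that $R^{(2)}_i$ are martingale differences with $\esp\|R^{(2)}_i\|^p\le Cb_i^{1-p}\esp\|\Delta_i\|^p$. A von Bahr--Esseen type inequality then bounds $\esp\|\sum_i R^{(2)}_i\|^p\le C\sum_i b_i^{1-p}(\gamma_i/b_i)^{p-1}$. Each bound must again be checked against $c_N$ by an exponent comparison with $p,q$ chosen close to $\alpha$, with strict inequalities giving room for the slowly varying factors. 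Finally, the shift from $\sum_{i=0}^{N-1}$ to $\sum_{i=1}^N$ costs $(\Delta_0-\Delta_N)/c_N\to0$ in probability. Multiplying by $A^{-1}$ concludes the proof.
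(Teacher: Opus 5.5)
Your route is correct but differs from the paper's at the key decomposition. The paper never divides by $\gamma_{i+1}$. It unrolls the recursion into $\sum_{i=1}^N\Delta_i=S_N+U_N+V_N$ with $S_N=\sum_{i=1}^N G_{N,i}\gradientnoise_i(\bstheta^*)$ and $G_{N,i}=\gamma_i\sum_{j=i}^N\prod_{k=i+1}^j(\id-\gamma_kA)$. That avoids any telescoping term, but the noise then carries $N$-dependent matrix weights. Checking the mean-measure condition therefore needs three things: \Cref{lem:bound-on-matrices} (uniform bounds and the sandwich $(1-\epsilon)A^{-1}\preceq G_{N,i}\preceq(1+\epsilon)A^{-1}$ for $i_0\le i\le N-T/\gamma_N$, which is where concavity of $1/\gamma_i$ enters), a separate argument that the last $T/\gamma_N$ indices are negligible, and inner/outer $\epsilon$-approximations of $S$. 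The remainders in \Cref{lem:next_term} are likewise $G_{N,i}$-weighted. Your Polyak--Juditsky identity makes the weight exactly $A^{-1}$. The triangular-array conditions then reduce to \Cref{lem:carac-c_N} plus uniform vague convergence of $\nu_{c_Nb_i}$, and $\sum_iR^{(1)}_i$, $\sum_iR^{(2)}_i$ appear unweighted. The price is the Abel-summation term, which you control using concavity ($0\le 1/\gamma_{i+1}-1/\gamma_i\le \gamma_i^{-1}/(i-1)$) together with \Cref{th:p_framework_convergence_b>1}. The paper's choice keeps the argument parallel to \Cref{th:stable_law_convergence}; yours is shorter and needs no spectral analysis of the $G_{N,i}$.

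Three points to fix when you write it out.

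First, the sign cannot be absorbed. The noise enters as $-\gamma_{i+1}\gradientnoise_{i+1}(\bstheta^*)$, so the limit has L\'evy measure $\nu(-A\,\cdot)$. This equals the stated $\nu(A\,\cdot)$ only when $\nu$ is symmetric. The paper reaches $\nu(A\,\cdot)$ only because its rewritten recursion carries $+\gamma_{i+1}\gradientnoise_{i+1}(\bstheta^*)$.

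Second, the approximation $b_i\overline{F_0}(c_Nb_i)\approx b_i^{1-\alpha}\overline{F_0}(c_N)$ is not uniform in $i\le N$ when $b_N\to\infty$: ratios such as $\ell_0(c_N^\alpha)/\ell_0(b_N^\alpha c_N^\alpha)$ need not tend to $1$. Truncating at a fixed $i_0$ does not fix this. Instead, anchor at $c_Nb_N$, use the uniform convergence theorem for $i\ge\epsilon N$, and use Potter bounds for $i<\epsilon N$ (at cost $\bigo(\epsilon^{1-r(\alpha-1+\delta)})$). This is exactly \Cref{lem:carac-c_N}.

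Third, your exponent bookkeeping is off. $c_N$ has index $(1-r(\alpha-1))/\alpha$. Your telescoping terms have index at most $\max\{(\rho-r(\alpha-1))/\alpha,0\}$ as $p\uparrow\alpha$, so they need only $\rho<1$ and $r(\alpha-1)<1$. The condition $\rho\alpha>1-r(\alpha-1)$ is instead what $\sum_i\esp\|R^{(1)}_i\|\lesssim\sum_i(\gamma_i/b_i)^{(p-1)q/p}=\smallo(c_N)$ requires, as in \Cref{lem:next_term}. It also forces $\rho>0$, which is what makes your martingale bound $\sum_i(\gamma_i/b_i^2)^{p-1}=\smallo(c_N^p)$ hold.
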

\paragraph{Comments:}
\begin{enumerate}
\item The rate of convergence is independent of the learning rate. In particular, taking
  the constant batch size $b=1$ we recover the rate $N^{1/\alpha}$ (up to a slowly varying
  function) as in \cite{wang:gurbuzbalanan:shu:simsekli:erdogdu:2021}.  However, besides
  batching, we work under much more general assumptions than in the aforementioned
  reference and we also make weaker restrictions on the learning rate.
\item The assumption that the sequence $\{1/\gamma_i\}_{i\geq 1}$ is concave is meant to simplify some technical
  arguments. This is true for the most commonly used learning rates $\gamma_i=ci^{-\rho}$,
  $0\leq \rho < 1$ or $\gamma_i=c/\log(i)$.

\item The second condition in \Cref{eq:conditions-r-rho} ensures that certain remainder
  term is negligible. The first one ensures that $c_N\to\infty$. Indeed, since the
  sequence $\{b_i\}_{i\geq 1}$ is regularly varying with index $r\geq 0$, $\beta_N$ is regularly
  varying with index $1-(\alpha-1)r$ and $ \beta_N b_N^\alpha$ is regularly varying with
  index $r+1$. Since the map $x\mapsto \quantilefunction{F}[0](1-1/x)$ is regularly
  varying at infinity with index $1/\alpha$, we obtain by the composition of regularly
  varying functions, cf.~\citet[Proposition~1.1.9]{kulik:soulier:2020}, that
  $\quantilefunction{F}[0](1-1/b_N^\alpha\beta_N)$ is regularly varying with index
  $(r+1)/\alpha$ and finally that $c_N$ is regularly varying with index
  $(1-r(\alpha-1))/\alpha)$.  More precisely, writing
  ${1-F_0(x)}=\frac1{x^{\alpha}\ell_0(x^{\alpha})}$ with $\ell_0$ slowly varying, we have
  \begin{align}
    \label{eq:scaling_sequence_averaging}
    c_N = \left(\beta_N\ell_0^{\#}(\beta_Nb_N^{\alpha})\right)^{\frac{1}{\alpha}},
  \end{align}
  where $\ell_0^{\#}$ is a de Bruijn conjugate of $\ell_0$. The properties
    $\beta_N\to\infty$ and $c_N\to\infty$ can still hold if $r(\alpha-1)=1$ but depends on
    the slowly varying functions appearing in $F_0$ and the sequence $\{b_i\}_{i\geq 1}$. Considering this case would lead to technicalities of no practical interest.
\item The presence of the term $b_N^\alpha$ inside the slowly varying function in
  \Cref{eq:scaling_sequence_averaging} is significant.  For instance, if $b_i=i^r$, $r>0$
  and $\ell_0(x)=\log^\kappa(x)$, $\kappa\not=0$, a de Bruijn conjugate is then
  $1/\log^\kappa$ and
  \begin{align*}
    \lim_{N\to\infty}\frac{\ell_0^{\#}(\beta_N b_N^\alpha)}
    {\ell_0^{\#}(\beta_N)} = \frac{1+r}{1+r-r\alpha} \; .
  \end{align*}
  With the same $b_i$ and $\ell_0(x)=\exp(c \log^\kappa(x))$ with $\kappa\in (0,1)$ and
  $c\not=0$ (which is slowly varying), we have
  \begin{align*}
    \lim_{N\to\infty}\frac{\ell_0^{\#}(\beta_N b_N^\alpha)}  {\ell_0^{\#}(\beta_N)}
    =
    \begin{cases}
      0 & \mbox{ if } c<0 \; , \\
      +\infty& \mbox{ if }c>0 \; .
    \end{cases}
  \end{align*}
\end{enumerate}
\paragraph{Complexity comparison.}
Consider $b_i=i^r$, $r\geq 0$ and $\gamma_i=i^{-\rho}$, $\rho\in (0,1)$. Then the resulting sample size is $n=\sum_{i=1}^N b_i\approx \frac{1}{r+1}N^{r+1}$ and the scaling factor $c_N$ is of the order 
$$
C_N\approx N^{(1-r(\alpha-1))/\alpha}\approx n^{B(\alpha,r)}\;, \ \ B(\alpha,r)=\frac{1-r(\alpha-1)}{\alpha(1+r)}\;.
$$ 
The function $r\to B(\alpha,r)$ is strictly decreasing in $r$. Thus, for fixed sample size $n$, batching with $r$ arbitrarily close to $1/(\alpha-1)$ gives the shortest confidence intervals for $\bstheta^*$. 
\begin{proof}[Proof of \Cref{th:averaging_stable_convergence}] 
Set $\Delta_i=\bstheta_i-\bstheta^*$
and (as previously) $A=\nabla H(\bstheta^*)$. As in the proof of
\Cref{th:stable_law_convergence} we rewrite the algorithm as
\begin{align*}
  \Delta_{i+1}=(\id-\gamma_{i+1}A)\Delta_i
  +\gamma_{i+1}\gradientnoise_{i+1}(\bstheta^*)+\gamma_{i+1}R_{i+1}\;,
\end{align*}
with
\begin{align}
  \label{eq:Rn-decomp}
  R_{i+1}=R_{i+1}^{(1)}+R_{i+1}^{(2)} \; , 
\end{align}
 $R_{i+1}^{(1)}$ and $R_{i+1}^{(2)}$ defined in \eqref{eq:Rn1} and \eqref{eq:Rn2}.
Consequently,
\begin{align}
  \sum_{i=1}^N\Delta_{i}
  & = \sum_{i=1}^N\sum_{j=1}^{i}\left(\prod_{k=j+1}^{i}(\id-A\gamma_{k})\right)
    \left(\gamma_j\gradientnoise_j(\bstheta^*)
    +\gamma_jR_j\right)+\sum_{i=1}^N\prod_{k=1}^i(\id-A\gamma_{k})\Delta_0
  \nonumber \\
  \label{eq:sum_averaging}
    &= S_N + U_N + V_N \; , 
\end{align}
with
\begin{align*}
  S_N & = \sum_{i=1}^N\sum_{j=i}^{N}\left(\prod_{k=i+1}^{j}(\id-A\gamma_{k})\right)
       \gamma_i\gradientnoise_i(\bstheta^*)\;,\\
  U_N & =\sum_{i=1}^N\prod_{k=1}^i(\id-A\gamma_{k})\Delta_0\;,\\
  V_N & = \sum_{i=1}^N\sum_{j=i}^{N}\left(\prod_{k=i+1}^{j}(\id-A\gamma_{k})\right)\gamma_iR_i\;.
\end{align*}
Set $r_i=\sum_{k=1}^i\gamma_k$. By \Cref{assumption:function_H}, \Cref{lemma:function_H} and the inequality
$1-x\leq e^{-x}$ we have
\begin{align*}
  \vvvert\prod_{k=1}^i(\id-A\gamma_{k})\vvvert^p
  \leq \prod_{k=1}^ie^{-\frac{L}{p}\gamma_k}=e^{-\frac{Lr_i}{p}} \; .
\end{align*}
By \Cref{assumption:learning-rate-rv-again}, the sequence $\{r_i\}_{i\geq 1}$ is regularly varying with index
$1-\rho>0$, thus $\sum_{i=1}^{\infty}e^{-\frac{L}{p}r_i}<\infty$ and we have
\begin{align*}
  \sup_{N\geq 1}\|\sum_{i=1}^N\prod_{k=1}^i(\id-A\gamma_{k})\Delta_0\|^p
  \leq \|\Delta_0\|^p \sum_{i=1}^{\infty}e^{-\frac{L}{p}r_i} = O_P(1) \; .
\end{align*}
Thus $U_N=O_P(1)=o_P(c_N)$. \Cref{lem:next_term} proves that $V_N=\smallo_{\pr}(c_N)$.

Let $\pi_n$ and $k_i$ be as in \Cref{eq:debatching_1a} and \Cref{eq:debatching_1b}. Then,
\begin{align*}
  S_N = \sum_{i=1}^N\sum_{j=i}^N \prod_{k=i+1}^j (\id-\gamma_{k}A)
  \gamma_i \gradientnoise_i(\bstheta^*)
  = \sum_{i=1}^N G_{N,i} \gradientnoise_i(\bstheta^*)
  = \sum_{j=1}^{\pi_N}\frac{G_{N,k_j}}{b_{k_j}}\GNZ_j\;,
\end{align*}
with 
\begin{align}
  \label{eq:g_n_j}
  G_{N,i}=\gamma_i\sum_{j=i}^N\prod_{k=i+1}^j(\id-\gamma_{k}A)\;.
\end{align}
Thus, $S_N$ is a weighted sum of independent, regularly varying random vectors. Hence, by
\Cref{theo:triangular_convergence}, we will have proved
\Cref{th:averaging_stable_convergence} if we prove the convergence of the mean measure,
the null array condition and the asymptotic negligibility of small jumps:
\begin{subequations}
  \begin{align}
    \sum_{j=1}^{\pi_N}
    & \ \pr\left(G_{N,k_j}\GNZ_0\in c_Nb_{k_j}\cdot\right) \convvague \overline{\nu} \; ,
      \label{eq:meanmeasure-averaging} \\
    \lim_{\epsilon\to0} \limsup_{N\to\infty}
     \frac1{c_N^2}
      \sum_{j=1}^{\pi_N}  \frac{1}{b_{k_j}^2} & \ \esp\left[\|G_{N,k_j}\GNZ_0\|^2
      \ind{\|G_{N,k_j}\GNZ_0\|\leq \epsilon c_Nb_{k_j}}\right] = 0 \; ,
      \label{eq:ANSJ-averaging} \\
    \lim_{N\to\infty} \max_{1 \leq i\leq N}
    & \ \pr(\|G_{N,i}\GNZ_0\| > \epsilon b_i c_N) = 0 \; ,
                                               \label{eq:nullarray-averaging}
  \end{align}
\end{subequations}
for all $\epsilon>0$.
\end{proof}

\begin{proof}[Proof of \eqref{eq:meanmeasure-averaging}]
  Define the measures
  \begin{align*}
    \nu_{t}=\frac{\pr\left(\GNZ_0 \in t \,\cdot\right)}
    {\pr\left(\|\GNZ_0\| > t\right)}\;.
  \end{align*}
  By regular variation, $\nu_t\convvague\nu$ (the exponent measure of $\GNZ_0$). Since
  $c_N\to\infty$ and $b_i\geq1$, $\nu_{b_ic_N}$ converges vaguely to $\nu$ uniformly
  \wrt~$i$.  We must prove that for every set $S$ separated from $0$ in $\Rset^d$ such
  that $AS$ is a continuity set of $\nu$,
  \begin{align}
    \label{eq:mean-measure-d>1-one_averaging}
    \lim_{N\to\infty}\sum_{i=1}^Nb_i\,\pr\left(\frac{G_{N,i}}{b_i c_N} \GNZ_0 \in S\right)=
    \lim_{N\to\infty}\sum_{i=1}^Nb_i \overline{F_0}(b_i c_N)\nu_{b_ic_N}(G_{N,i}^{-1}S) =\nu(AS)\;.
  \end{align}
  The key to proving this convergence is to realize that (contrary to the situation
    in the proof of \Cref{th:stable_law_convergence}) only terms of the sum with not too
  large indices contribute to the limit, namely those with indices $i\leq N-T/\gamma_N$
  for some arbitrary $T>0$. We will prove that
  \begin{subequations}
    \begin{align}
      \label{eq:lim_T_lim_sup_N_averaging} 
      \lim_{T\to \infty} \limsup_{N\to\infty}
      & \sum_{i=N-T/\gamma_N}^N  b_i\pr\left(\frac{G_{N,i}}{b_i c_N}\GNZ_0\in S\right)=0 \; , \\
      \label{eq:mainterm_averaging}
      \lim_{N\to\infty} & \sum_{i=1}^{N-T/\gamma_N}b_i\pr\left(\frac{G_{N,i}}{b_i c_N}
                          \GNZ_0\in S\right)=\nu(AS)\;.
    \end{align}
  \end{subequations}
  Since $S$ is separated from 0 and by \Cref{lem:bound-on-matrices} there exists
  $\epsilon>0$ (depending on $S$) such that for $T$ large enough, we have
  \begin{align*}
    \sum_{i=N-T/\gamma_N}^N b_i \pr\left(\frac{G_{N,i}}{b_i c_N}\GNZ_0\in  S\right)
    & \leq \sum_{i=N-T/\gamma_N}^N b_i
      \pr\left(\|\GNZ_0\|\geq \epsilon b_i c_N\right) 
     \leq \frac{T b_N\overline{F_0}(\epsilon b_{N-T/\gamma_N} c_N)}{\gamma_N}
      \; .
  \end{align*}
  In the latter inequality we used the monotonicity of $\{b_i\}_{i\geq 1}$. By assumption,
  $\gamma_N$ is regularly varying with index $\rho\in[0,1)$, so $\gamma_N=o(N)$. Since
  $\{b_N\}_{N\geq 1}$ is regularly varying with index $r$, $\{c_N\}_{N\geq 1}$ is regularly varying with index
  $(1-r(\alpha-1))/\alpha$ this yields that
  $$b_N\overline{F_0}(\epsilon b_{N-T/\gamma_N} c_N)/\gamma_N \sim
  b_N\overline{F_0}(\epsilon b_{N} c_N)/\gamma_N$$ and the latter sequence is regularly
  varying with index $\rho-1$, hence converges to $0$ as $N\to\infty$. This proves
    \Cref{eq:lim_T_lim_sup_N_averaging}.
  Fix $\epsilon>0$. Let $i_0$ be such that \Cref{eq:matrixGNilowerbound} holds for
  large enough $N$ and $i_0\leq i \leq N$. Then there exists an $\epsilon$-neighborhood
  $S^\epsilon$ of $S$ such that $G_{N,i}^{-1}S\subset A S^\epsilon$ for all such indices
  $i$.  This yields 
  \begin{align*}
    \sum_{i=i_0}^{N-T/\gamma_N}b_i \overline{F_0}(b_i c_N)\nu_{b_ic_N}(G_{N,i}^{-1}S)
    &   \leq \sum_{i=i_0}^{N-T/\gamma_N}b_i \overline{F_0}(b_i c_N)\nu_{b_ic_N}(A S^\epsilon)\;.
  \end{align*}
  Since $\nu_{b_ic_N}$ converges vaguely to $\nu$ uniformly w.r.t. $i$, by the Portmanteau
  theorem \cite[Theorem B.1.17]{kulik:soulier:2020}, {we obtain that for any
    $\eta>0$ and $N$ large enough
    \begin{align*}
      \nu_{b_ic_N}(A S^\epsilon)    &  \leq \nu(\overline{AS^\epsilon}) + \eta \; .
    \end{align*}  
    where $\overline{E}$ denotes the closure of a set  $E$. By
    \Cref{lem:carac-c_N} and since $\eta$ is arbitrary, this yields}
    \begin{align*}
      \limsup_{N\to\infty} \sum_{i=i_0}^{N-T/\gamma_N}
      b_i \overline{F_0}(b_i c_N)\nu_{b_ic_N}(G_{N,i}^{-1}S)
      \leq \nu(\overline{AS^\epsilon}) \; .
    \end{align*}
  Furthermore, for any fixed $i_0$, since the matrices $G_{N,i}$, $i=1,\ldots,i_0$ are
  uniformly bounded away from zero,
  \begin{align*}
    \sum_{i=1}^{i_0}b_i \pr(G_{N,i}\GNZ_0\in b_i c_N S)\leq i_0 \pr(\|\GNZ\|_p> c_N\epsilon)\to 0 
  \end{align*}
  as $N\to\infty$. Therefore, combining the last two displays,
  \begin{align*}
    \limsup_{N\to\infty}\sum_{i=1}^{N-T/\gamma_N}b_i\pr\left(\frac{G_{N,i}}{b_i c_N}\GNZ_0\in S\right)
    \leq \nu(\overline{AS^\epsilon})\;.
  \end{align*}
  Since $\epsilon$ is arbitrary, if $AS$ is a continuity set of $\nu$, this yields
  \begin{align*}
    \limsup_{N\to\infty}\sum_{i=1}^{N-T/\gamma_N}b_i\pr\left(\frac{G_{N,i}}{b_i c_N}\GNZ_0\in S\right)
    \leq \nu(AS)\;.
  \end{align*}
  On the other hand, if $S$ has a non empty interior $S^\circ$, then we can define the set
  $S_\epsilon^\circ$ by
  \begin{align*}
    S_\epsilon^\circ=\bigcup_{x,r: B(x,r)\subset S}B(x,r-\epsilon) \; .
  \end{align*}
  Then, $A^{-1}x \in S_\epsilon^\circ$ implies that $G_{N,i}x\in S^\circ$ and therefore,
  by similar arguments as previously and \Cref{lem:bound-on-matrices}, we obtain, for $S$
  such that $AS$ is a continuity set of $\nu$,
  \begin{align*}
    \liminf_{N\to\infty}\sum_{i=1}^{N-T/\gamma_N}b_i\pr\left(\frac{G_{N,i}}{b_i c_N}\GNZ_0\in S\right)
    \geq \nu((AS)^\circ)=\nu(AS)\;.
  \end{align*}
  This proves \eqref{eq:mainterm_averaging} and concludes the proof of
  \Cref{eq:meanmeasure-averaging}.
\end{proof}

\begin{proof}[Proof of \eqref{eq:ANSJ-averaging}]
  By \Cref{lem:bound-on-matrices}, the matrix norms $\vvvert G_{N,i}\vvvert$ are uniformly
  bounded. Applying successively this property, Karamata Theorem for truncated moments
  \cite[Proposition~1.4.6]{kulik:soulier:2020}, Potter's theorem
  \cite[Theorem~1.5.6]{bingham:goldie:teugels:1989} with some arbitrarily small $\delta$
  and \Cref{lem:carac-c_N}, we obtain, for some constants $C$, $C'$, $C''$,
  \begin{align*}
    \frac1{c_N^2} \sum_{j=1}^{\pi_N}  \frac{1}{b_{k_j}^2}
    & \esp\left[\|G_{N,k_j}\GNZ_0\|^2
      \ind{\|G_{N,k_j}\GNZ_0\|\leq \epsilon c_Nb_{k_j}}\right] \\
    & \leq \frac{C}{c_N^2}\sum_{i=1}^{N} b_i^{-1}\esp[\|\GNZ_0\|_p^2
      \ind{\|\GNZ_0\|_p\leq \epsilon b_i c_N/C}]  \\
    & \leq C'\epsilon^2 \sum_{i=1}^{N} b_i   \overline{F}_0( \epsilon b_i c_N/C)  
     \leq C''\epsilon^{2-\alpha-\delta}
      \sum_{i=1}^{N} b_i   \overline{F}_0(b_i c_N)  = \bigo(\epsilon^{2-\alpha-\delta}) \; .
  \end{align*}
  Letting $\epsilon\to0$ concludes the proof.
\end{proof}
\begin{proof}[Proof of \eqref{eq:nullarray-averaging}]
  By \Cref{lem:bound-on-matrices}, $\vvvert G_{N,i} \vvvert$ is uniformly bounded, hence
  there exists constant $C$ such that for any $\epsilon>0$,
  \begin{align*}
    \pr(\|G_{N,i}\GNZ_0\| > \epsilon b_i c_N)\leq 
    \overline{F_0}\left(\frac{\epsilon b_i c_N}{C}\right).
  \end{align*}
  The latter converges to 0 as $N\to\infty$ uniformly in $i$.
\end{proof}



\appendix

\section{Lemmas}
\label{sec:appendix-technical-details-convergence}

\begin{lemma}
  \label{lemma:p_norm_triangle_inequality}
  Let $p\in[1,2]$. For any $\mathbf{x,y}\in \Rset^d$,
  $\|\mathbf{x}+\mathbf{y}\|^p\leq\|\mathbf{x}\|^p+4\|\mathbf{y}\|^p
  +p\mathbf{y}^T\mathbf{x}^{\langle p-1 \rangle}$.
\end{lemma}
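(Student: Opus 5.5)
The plan is to reduce the inequality to a one-dimensional statement and then sum over coordinates. The notation $\mathbf{x}^{\langle p-1\rangle}$ acts coordinatewise, so the norm in the lemma should be read as the $\ell^p$ norm, for which $\|\mathbf{x}\|^p=\sum_{j=1}^d |x^{(j)}|^p$. With this reading it suffices to prove, for all real $a,b$,
\begin{align*}
  |a+b|^p \leq |a|^p + p\, b\,\mathrm{sign}(a)|a|^{p-1} + 4|b|^p \; ,
\end{align*}
and then add these inequalities over $j=1,\ldots,d$ with $a=x^{(j)}$, $b=y^{(j)}$. The summed middle term is exactly $p\,\mathbf{y}^T\mathbf{x}^{\langle p-1\rangle}$.

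For the scalar inequality, set $f(t)=|t|^p$. For $p\in(1,2]$, $f$ is $C^1$ with $f'(t)=p\,\mathrm{sign}(t)|t|^{p-1}$. I will use the first-order Taylor formula with integral remainder:
\begin{align*}
  f(a+b)-f(a)-f'(a)b=\int_0^1\big(f'(a+sb)-f'(a)\big)\,b\,\rmd s \; .
\end{align*}
The key step is a Hölder estimate for $\phi_q(t)=\mathrm{sign}(t)|t|^q$ with $q=p-1\in(0,1]$, namely $|\phi_q(u)-\phi_q(v)|\leq 2|u-v|^q$.
\begin{itemize}
\item If $u$ and $v$ have the same sign, subadditivity of $t\mapsto t^q$ on $[0,\infty)$ gives $\big||u|^q-|v|^q\big|\leq |u-v|^q$.
\item If they have opposite signs, concavity gives $|u|^q+|v|^q\leq 2^{1-q}(|u|+|v|)^q = 2^{1-q}|u-v|^q \leq 2|u-v|^q$.
\end{itemize}
Plugging this into the remainder gives
\begin{align*}
  \Big|\int_0^1 \big(f'(a+sb)-f'(a)\big)\,b\,\rmd s\Big| \leq 2p\,|b|^{p}\int_0^1 s^{p-1}\,\rmd s = 2|b|^p \leq 4|b|^p \; .
\end{align*}

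The endpoint $p=1$ has to be handled separately, since $|t|$ is not differentiable at $0$. There I will use the convention $\mathrm{sign}(0)=0$ and check directly that $|a+b|-|a|-b\,\mathrm{sign}(a)\leq 2|b|$. This follows from $|a+b|-|a|\leq|b|$ together with $|b\,\mathrm{sign}(a)|\leq|b|$.

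The only step requiring any care is the Hölder bound for $\phi_q$ when $u$ and $v$ have opposite signs. Everything else is routine, and the constant $4$ leaves slack.
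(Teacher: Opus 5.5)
Your proof is correct. It also does more than the paper does: the paper gives no argument for this lemma and only refers to Lemma~8 of Wang et al.\ (2021).

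Your route is self-contained. You reduce coordinatewise to a scalar inequality, write the first-order Taylor remainder of $t\mapsto|t|^p$ in integral form, and bound it using the $(p-1)$-H\"older continuity of $t\mapsto \mathrm{sign}(t)|t|^{p-1}$. This gains two things over the bare citation.

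First, it makes explicit that $\|\cdot\|$ must be the $\ell^p$ norm. This is not a cosmetic reading, since elsewhere the paper speaks of ``any norm''. For the Euclidean norm and $p\in(1,2)$ the inequality actually fails. Take $\mathbf{x}=(1,1)$ and $\mathbf{y}=-t(1,1)$ with $t\downarrow 0$. The left side minus the first two terms on the right is $pt(2-2^{p/2})+O(t^2)$, which is positive at order $t$. This dominates $4\|\mathbf{y}\|^p=O(t^p)$.

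Second, your estimate gives a sharper constant. You obtain $2|b|^p$. Keeping the factor $2^{1-q}$ from the opposite-sign case instead of bounding it by $2$ gives $2^{2-p}|b|^p$. So the $4$ in the statement is pure slack.

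The only extra cost is the separate treatment of $p=1$, which you handle correctly. Any convention $\mathrm{sign}(0)\in[-1,1]$ works there.
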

 \begin{proof}
   See \citet[Lemma~8]{wang:gurbuzbalanan:shu:simsekli:erdogdu:2021}.
 \end{proof}

\begin{lemma}\label{lemma:function_H}
  \Cref{assumption:function_H} holds if and only if there exists $p\geq1$ and $\delta,L>0$
  (which may depend on $p$) such that for all $t \in [0,\delta]$ and for all $\bstheta\in\Theta$,
  \begin{align}
     \label{eq:lemma_functionH_1}
    \vvvert \id - t\nabla H(\bstheta) \vvvert^p \leq 1 - Lt \; .
  \end{align}
  If \eqref{eq:lemma_functionH_1} holds for one $p$, then it holds for all $p\geq1$.
\end{lemma}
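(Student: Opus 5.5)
The plan is to reduce everything to the behaviour of $t\mapsto\vvvert \id-tM\vvvert$ near $t=0$, uniformly over the compact family $\{M=\nabla H(\bstheta):\bstheta\in\Theta\}$. I would prove the last sentence of the lemma first, because it makes the choice of $p$ irrelevant. Suppose \eqref{eq:lemma_functionH_1} holds for some $p\ge1$ with constants $\delta,L$. Taking $p$-th roots and using concavity of $x\mapsto x^{1/p}$ gives
\begin{align*}
\vvvert \id-t\nabla H(\bstheta)\vvvert\le (1-Lt)^{1/p}\le 1-\frac{L}{p}\,t \; .
\end{align*}
For any $q\ge1$ and $t$ small enough that $0\le 1-Lt/p\le 1$, we have $(1-Lt/p)^q\le 1-Lt/p$. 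So the bound holds for every $q\ge1$, with $L$ replaced by $L/p$ and $\delta$ possibly shrunk. From then on it suffices to work with $p=1$.

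For the direction from \eqref{eq:lemma_functionH_1} to \Cref{assumption:function_H}, I would take $p=1$ and argue as follows. Let $\sigma$ be an eigenvalue of $M=\nabla H(\bstheta)$ with eigenvector $u$. Then $(1-t\sigma)u=(\id-tM)u$, so $|1-t\sigma|\le\vvvert \id-tM\vvvert\le 1-Lt$ for all $t\in(0,\delta]$. Expanding $|1-t\sigma|^2$ for small $t$ gives $1-2t\,\mathrm{Re}\,\sigma+O(t^2)\le 1-2Lt+O(t^2)$, hence $\mathrm{Re}\,\sigma\ge L>0$. This is the statement that the spectral abscissa is dominated by the logarithmic norm. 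The upper bound $b$ comes for free: $\nabla H$ is continuous on the compact set $\Theta$, so $|\sigma|\le\sup_{\Theta}\vvvert\nabla H\vvvert<\infty$. I would take $a=L$ and $b$ equal to this supremum.

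For the direction from \Cref{assumption:function_H} to \eqref{eq:lemma_functionH_1}, the natural argument is via the one-sided derivative $\mu(-M)=\lim_{t\downarrow0}(\vvvert \id-tM\vvvert-1)/t$, the logarithmic norm. The goal is to show $\sup_{\bstheta}\mu(-\nabla H(\bstheta))<0$ and then make the first-order expansion uniform. The uniformity step uses convexity of $t\mapsto\vvvert \id-tM\vvvert$: the difference quotient is non-decreasing in $t$, which gives $\vvvert \id-tM\vvvert\le 1+t\,\mu(-M)+t^2\vvvert M\vvvert^2 C$ with $C$ uniform over the compact family. In the clean case where $\nabla H(\bstheta)$ is symmetric and $\|\cdot\|$ is Euclidean, this is explicit. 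There $\vvvert \id-tM\vvvert=\max_\sigma|1-t\sigma|\le 1-at$ for $t\le 1/b$, so \eqref{eq:lemma_functionH_1} holds with $p=1$, $L=a$, $\delta=1/b$. In general I would first pass to an equivalent adapted norm; for a single matrix with spectrum in $[a,b]$, a Lyapunov/Jordan-type norm gives $\mu(-M)\le -a'$ for any $a'<a$. I would then use continuity of $\nabla H$ and compactness of $\Theta$ to cover $\Theta$ by finitely many neighbourhoods on which such a bound is stable.

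The hard part is this last step. It needs a single norm that is adapted to all the matrices $\nabla H(\bstheta)$ simultaneously, and for non-normal matrices the eigenvalue condition alone does not control $\vvvert\id-tM\vvvert$ in an arbitrary fixed norm. I would first try to settle it using symmetry of $\nabla H$, which holds when $H=\nabla\Phi$. Failing that, I would state the equivalence for a suitably chosen norm, which is all the proofs of \Cref{th:p_framework_convergence_b>1,th:stable_law_convergence} actually use.
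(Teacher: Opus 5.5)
Your argument for the last sentence and for the converse is sound. For the forward direction, your ``clean case'' is exactly what the paper's proof does. The paper only evaluates $\|(\id-t\nabla H(\bstheta))\mathbf{x}\|$ at unit eigenvectors $\mathbf{x}$. That bounds $\vvvert\id-t\nabla H(\bstheta)\vvvert$ only when those eigenvectors are orthonormal for $\|\cdot\|$, i.e.\ for symmetric $\nabla H(\bstheta)$ and the Euclidean norm, where $\vvvert\id-tM\vvvert_2=\max_\sigma|1-t\sigma|\le 1-at$ for $t\le1/b$.

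The ``hard part'' you isolate cannot be completed, because the forward implication is false in the stated generality. Take $M=\begin{pmatrix}1&4\\0&1\end{pmatrix}$, whose spectrum is $\{1\}$, so $H(\bstheta)=M\bstheta$ satisfies \Cref{assumption:function_H}, and $u=(1,-1)^\top/\sqrt2$. Then
\begin{align*}
  \|(\id-tM)u\|_2^2=1+2t+5t^2>1 \quad\text{for all } t>0 \; .
\end{align*}
Symmetry alone does not rescue a non-Euclidean norm either. The matrix $M=\begin{pmatrix}1&3/2\\3/2&3\end{pmatrix}$ is positive definite, yet $\vvvert\id-tM\vvvert_\infty=\vvvert\id-tM\vvvert_1=1+t/2$ for $0<t\le 1/3$.

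Your fallback via adapted norms and a finite covering also fails: the local norms do not glue, and a common norm need not exist. The logarithmic norm $\mu$ is subadditive and dominates the spectral abscissa. So a norm with $\mu(-M_1)<0$ and $\mu(-M_1^\top)<0$, for $M_1=\begin{pmatrix}1&K\\0&1\end{pmatrix}$, would put the spectrum $\{1\pm K/2\}$ of $(M_1+M_1^\top)/2$ in the open right half-plane. That is false for $K>2$. Yet both matrices have spectrum $\{1\}$ and can be values of a single $\nabla H$ on a compact $\Theta$: take $H(\bstheta)=(\theta_1+F(\theta_2),\,\theta_2+G(\theta_1))$ on an L-shaped $\Theta$ where $F'(\theta_2)G'(\theta_1)=0$.

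Separately, your uniformity step has the inequality backwards. Monotonicity of the difference quotient of the convex map $t\mapsto\vvvert\id-tM\vvvert$ gives $\vvvert\id-tM\vvvert\ge1+t\mu(-M)$, a lower bound. A uniform upper bound comes instead from $\vvvert\id-tM\vvvert\le e^{t\mu(-M)}+\tfrac12t^2\vvvert M\vvvert^2e^{t\vvvert M\vvvert}$.

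The right repair is to make symmetry of $\nabla H$ and a Euclidean norm explicit hypotheses, not to search for a general-norm proof. The norm is also not a free choice in the applications. The proof of \Cref{th:p_framework_convergence_b>1} pairs \eqref{eq:lemma_functionH_1} with \Cref{lemma:p_norm_triangle_inequality}, which is tied to a specific norm, so ``a suitably chosen norm'' is not automatically what those proofs use.

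Where your route differs from the paper's, it is the better one.
\begin{itemize}
\item Your Bernoulli-type argument, $\vvvert\id-tM\vvvert\le(1-Lt)^{1/p}\le1-Lt/p$ followed by $(1-Lt/p)^q\le1-Lt/p$, proves the $p$-independence in any norm. The paper obtains it only by passing through the eigenvalue condition, and so inherits the restriction above.
\item In the converse, $\mathrm{Re}\,\sigma\ge L$ is exactly what \eqref{eq:lemma_functionH_1} can give. It does not force real eigenvalues: $M=\begin{pmatrix}1&-\varepsilon\\ \varepsilon&1\end{pmatrix}$ satisfies it for $\|\cdot\|_2$ with eigenvalues $1\pm i\varepsilon$. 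So the literal conclusion $a\le\sigma\le b$ again needs symmetry, which the paper's converse assumes tacitly.
\item For complex $\sigma$, justify $|1-t\sigma|\le\vvvert\id-tM\vvvert$ by the fact that the spectral radius is bounded by any operator norm. Do not insert a complex eigenvector into a real norm.
\item The paper gets the upper bound $\sigma\le2/\delta$ by taking $t=\delta$, which avoids continuity of $\nabla H$. Your compactness bound is equally valid under the standing assumptions.
\end{itemize}
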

\begin{proof}
Let  \Cref{assumption:function_H}  hold. Let $\bstheta\in\Theta$ and $\sigma$ be an eigenvalue of $\nabla H(\bstheta)$ and $\mathbf{x}$ a unit norm eigenvector. Let $t\leq 1/b$. Then for any $p\geq 1$ we have
$$
\|(\id-t \nabla H(\bstheta))\mathbf{x}\|^p=
(1-t \sigma)^p  \le (1-ta)^p\leq 1-ta\;.
$$
Thus \Cref{eq:lemma_functionH_1} holds for $\delta=1/b$ and $L=a$ with $p=1$,
    hence for all $p\geq1$.

Conversely, let \eqref{eq:lemma_functionH_1} holds with $p\geq 1$. 
  Let $\bstheta\in\Theta$ and $\sigma$ be an
  eigenvalue for $\nabla H(\bstheta)$ and $\mathbf{x}$ a unit norm eigenvector. Then
    \begin{align*}
      (1-t\sigma)^p =  \|(\id-t\nabla H(\bstheta))\mathbf{x}\|^p
      \leq 1-Lt \; .
    \end{align*}
    Since this inequality must hold for all $t\in[0,1/L)$, this implies that
    $\sigma\geq L/p$. On the other hand, we have
    \begin{align*}
      \delta\sigma-1  \leq       |1-\delta\sigma| =  \|(\id-\delta\nabla H(\bstheta))\mathbf{x}\|
      \leq (1-L\delta)^{1/p} \leq 1 \; . 
    \end{align*}
    Thus $\sigma \leq 2/\delta$.  Consequently, \eqref{eq:lemma_functionH_1} implies that
    the eigenvalues of $\nabla H(\bstheta)$ are uniformly bounded above and away from zero
    over $\Theta$. 
\end{proof}

\begin{lemma}
  \label{lem:bound-recursion-toeplitz}
  Let $\{u_i\}_{i\geq1}$, $\{v_i\}_{i\geq1}$ and $\{z_i\}_{i\geq1}$ be sequences of non-negative
  real numbers. Assume that $v_n>0$ for all $n\geq1$, define $w_0>0$,
  $w_n=u_n\prod_{i=1}^{n} v_i^{-1}$, $n\geq1$, and assume that $\{w_n\}_{n\geq 0}$ is ultimately
  non-decreasing and
  \begin{align}
    \label{eq:deltaw}
    \frac{z_nu_n}{u_n-u_{n-1}v_n}=\bigo(1) \; .
  \end{align}
  Then
  \begin{align}
    \label{eq:bigo_u_n}
    \sum_{i=1}^n z_i u_i\prod_{j=i+1}^nv_i = \bigo(u_n)  \; .
  \end{align}
  Let $\{\delta_n\}_{n\geq0}$ be a non-negative sequence such that
  \begin{align}
    \label{eq:recursion-delta}
    \delta_{i+1}\leq v_{i+1}\delta_i+z_{i+1}u_{i+1} \; , \ \ i \geq 0 \;.
  \end{align}
  Then $\delta_n=\bigo(u_n)$.
\end{lemma}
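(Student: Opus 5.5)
The plan is to unroll the recursion and compare it with a telescoping identity built from the sequence $\{w_n\}$.

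\emph{Step 1: a key inequality from \eqref{eq:deltaw}.} Write $W_n=\prod_{i=1}^n v_i^{-1}$, so that $w_n=u_nW_n$. Assumption \eqref{eq:deltaw} says that, for $n$ large, $u_n-u_{n-1}v_n>0$ and $z_nu_n\le C(u_n-u_{n-1}v_n)$. Multiplying by $W_n$ and using $v_nW_n=W_{n-1}$ gives
\begin{align*}
  z_nu_nW_n\le C\,(u_nW_n-u_{n-1}W_{n-1})=C\,(w_n-w_{n-1}) \; .
\end{align*}
The ultimate monotonicity of $w_n$ makes the right-hand side nonnegative, which is consistent with this inequality.

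\emph{Step 2: proof of \eqref{eq:bigo_u_n}.} Note that $\prod_{j=i+1}^n v_j=W_i/W_n$; I read the product in \eqref{eq:bigo_u_n} as $\prod_j v_j$. Then
\begin{align*}
  \sum_{i=1}^n z_iu_i\prod_{j=i+1}^nv_j=W_n^{-1}\sum_{i=1}^n z_iu_iW_i \; .
\end{align*}
Choose $n_0$ beyond which Step 1 holds. Split the sum at $n_0$. The tail $i>n_0$ telescopes and is at most $C(w_n-w_{n_0})\le Cw_n$. The head is a fixed constant $c_0$. Dividing by $W_n$ gives the bound $Cu_n+c_0/W_n$.

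\emph{Step 3: controlling the head term.} It remains to show $1/W_n=O(u_n)$, that is, $w_n=u_nW_n$ is bounded below by a positive constant. Since $w_n$ is ultimately non-decreasing, $w_n\ge w_{n_1}$ for large $n$. We need $w_{n_1}>0$, which holds once $u_{n_1}>0$. Step 1 gives $u_n>u_{n-1}v_n\ge0$ for large $n$, so $u_n>0$ there. Hence $1/W_n\le u_n/w_{n_1}$ and \eqref{eq:bigo_u_n} follows. This positivity and lower-bound bookkeeping is the only delicate point I foresee. Everything else is the telescoping, and the constant $w_0>0$ is only needed to make the indexing well defined.

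\emph{Step 4: the recursion.} Iterating \eqref{eq:recursion-delta} gives
\begin{align*}
  \delta_n\le \Big(\prod_{j=1}^n v_j\Big)\delta_0+\sum_{i=1}^n z_iu_i\prod_{j=i+1}^nv_j \; .
\end{align*}
The first term equals $\delta_0/W_n=O(u_n)$ by Step 3, and the second is $O(u_n)$ by Step 2. Hence $\delta_n=O(u_n)$.
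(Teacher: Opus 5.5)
Your proof is correct and follows the same route as the paper. Both use the identity $w_i-w_{i-1}=W_i(u_i-u_{i-1}v_i)$ to turn $\sum_i z_iu_i\prod_{j>i}v_j=(u_n/w_n)\sum_i z_iw_i$ into a telescoping sum controlled by \eqref{eq:deltaw}, and both use $\prod_{j\le n}v_j=u_n/w_n=O(u_n)$ for the $\delta_0$ term. You differ only in spelling out the paper's ``without loss of generality $w_n$ is increasing'' step: you split off the first $n_0$ terms and get the positive lower bound on $w_n$ from $u_n>0$. Note that the sign of $u_n-u_{n-1}v_n$ comes from the ultimate monotonicity of $w_n$, not from \eqref{eq:deltaw} alone.
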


\begin{proof}
  Without loss of generality, we can assume that $\{w_n\}_{n\geq0}$ is increasing.  By the
  monotonicity of $\{w_n\}_{n\geq 0}$ and \Cref{eq:deltaw}, we have
  \begin{align*}
    \sum_{i=1}^{n} z_iu_i \prod_{j=i+1}^n v_j
    & = \frac{u_n}{w_n}\sum_{i=1}^{n} z_i w_i 
      = \frac{u_n}{w_n}\sum_{i=1}^{n} \frac{z_i u_i}{u_i-u_{i-1}v_i}(w_i-w_{i-1})
      = \bigo(u_n) \; .
  \end{align*}
  This proves \Cref{eq:bigo_u_n}.  Next, from the recursion \Cref{eq:recursion-delta} and
  \Cref{eq:bigo_u_n}, we obtain
  \begin{align*}
    \delta_n
    & \leq \delta_0\prod_{i=1}^n v_i + \sum_{i=1}^n z_iu_i\prod_{j=i+1}^n v_i 
      = \frac{\delta_0u_n}{w_n}  + \bigo(u_n)  = \bigo(u_n)
  \end{align*}
  In the last line we used again the monotonicity of $\{w_n\}_{n\geq 0}$.
\end{proof}

\begin{lemma}
  \label{lem:drift}
  The drift $\tilde\gamma$ in \eqref{eq:drift} can be expressed as 
  \begin{align*}
    \widetilde{\gamma}=\int_{\|x\|>1} x\widetilde{\nu}(\rmd x) \; .
  \end{align*}
\end{lemma}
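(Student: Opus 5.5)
This is a direct computation: expand $\int_{\|x\|>1} x\,\widetilde{\nu}(\rmd x)$ using the definition \eqref{eq:nutilde-measure} of $\widetilde{\nu}$ and match it term by term with \eqref{eq:drift}. By definition, for a nonnegative or integrable $f$,
\begin{align*}
  \int f(x)\,\widetilde{\nu}(\rmd x)=\int_0^\infty\int f(e^{-tA}y)\,\nu(\rmd y)\,\rmd t \; ,
\end{align*}
since $\nu(e^{tA}B)$ is the image of $\nu$ under $y\mapsto e^{-tA}y$ evaluated at $B$. Hence
\begin{align*}
  \int_{\|x\|>1} x\,\widetilde{\nu}(\rmd x)
  =\int_0^\infty\int e^{-tA}y\,\ind{\|e^{-tA}y\|>1}\,\nu(\rmd y)\,\rmd t \; .
\end{align*}
Integrability holds because $\|e^{-tA}\|\le Ce^{-ct}$ with $c>0$, $A$ having positive eigenvalues by \Cref{assumption:function_H}. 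Then $\int\|e^{-tA}y\|\ind{\|e^{-tA}y\|>1}\nu(\rmd y)\le C' e^{-ct}\cdot e^{ct(\alpha-1)}$ by homogeneity, and this needs a bit more care. Alternatively, integrating first over $t$ for fixed $y$ shows the region $\{t:\|e^{-tA}y\|>1\}$ has length $O(\log_+\|y\|)$, and $\int_{\|y\|>1}\|y\|\log\|y\|\,\nu(\rmd y)<\infty$ since $\alpha>1$.

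Next, split the indicator as
\begin{align*}
  \ind{\|e^{-tA}y\|>1}=\ind{\|y\|>1}+\big(\ind{\|e^{-tA}y\|> 1}-\ind{\|y\|>1}\big)
  = \ind{\|y\|>1}-\big(\ind{\|e^{-tA}y\|\le 1}-\ind{\|y\|\le 1}\big) \; .
\end{align*}
The first piece gives $\int_0^\infty e^{-tA}\,\rmd t\int_{\|y\|>1}y\,\nu(\rmd y)=A^{-1}\int_{\|y\|>1}y\,\nu(\rmd y)$, using $\int_0^\infty e^{-tA}\rmd t=A^{-1}$. The second piece is, up to sign, exactly the double integral appearing in \eqref{eq:drift}. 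So the identity reduces to matching signs with the convention used in \eqref{eq:drift}, in which the Lévy process $L$ and the first term carry the signs coming from the form of \citet{blanchet:2024}. I would recheck the sign convention there, whether the noise enters with $-$ and $A^{-1}$ versus $-A^{-1}$, and adjust accordingly; the symmetry $\nu\mapsto\nu(-\cdot)$ induced by the minus sign in the recursion is presumably what reconciles the first term.

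The main obstacle is justifying the splitting: each of the two pieces must be separately absolutely integrable in $(t,y)$, so that Fubini applies and the sum can be split. For the first piece, $\int_{\|y\|>1}\|y\|\nu(\rmd y)<\infty$ since $\alpha>1$, and $\int_0^\infty\|e^{-tA}\|\rmd t<\infty$. For the difference term, the integrand is nonzero only when exactly one of $\|y\|$ and $\|e^{-tA}y\|$ exceeds $1$. Using $ce^{-bt}\|y\|\le\|e^{-tA}y\|\le Ce^{-at}\|y\|$, this forces $\|y\|\in[1,C'e^{bt}]$ (or the analogous range), up to constants. Its contribution is bounded by $\int_0^\infty e^{-at}\int_{1\le\|y\|\le C'e^{bt}}\|y\|\,\nu(\rmd y)\,\rmd t$, and the inner integral is bounded by $\int_{\|y\|\ge1}\|y\|\nu(\rmd y)<\infty$, which closes the argument.
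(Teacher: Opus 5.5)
Your route is the same as the paper's: split $\ind{\|e^{-tA}y\|>1}=\ind{\|y\|>1}-\big(\ind{\|e^{-tA}y\|\le 1}-\ind{\|y\|\le 1}\big)$, use $\int_0^\infty e^{-tA}\,\rmd t=A^{-1}$, and use the push-forward description of $\widetilde{\nu}$. Checking absolute integrability directly is a fine substitute for the paper's $\epsilon$-truncation. Your homogeneity bound should read $C'e^{-ct}e^{-ct(\alpha-1)}=C'e^{-c\alpha t}$, which is already integrable, so no extra care is needed.

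The gap is the final step, which you leave to an unspecified adjustment of conventions. Followed through, your computation gives
\begin{align*}
  \int_{\|x\|>1} x\,\widetilde{\nu}(\rmd x)
  = A^{-1}\int_{\|y\|>1} y\,\nu(\rmd y) - D \; ,
\end{align*}
where $D$ is the double integral in \eqref{eq:drift}. Hence $\widetilde{\gamma}=-A^{-1}\int_{\|y\|>1}y\,\nu(\rmd y)+D=-\int_{\|x\|>1}x\,\widetilde{\nu}(\rmd x)$, which is the opposite of the stated sign. No convention change rescues the $+$ sign, for two reasons:
\begin{itemize}
\item The first term of \eqref{eq:drift} is already correct for a centered driving process: in truncated L\'evy--Khintchine form, $L$ has drift $-\int_{\|x\|>1}x\,\nu(\rmd x)$.
\item The lemma is an identity between two functionals of a single homogeneous measure, and the reflection $\nu\mapsto\nu(-\cdot)$ preserves its form. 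It sends $\widetilde{\nu}$ to $\widetilde{\nu}(-\cdot)$ and negates both $\widetilde{\gamma}$ and $\int_{\|x\|>1}x\,\widetilde{\nu}(\rmd x)$. So the minus sign in the SGD recursion cannot reconcile anything here.
\end{itemize}

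In fact your algebra is correct and the statement carries a sign error. Take $d=1$, $A=a>0$ and $\nu(\rmd x)=\alpha x^{-\alpha-1}\rmd x$ on $(0,\infty)$. Then $\widetilde{\nu}=\nu/(a\alpha)$, $\int_{|x|>1}x\,\widetilde{\nu}(\rmd x)=1/(a(\alpha-1))$ and $D=1/a$, so $\widetilde{\gamma}=-1/(a(\alpha-1))$.

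The minus sign is also exactly what the surrounding remark needs. Since $\int(e^{iu^\top x}-1-iu^\top x)\,\widetilde{\nu}(\rmd x)=\int(e^{iu^\top x}-1-iu^\top x\ind{\|x\|\le 1})\,\widetilde{\nu}(\rmd x)-iu^\top\int_{\|x\|>1}x\,\widetilde{\nu}(\rmd x)$, the characteristic functions \eqref{eq:chf} and \eqref{eq:chf-2} coincide if and only if $\widetilde{\gamma}=-\int_{\|x\|>1}x\,\widetilde{\nu}(\rmd x)$.

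The paper's own proof reaches the $+$ sign only through a slip in its third display. There the term $-\int_0^\infty\int_{\|B^tx\|>1,\|x\|>\epsilon}B^tx\,\nu(\rmd x)\,\rmd t$ (with $B=e^{-A}$) reappears with a plus sign. The right way to finish is to trust your computation, conclude $\widetilde{\gamma}=-\int_{\|x\|>1}x\,\widetilde{\nu}(\rmd x)$, and flag the sign in the statement, rather than to ``adjust'' \eqref{eq:drift}.
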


\begin{proof}
  Since $A$ is positive definite, $ \int_{0}^{\infty} e^{-tA}\,\rmd t=A^{-1}$.  Thus, with
  $B=e^{-A}$,
  \begin{align*}
    &\int_{t=0}^{\infty}
     \int_{\mathbb{R}^d} B^t x\left(\ind{\|B^t x\|\le 1}
      -\ind{\|x\|\le 1} \right) \nu(\rmd x) \rmd t \\
    &  = \lim_{\epsilon\to 0}\left\{
      \int_{t=0}^{\infty} \int_{\|x\|>\epsilon} B^t x
      \ind{\|B^t x\|\le 1}\nu(\rmd x)\rmd t     -\int_{t=0}^{\infty}\int_{\epsilon<\|x\|\le 1}
      B^t x\,\nu(\rmd x)\,\rmd t  \right\}  \\
    & = \lim_{\epsilon\to 0}\left\{
      \int_{t=0}^{\infty} \int_{\epsilon<\|x\|} B^t x\,\nu(\rmd x)\,\rmd t
      - \int_{t=0}^{\infty}\int_{\|B^t x\|>1,\ \|x\|>\epsilon} B^t x\,\nu(\rmd x)\,\rmd t     -A^{-1}\int_{\epsilon<\|x\|\le 1} x\,\nu(\rmd x)  \right\} \\
    & = \lim_{\epsilon\to 0}\left\{
      A^{-1}\int_{\epsilon<\|x\|} x\,\nu(\rmd x)
      +\int_{t=0}^{\infty}\int_{\|B^t x\|>1,\ \|x\|>\epsilon} B^t x\,\nu(\rmd x)\,\rmd t  -A^{-1}\int_{\epsilon<\|x\|\le 1} x\,\nu(\rmd x) \right\} \\
    & = A^{-1}\int_{\|x\|>1} x\,\nu(\rmd x) + \int_{t=0}^{\infty}\int_{\|B^t x\|>1}
      B^t x\,\nu(\rmd x)\,\rmd t \\
    & = A^{-1}\int_{\|x\|>1} x\,\nu(\rmd x)
      + \int_{t=0}^{\infty}\int_{\|x\|>1} x\,\nu(B^{-t}\rmd x)\,\rmd t \\
    & = A^{-1}\int_{\|x\|>1} x\,\nu(\rmd x) + \int_{\|x\|>1} x\,\widetilde{\nu}(\rmd x) \; .
  \end{align*}
\end{proof}

\begin{lemma}
  \label{lem:X_n_and_X_n_tilda_convergence}
  Under the assumptions of \Cref{th:stable_law_convergence}, we
  have $$X_N-\widetilde{X}^{\dag}_N=o_P(1), \ \ N\to\infty\;.$$
\end{lemma}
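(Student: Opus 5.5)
Set $D_N = X_N - \widetilde{X}^{\dag}_N$. Subtracting the recursion for $\widetilde{X}^{\dag}$ from the one for $X_N$ gives a linear recursion driven only by the remainders:
\begin{align*}
  D_{N+1} = (\id-\gamma_{N+1}A)D_N + \Big(\frac{w_{N+1}}{w_N}-1\Big)(\id-\gamma_{N+1}A)X_N
  + w_{N+1}\gamma_{N+1}\big(R^{(1)}_{N+1}+R^{(2)}_{N+1}\big) \; .
\end{align*}
Since $\widetilde{X}^\dag_0=\bstheta_0-\bstheta^*=X_0$ (because $w_0=1$), we have $D_0=0$. Unrolling yields $D_N=\sum_{i=1}^N A_{N,i}\,\eta_i$, with $\eta_i$ the sum of three forcing terms. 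The plan is to show that each of the three resulting sums tends to $0$ in $L^q$ for a suitable $q\in(1,\alpha)$, or in $L^1$, which implies $o_P(1)$. The common tool is \Cref{lem:bound-recursion-toeplitz} combined with the contraction $\vvvert A_{N,i}\vvvert\le \prod_{k=i+1}^N(1-L\gamma_k)$ from \Cref{lemma:function_H}, which holds for all $k$ since $\gamma_1L<1$. The moment bound \Cref{th:p_framework_convergence_b>1} supplies $\esp\|\bstheta_i-\bstheta^*\|^p=\bigo((\gamma_i/b_i)^{p-1})$ for every $p\in(1,\alpha)$, because the noise assumptions imply \Cref{assumption:gradient_noise}.

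\emph{Term $R^{(2)}$.} This term is a martingale difference sum, since $\esp[R^{(2)}_{i}\mid\mathcal F_{i-1}]=0$. I will bound its $p$-th moment with the inequality of \Cref{lemma:p_norm_triangle_inequality}, exactly as in the proof of \Cref{th:p_framework_convergence_b>1}. Writing $\delta_N=\esp\|\sum_i A_{N,i}w_i\gamma_iR^{(2)}_i\|^p$, the recursion becomes
\[
\delta_{N+1}\le(1-L\gamma_{N+1})\delta_N+4w_{N+1}^p\gamma_{N+1}^p\esp\|R^{(2)}_{N+1}\|^p \; .
\]
By \Cref{assumption:gradient_noise_new_as} and the moment bound,
\[
\esp\|R^{(2)}_{i}\|^p\le C b_i^{1-p}(\gamma_{i-1}/b_{i-1})^{p-1}\; .
\]
Regular variation gives $w_i^p\approx (b_i/\gamma_i)^{p(1-1/\alpha)}$ up to slowly varying factors. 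The forcing is therefore $\gamma_i u_i$ with
\[
u_i\approx \gamma_i^{p-1}\, b_i^{1-p}\,(\gamma_i/b_i)^{p-1}(b_i/\gamma_i)^{p-1/\alpha\cdot p}
=(\gamma_i/b_i)^{p/\alpha-p+p-1}\gamma_i^{p-1}b_i^{0}\cdots \; .
\]
More carefully, the exponent of $\gamma_i/b_i$ is $p/\alpha-1+(p-1)>0$ up to slowly varying terms, since $\gamma_i\le\gamma_1$ and $b_i\ge1$. So $u_i\to0$ polynomially, and \Cref{lem:bound-recursion-toeplitz} gives $\delta_N=\bigo(u_N)\to0$. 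The growth hypothesis on $u_i\prod(1-L\gamma_j)^{-1}$ follows from regular variation, as in comment 3 after \Cref{th:p_framework_convergence_b>1}.

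\emph{Term $R^{(1)}$.} Here I use \Cref{assumption:function_H_taylor_exp} with some $q\in(1,\alpha)$ and then the moment bound with exponent $p'=q$ and $L^1$ norms:
\[
\esp\|w_i\gamma_iR^{(1)}_i\|\le Cw_i\gamma_i(\gamma_{i}/b_{i})^{(q-1)}\cdot(\ldots) \; .
\]
The precise route is $\esp\|\Delta\|^q\le (\esp\|\Delta\|^{p})^{q/p}$ with $p$ close to $\alpha$. This gives a forcing $\gamma_i\cdot w_i(\gamma_i/b_i)^{(p-1)q/p}$. Its exponent in $\gamma_i/b_i$ is $(p-1)q/p-(1-1/\alpha)$, which is positive once $q,p$ are close to $\alpha$, because $(\alpha-1)-(1-1/\alpha)=(\alpha-1)^2/\alpha>0$. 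Then \Cref{lem:bound-recursion-toeplitz} applied to the $L^1$ recursion gives convergence to $0$.

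\emph{Term from $w_{N+1}/w_N-1$.} By \eqref{eq:diffw} this coefficient is $\bigo(1/N)$. The forcing is then $\bigo(i^{-1})\esp\|X_i\|$, bounded via $\esp\|X_i\|\le w_i(\esp\|\Delta_i\|^p)^{1/p}=\bigo((\gamma_i/b_i)^{1/\alpha-1/p}\cdot\text{s.v.})$. That quantity might not be small, so the real point is that $1/i=o(\gamma_i)$ because $\rho<1$. Thus the forcing is $\gamma_i u_i$ with $u_i=\bigo(1/(i\gamma_i))\cdot\bigo(1)\to0$. To keep $\esp\|X_i\|$ bounded, I take $p$ close to $\alpha$ and absorb the slowly varying factors by Potter bounds.

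I expect the main obstacle to be the bookkeeping of slowly varying factors and the verification of the ultimate monotonicity and \eqref{eq:deltaw} conditions of \Cref{lem:bound-recursion-toeplitz} for each $u_i$. I would handle this uniformly by noting that every $u_i$ is regularly varying with index strictly negative, while $\prod_{j\le i}(1-L\gamma_j)^{-1}$ grows like $\exp(cL\sum\gamma_j)$, faster than any power. The ratio condition \eqref{eq:deltaw} then reduces to $(u_i-u_{i-1}v_i)/(\gamma_iu_i)\to L>0$, which holds because $u_{i}/u_{i-1}-1=\bigo(1/i)=o(\gamma_i)$. This last step requires $u_i$ to have the same $\bigo(1/i)$ increment regularity as in \eqref{eq:diffw}, and I would enforce it by replacing $u_i$ with a smooth regularly varying majorant.
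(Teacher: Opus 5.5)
Your proposal follows the paper's proof essentially step for step: the same three-way split of $X_N-\widetilde{X}^\dag_N$ into the $R^{(1)}$, $R^{(2)}$ and $(w_{N+1}-w_N)$ contributions, the same $L^1$ recursion with H\"older and the moment bound for $R^{(1)}$, the same $L^p$ martingale recursion via \Cref{lemma:p_norm_triangle_inequality} for $R^{(2)}$, each closed by \Cref{lem:bound-recursion-toeplitz} (whose hypotheses you check more carefully than the paper does). One small correction: your estimate does not make $\esp\|X_i\|$ bounded, since $w_i(\gamma_i/b_i)^{1-1/p}$ is regularly varying with index $(1/p-1/\alpha)(\rho+r)$, which is positive for every $p<\alpha$ when $\rho+r>0$; this is harmless, because $1/(i\gamma_i)$ has index $\rho-1<0$, and taking $p$ close to $\alpha$ makes $\rho-1+(1/p-1/\alpha)(\rho+r)<0$, which is exactly how the paper concludes.
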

\begin{proof}
  Recall that \Cref{assumption:regvar-GNZ,assumption:gradient_noise_new_as} imply
  \Cref{assumption:gradient_noise} for all $p\in(1,\alpha)$. Therefore, we can apply
  \Cref{th:p_framework_convergence_b>1} for any $p$ arbitrarily close to $\alpha$.  Let
  $\Delta_{N}=X_N-\widetilde{X}_N^\dag$. Then
  \begin{align*}
    \Delta_{N+1}=(\id-\gamma_{N+1}A)\Delta_{N}
    +w_{N+1}\gamma_{N+1}R^{(1)}_{N+1}+w_{N+1}\gamma_{N+1}R_{N+1}^{(2)}+R_{N+1}^{(3)}\;,
  \end{align*}
  where $R_{N+1}^{(1)}$ and $R_{N+1}^{(2)}$ are defined in \Cref{eq:Rn1} and
  \Cref{eq:Rn2},
  $R_{N+1}^{(3)}=\left(w_{N+1}-w_n\right)(\id-\gamma_{N+1}A)(\bstheta_n-\bstheta^*)$. Therefore,
  define the sequences $\{\Delta_{N}^{(i)}\}_{N\geq 1}$, $i=1,2,3$ via the recursions $\Delta_0^{(i)}=0$
  and
  \begin{align*}
    \Delta_{N+1}^{(1)}
    &=(\id-\gamma_{N+1}A)\Delta_{N}^{(1)}+w_{N+1}\gamma_{N+1}R^{(1)}_{N+1},\\
    \Delta_{N+1}^{(2)}
    &=(\id-\gamma_{N+1}A)\Delta_{N}^{(2)}+w_{N+1}\gamma_{N+1}R_{N+1}^{(2)},\\
    \Delta_{N+1}^{(3)}
    &=(\id-\gamma_{N+1}A)\Delta_{N}^{(3)}+R_{N+1}^{(3)}\;.
  \end{align*}
  Note that $\Delta_{N}=\Delta_{N}^{(1)}+\Delta_{N}^{(2)}+\Delta_{N}^{(3)}$. By
  \Cref{assumption:function_H} along with \Cref{lemma:function_H} and the standard norm
  inequality, we have
  \begin{align}
    \label{eq:seq_eq_error_1}
    \esp[\|\Delta_{N+1}^{(1)}\|]
    & \leq(1-\gamma_{N+1}L)^{{1/p}}\esp[\|\Delta_{N}^{(1)}\|]
      +w_{N+1}\gamma_{N+1}\esp[\|R^{(1)}_{N+1}\|]\;,\\
    \label{eq:seq_eq_error_3}
    \esp[\|\Delta_{N+1}^{(3)}\|]
    & \leq(1-\gamma_{N+1}L)^{{1/p}}\esp[\|\Delta_{N}^{(3)}\|]+\esp[\|R^{(3)}_{N+1}\|]
    \;.
  \end{align}
  Let $q\in(1,\alpha)$ and $p\in(q,\alpha)$. By \Cref{assumption:function_H_taylor_exp}
  and \Cref{th:p_framework_convergence_b>1}, we have
  \begin{align}\label{eq:R_N(1)}
    \esp[\|R_{N+1}^{(1)}\|]
    &\leq C\esp[\|\bstheta_N-\bstheta^*\|^q] \leq C\left(\esp[\|\bstheta_N-\bstheta^*\|^p]\right)^{\frac{q}{p}}
      \leq C  \left(\frac{\gamma_{N+1}}{b_{N+1}}\right)^{(p-1)\frac{q}{p}}\;.
  \end{align}
  Again, by \Cref{th:p_framework_convergence_b>1} we have
  \begin{align*}
    \esp[\|R_{N+1}^{(3)}\|]
    \leq w_N\left(\frac{w_{N+1}}{w_N}-1\right)\esp[\|\bstheta_N-\bstheta^*\|]
    \leq (w_{N+1}-w_N)C\left(\frac{\gamma_N}{b_N}\right)^{1-\frac{1}{p}}\;.
  \end{align*}
  Next, applying \Cref{lemma:p_norm_triangle_inequality} and
  \Cref{assumption:gradient_noise}, we have
  \begin{align*}
    \esp[\|\Delta_{N+1}^{(2)}\|^p]
    &\leq(1-\gamma_{N+1}L)\esp[\|\Delta_{N}^{(2)}\|^p]
      +4w^p_{N+1}\gamma^p_{N+1}\esp[\|R^{(2)}_{N+1}\|^p] \\
    &\phantom{\leq + (1}+pw_{N+1}\gamma_{N+1}
      \esp[  (R^{(2)}_{N+1})^\top (\Delta_{N}^{(2)})^{\langle p-1\rangle } ]\;. 
  \end{align*}
  The expectation in the last line vanishes by the same argument as in the proof of
  \Cref{th:p_framework_convergence_b>1}. Indeed, $\Delta_{N}^{(2)}$ is
  $\mathcal{F}_N$-measurable, while the conditional expectation of the $j$th component
  ($j=1,\ldots,d$) of $R^{(2)}_{N+1}$ is zero.  Thus, with $p$ arbitrarily close to
  $\alpha$, we have
  \begin{align}
    \label{eq:seq_eq_error_2}
    \esp[\|\Delta_{N+1}^{(2)}\|^p]
    &\leq(1-\gamma_{N+1}L)\esp[\|\Delta_{N}^{(2)}\|^p]
      +4w^p_{N+1}\gamma^p_{N+1}\esp[\|R^{(2)}_{N+1}\|^p]\;.
  \end{align}

  By \Cref{assumption:gradient_noise_new_as} and \Cref{th:p_framework_convergence_b>1} we
  have
  \begin{align*}
    \esp[\|R_{N+1}^{(2)}\|^p]
    &\leq \frac{C}{b_{N}^{p-1}}\esp[\|\bstheta_N-\bstheta^*\|^p]
      \leq \frac{C}{b_{N}^{p-1}} \left(\frac{\gamma_{N}}{b_n}\right)^{p-1}\;.
  \end{align*}
  
  Therefore, for each inequality in \eqref{eq:seq_eq_error_1}, \eqref{eq:seq_eq_error_3},
  \eqref{eq:seq_eq_error_2}, we apply \Cref{lem:bound-recursion-toeplitz} and obtain
  \begin{align*}
    \esp[\|\Delta_{N+1}^{(1)}\|]
    &=\bigo\left(w_{N+1}\left(\frac{\gamma_{N+1}}{b_{N+1}}\right)^{(p-1)\frac{q}{p}}\right),\\
    \esp[\|\Delta_{N+1}^{(2)}\|^p]
    &=\bigo\left(\frac{\gamma_{N+1}^{p-1}w_{N+1}^p}{b_{N}^{p-1}}
      \left(\frac{\gamma_{N}}{b_N}\right)^{p-1}\right),\\
    \esp[\|\Delta_{N+1}^{(3)}\|]
    &=\bigo\left((w_{N+1}-w_N)
      \left(\frac{\gamma_N}{b_N}\right)^{1-\frac{1}{p}}\gamma_N^{-1}\right).
  \end{align*}
  By \Cref{assumption:regvar-GNZ} the gradient noise is regularly varying with
  index~$\alpha$, thus for any arbitrarily small $\epsilon>0$,
  $w_N=\bigo(b_N/\gamma_N)^{1-1/\alpha+\epsilon}$.  Choose now $p$ and $q$ close enough to
  $\alpha$ so that $(p-1)p/q>\alpha-1-\epsilon$. Then
  \begin{align*}
    w_{N+1}\left(\frac{\gamma_{N+1}}{b_{N+1}}\right)^{(p-1)\frac{q}{p}}
    = \bigo\left(\left(\frac{\gamma_{N+1}}{b_{N+1}}
    \right)^{\frac{(\alpha-1)^2}\alpha-2\epsilon}\right)
    = \smallo(1) \; .
  \end{align*}
  We can also choose $p$ such that $2p-2>2\alpha-2+\epsilon$ and
  $w_{N+1}^p =\bigo(b_{N+1}/\gamma_{N+1})^{\alpha-1+\epsilon }$, thus
  \begin{align*}
    \frac{\gamma_{N+1}^{p-1} w_{N+1}^p}{b_{N}^{p-1}}
    \left(\frac{\gamma_{N}}{b_N}\right)^{p-1}
    = \bigo\left(\left(\frac{\gamma_{N+1}}{b_{N+1}}\right)^{\alpha-1-2\epsilon}\right)
    = \smallo(1) \; .
  \end{align*}
  Since $\{w_i\}_{i\geq 0}$ is regularly varying with index $(1-1/\alpha)(\rho+r)$ and we have assumed
  that $w_{i+1}-w_i=\bigo(w_i/i)$, we obtain
    \begin{align*}
      (w_{i+1}-w_i)\Big(\frac{\gamma_i}{b_i}\Big)^{1-\frac{1}{p}}
    \end{align*}
    is regularly varying with index
    \begin{align*}
      (1/p-1/\alpha)(\rho+r)-1 \; .
    \end{align*}
    Since $\rho<1$, we can choose $p$ large enough so that
    \begin{align*}
      \rho+      (1/p-1/\alpha)(\rho+r)-1 < 0 \; , 
    \end{align*}
    which yields that $\esp[\|\Delta_{N+1}^{(3)}\|]=\smallo(1)$.  Altogether, these
  bounds prove that $\Delta_{N}=\smallo_P(1)$.
\end{proof}

\begin{lemma}
  \label{lem:next_term}
  Under the assumptions of \Cref{th:averaging_stable_convergence},
  \begin{align}
    \label{eq:V_N=o(c_N)}
    V_N=\sum_{i=1}^N\sum_{j=i}^{N}
    \left(\prod_{k=i+1}^{j}(\id-A\gamma_{k})\right)\gamma_i R_i
    = \smallo_{\pr}(c_N)\;.
  \end{align}
\end{lemma}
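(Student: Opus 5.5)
We plan to rewrite $V_N=\sum_{i=1}^N G_{N,i}R_i$ with $G_{N,i}$ as in \eqref{eq:g_n_j}, and use the bound $\vvvert G_{N,i}\vvvert\le C$ uniformly in $N,i$ from \Cref{lem:bound-on-matrices}. (Heuristically, $G_{N,i}\approx\gamma_i\sum_{j\ge i}e^{-A(r_j-r_i)}\approx A^{-1}$.) We then split $R_i=R_i^{(1)}+R_i^{(2)}$ as in \eqref{eq:Rn-decomp} and write $V_N=V_N^{(1)}+V_N^{(2)}$. Each piece is handled in $L^1$ or $L^p$ using the moment bound of \Cref{th:p_framework_convergence_b>1}. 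That theorem applies for every $p\in(1,\alpha)$, since \Cref{assumption:regvar-GNZ,assumption:gradient_noise_new_as} imply \Cref{assumption:gradient_noise}. It gives $\esp\|\bstheta_{i}-\bstheta^*\|^p=\bigo((\gamma_i/b_i)^{p-1})$.

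\textbf{The Taylor remainder $V_N^{(1)}$.} By \Cref{assumption:function_H_taylor_exp} with $q<p<\alpha$ and Jensen,
\begin{align*}
\esp\|V_N^{(1)}\|\le C\sum_{i=1}^N \esp\|\bstheta_{i-1}-\bstheta^*\|^q\le C\sum_{i=1}^N (\gamma_i/b_i)^{(p-1)q/p}.
\end{align*}
The summand is regularly varying with index $-(\rho+r)(p-1)q/p$, which can be made arbitrarily close to $-(\rho+r)(\alpha-1)$. Two cases follow.
\begin{itemize}
\item If this index exceeds $-1$, the sum is regularly varying with index $1-(\rho+r)(\alpha-1)+\epsilon'$. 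We need it to be $\smallo(c_N)$, where $c_N$ has index $(1-r(\alpha-1))/\alpha$. The required inequality $1-(\rho+r)(\alpha-1)<(1-r(\alpha-1))/\alpha$ is equivalent to $\alpha\rho>1-r(\alpha-1)$, which is exactly the second condition in \eqref{eq:conditions-r-rho}. Strict inequality leaves room for the $\epsilon'$ losses coming from $p,q<\alpha$ and from slowly varying factors.
\item Otherwise the sum is slowly varying or bounded, hence $\smallo(c_N)$ because $c_N\to\infty$.
\end{itemize}

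\textbf{The martingale remainder $V_N^{(2)}$.} Here $R_i^{(2)}=e_i(\bstheta^*)-e_i(\bstheta_{i-1})$ is a martingale difference with respect to $\mathcal{F}_i$, and the weights $G_{N,i}$ are deterministic. We therefore use a von Bahr–Esseen type bound for martingales with $p\in(1,\alpha)$; alternatively, iterate \Cref{lemma:p_norm_triangle_inequality} on the partial sums, where the cross terms vanish in conditional expectation as in the proof of \Cref{th:p_framework_convergence_b>1}. This gives
\begin{align*}
\esp\|V_N^{(2)}\|^p\le C\sum_{i=1}^N\esp\|R_i^{(2)}\|^p\le C\sum_{i=1}^N b_i^{1-p}(\gamma_i/b_i)^{p-1},
\end{align*}
using \Cref{assumption:gradient_noise_new_as}. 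The summand has index $-(p-1)(2r+\rho)$, so the sum is at most regularly varying with index $\max(0,1-(p-1)(2r+\rho))+\epsilon'$. We compare this with $c_N^p$, whose index is $p(1-r(\alpha-1))/\alpha$. Letting $p\uparrow\alpha$, the required inequality reduces to $1-(\alpha-1)(2r+\rho)<1-r(\alpha-1)$, i.e. $(\alpha-1)(r+\rho)>0$. This holds unless $r=\rho=0$, which the second condition of \eqref{eq:conditions-r-rho} excludes. Markov's inequality then yields $V_N^{(2)}=\smallo_\pr(c_N)$.

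\textbf{Main obstacle.} The delicate point is the exponent bookkeeping for $V_N^{(1)}$. That bound is only in $L^1$ with exponents $q<p<\alpha$, so one must verify that the strict inequality $\alpha\rho>1-r(\alpha-1)$ absorbs every loss: $p,q\to\alpha$, Potter bounds on the slowly varying parts of $\gamma_i$, $b_i$ and $c_N$, and the shift of index from $i-1$ to $i$. A secondary point is to confirm that \Cref{lem:bound-on-matrices} really provides the uniform bound on $\vvvert G_{N,i}\vvvert$. If it does not, we would derive it directly from $\vvvert \id-\gamma_k A\vvvert\le (1-L\gamma_k)$ and the comparison $\gamma_i\sum_{j\ge i}e^{-L(r_j-r_i)}=\bigo(1)$. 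That comparison uses the concavity of $1/\gamma_i$, which gives $\gamma_j/\gamma_i$ control.
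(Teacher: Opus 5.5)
Your proposal is correct and follows the paper's proof essentially step by step. The shared steps are the split $V_N=\sum_i G_{N,i}R_i^{(1)}+\sum_i G_{N,i}R_i^{(2)}$ with the uniform bound from \Cref{lem:bound-on-matrices}, an $L^1$ bound on the first sum via \Cref{assumption:function_H_taylor_exp}, Jensen and \Cref{th:p_framework_convergence_b>1} whose exponent comparison is exactly $\alpha\rho>1-r(\alpha-1)$, and an $L^p$ martingale bound on the second sum. Using a von Bahr--Esseen-type inequality (or iterating \Cref{lemma:p_norm_triangle_inequality}) instead of the paper's appeal to Doob's inequality, and checking explicitly that the second comparison reduces to $r+\rho>0$, are just more careful versions of the paper's steps.
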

\begin{proof}
  Recall that $R_{i+1}=R_{i+1}^{(1)}+R_{i+1}^{(2)}$ with $R_{i+1}^{(1)}$ and
  $R_{i+1}^{(2)}$ defined in \Cref{eq:Rn1,eq:Rn2}, 
  and let $G_{N,i}$ be as in \eqref{eq:g_n_j}.  Then
  \begin{align*}
    V_N=    \sum_{i=1}^N  G_{N,i}R_i^{(1)}+\sum_{i=1}^N G_{N,i}R_i^{(2)}\;.
  \end{align*}
  We will study separately the two sums.  By \Cref{lem:bound-on-matrices} the matrices
  $G_{N,i}$ are uniformly bounded from above.  By \eqref{eq:R_N(1)}, we have
  \begin{align*}
    \esp\left[\left\|\sum_{i=1}^N  G_{N,i}R_i^{(1)}\right\|\right]
    & \leq \sum_{i=1}^N \vvvert G_{N,i}\vvvert\esp[\|R_i^{(1)}\|]  
     \leq C \sum_{i=1}^N\left(\frac{\gamma_i}{b_i}\right)^{(p-1)\frac{q}{p}} \; .
  \end{align*}
  The last sum is regularly varying with index $1-(\rho+r)(p-1)q/p$.  Choosing $q$ and $p$
  close enough to $\alpha$, we obtain that
  \begin{align*}
    \sum_{i=1}^N\left(\frac{\gamma_i}{b_i}\right)^{(p-1)\frac{q}{p}}
    = O\left( N^{\max\{1-(\rho+r)(\alpha-1),0\}+\epsilon   }\right)\; ,
  \end{align*}
  for some arbitrarily small $\epsilon$. On the other hand, $\{c_N\}_{N\geq 1}$ is regularly varying
  with index $(1-r(\alpha-1)/\alpha$. The condition \Cref{eq:conditions-r-rho} on the
  rates implies that $1-(\rho+r)(\alpha-1)< (1-r(\alpha-1)/\alpha$, so we conclude that
  \begin{align}
    \sum_{i=1}^N\left(\frac{\gamma_i}{b_i}\right)^{(p-1)\frac{q}{p}}
    & = \smallo(c_N) \; . \label{eq:ass-extra-1}    
  \end{align}
  Next, using \Cref{assumption:gradient_noise_new_as}, the martingale difference structure
  of $R_i^{(2)}$, Doob's inequality and \Cref{th:p_framework_convergence_b>1}, we obtain
  \begin{align*}
    \esp\left[\left\|\sum_{i=1}^N G_{N,i}R_i^{(2)}\right\|^p\right] \leq C \sum_{i=1}^N
    \frac{\esp[\|\bstheta_i-\bstheta^*\|^p]}{b_i^{p-1}} \leq C' \sum_{i=1}^N
    \frac{\gamma_i^{p-1}}{b_i^{2p-2}} \; .
  \end{align*}
  Choosing $p$ close enough to $\alpha$ yields
  \begin{align}
    \sum_{i=1}^N \left(\frac{\gamma_i}{b_i^2}\right)^{p-1}  & = \smallo(c_N^p) \;.
      \label{eq:ass-extra-2}
  \end{align}
  Gathering \Cref{eq:ass-extra-1,eq:ass-extra-2} yields \Cref{eq:V_N=o(c_N)}.
\end{proof}

Let $\preceq$ be the Loewner ordering of symmetric non-negative matrices: for two such
matrices $B,D\in \Rset^{d\times d}$ we write $B\preceq D$ if
$\mathbf{x}^\top B \mathbf{x}\leq \mathbf{x}^\top D \mathbf{x}$ for all
$\mathbf{x}\in\Rset^d$.  Recall that the matrices $G_{N,i}$ are defined in
\Cref{eq:g_n_j}.
\begin{lemma}
  \label{lem:bound-on-matrices}
  Under the assumption of \Cref{th:averaging_stable_convergence}, the matrices $G_{N,i}$
  are uniformly bounded, \ie\
  \begin{align}
    \label{eq:matrixunifbounded}
    \sup_{N\geq1}\sup_{1 \leq i \leq N} \vvvert G_{N,i} \vvvert < \infty \; .
  \end{align}
  Furthermore, for every $\epsilon>0$,
  \begin{itemize}
  \item there exists $T$ such that for $N$ large enough and  $i\leq N-T/\gamma_N$, 
    \begin{align}
      \label{eq:matrixGNilowerbound}x
      (1-\epsilon) A^{-1} \preceq       G_{N,i} \; ; 
    \end{align}
  \item there exists $i_0$ such that for all $i\geq i_0$ and $N\geq i$,
    \begin{align}
      \label{eq:matrixGNiupperbound}
      G_{N,i} \preceq (1+\epsilon) A^{-1} \; .
    \end{align}
  \end{itemize}
\end{lemma}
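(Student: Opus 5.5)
The strategy is to compare $G_{N,i}=\gamma_i\sum_{j=i}^N\prod_{k=i+1}^j(\id-\gamma_kA)$ with the Riemann sum of $\int_0^\infty e^{-sA}\rmd s=A^{-1}$. All products $\prod_{k=i+1}^j(\id-\gamma_kA)$ are polynomials in the single matrix $A$, so they commute and are simultaneously diagonalizable. $A=\nabla H(\bstheta^*)$ has real eigenvalues in $[a,b]$ by \Cref{assumption:function_H}. If $A$ is symmetric, as implicitly used in the Loewner statements, all three claims reduce to scalar statements about
\begin{align*}
g_{N,i}(\sigma)=\gamma_i\sum_{j=i}^N\prod_{k=i+1}^j(1-\gamma_k\sigma)\;,
\end{align*}
uniformly over $\sigma\in[a,b]$. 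The targets are $g_{N,i}(\sigma)\le C$, $g_{N,i}(\sigma)\ge(1-\epsilon)/\sigma$ when $i\le N-T/\gamma_N$, and $g_{N,i}(\sigma)\le(1+\epsilon)/\sigma$ when $i\ge i_0$. For the uniform bound in a general norm I would use \Cref{lemma:function_H} directly instead: $\vvvert\prod_{k=i+1}^j(\id-\gamma_kA)\vvvert\le\prod_{k=i+1}^j(1-L\gamma_k)\le e^{-L(r_j-r_i)}$, where $r_j=\sum_{k\le j}\gamma_k$. This holds for all indices because $\gamma_1L<1$ and the $\gamma_k$ are non-increasing (after possibly enlarging so that $\gamma_k\le\delta$).

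\textbf{Uniform bound.} I need $\sup_{N,i}\gamma_i\sum_{j\ge i}e^{-L(r_j-r_i)}<\infty$. Since $\gamma_i\ge\gamma_j$, we have $\gamma_i\sum_j e^{-L(r_j-r_i)}\le(\gamma_i/\gamma_{j})\,\cdot$ a Riemann sum, which is awkward. Instead I would use the concavity of $1/\gamma$ (\Cref{assumption:learning-rate-rv-again}). It gives $1/\gamma_{j}-1/\gamma_i\le(j-i)(1/\gamma_{i+1}-1/\gamma_i)$-type control, and with regular variation this yields $\gamma_i/\gamma_j\le C$ whenever $r_j-r_i\le M$, with polynomial growth otherwise. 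Concretely, split the sum over blocks where $r_j-r_i\in[m,m+1)$. Each block has at most about $1/\gamma_{j}$ terms, contributing at most $\gamma_i\gamma_{j}^{-1}e^{-Lm}$. Concavity of $1/\gamma$ gives $1/\gamma_j\le 1/\gamma_i+(j-i)c/i$-style linear growth, hence $\gamma_i/\gamma_j\le 1+C(r_j-r_i)$ roughly, because $j-i\approx(r_j-r_i)/\gamma_j$. Summing $(1+Cm)e^{-Lm}$ over $m$ gives a uniform bound. I expect this to be the main obstacle: making $\gamma_i/\gamma_j\lesssim 1+(r_j-r_i)$ rigorous from concavity and regular variation, uniformly in $i$ including small $i$. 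Small $i$ are finitely many and can be handled by crude bounds.

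\textbf{Two-sided asymptotics.} Use $\log(1-x)=-x+O(x^2)$. For large $i$, $\prod_{k=i+1}^j(1-\gamma_k\sigma)=\exp(-\sigma(r_j-r_i)+O(\sum_{k>i}^j\gamma_k^2))$, and $\sum_{k=i+1}^{j}\gamma_k^2\le\gamma_i(r_j-r_i)$. Then
\begin{align*}
g_{N,i}(\sigma)=\sum_{j=i}^N\frac{\gamma_i}{\gamma_j}\gamma_j e^{-\sigma(1+O(\gamma_i))(r_j-r_i)}\;.
\end{align*}
Since $\gamma_i/\gamma_j\ge1$ and $\sum_j\gamma_j e^{-c(r_j-r_i)}\ge\int_0^{r_N-r_i}e^{-cs}\rmd s-O(\gamma_i)$, the lower bound follows: for $i\le N-T/\gamma_N$, $r_N-r_i\ge T\gamma_N/\gamma_N=T$, so the integral exceeds $(1-e^{-cT})/c$. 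Taking $T$ large and $N$ (hence $i$, after treating bounded $i$ separately, where $\gamma_i\ge\gamma_j$ still helps) large gives \eqref{eq:matrixGNilowerbound}.

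For the upper bound \eqref{eq:matrixGNiupperbound}, split the $j$-sum at $r_j-r_i\le M$ and $r_j-r_i>M$. In the first part $\gamma_i/\gamma_j\to1$ uniformly as $i\to\infty$, by the concavity estimate above since $\gamma_i\to0$ or $\gamma$ is slowly varying there. This gives at most $(1+\epsilon/2)/\sigma$. The tail is bounded by $\sum_{m\ge M}(1+Cm)e^{-am}$, which is small for large $M$. Both steps are uniform in $\sigma\in[a,b]$, and diagonalization transfers the scalar bounds to the Loewner inequalities.
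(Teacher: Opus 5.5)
Your overall picture, reading $g_{N,i}(\sigma)$ as a Riemann sum for $\int_0^\infty e^{-\sigma s}\,\rmd s=1/\sigma$ on the time scale $r_j=\sum_{k\le j}\gamma_k$, is sound and differs from the paper's. There are, however, two gaps.

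\textbf{The ratio estimate is misstated.} The step you flag as the main obstacle does not hold as written, and both your tail bound for \eqref{eq:matrixGNiupperbound} and your block argument for \eqref{eq:matrixunifbounded} rest on it. The inequality $\gamma_i/\gamma_j\le 1+C(r_j-r_i)$ is false in general. Take $\gamma_k=k^{-\rho}$ with $\rho\in(1/2,1)$, so that $1/\gamma$ is concave, fix $i$ and let $j\to\infty$: then $\gamma_i/\gamma_j\asymp j^{\rho}$ while $r_j-r_i\asymp j^{1-\rho}$. Your derivation is also circular, because substituting $j-i\le (r_j-r_i)/\gamma_j$ puts the factor $\gamma_i/\gamma_j$ back on the right-hand side.

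What concavity actually gives is an exponential bound with vanishing rate. Let $\Delta_k=1/\gamma_{k+1}-1/\gamma_k$; these are non-increasing and tend to $0$ because $1/\gamma_k=o(k)$. Then
\begin{align*}
\log\frac{\gamma_i}{\gamma_j}=\sum_{k=i}^{j-1}\log(1+\gamma_k\Delta_k)\le\Delta_i\sum_{k=i}^{j-1}\gamma_k\le\Delta_i\,(\gamma_i+r_j-r_i)\;.
\end{align*}
This gives $\gamma_i/\gamma_j\to1$ uniformly on $\{r_j-r_i\le M\}$. Once $\Delta_i\le\sigma/2$, it also bounds the tail by $e^{\sigma\gamma_i/2}\int_{M-\gamma_1}^\infty e^{-\sigma s/2}\,\rmd s$. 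With this in place, your proof of \eqref{eq:matrixGNiupperbound} goes through.

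\textbf{The lower bound misses bounded $i$.} The statement includes bounded $i$, and your argument does not cover them. There the $O(\gamma_i)$ in the exponent does not vanish, and ``$\gamma_i\ge\gamma_j$ still helps'' is not an argument. An exact telescoping handles all $i$ at once. With $P_{i,j}=\prod_{k=i+1}^j(1-\sigma\gamma_k)$ one has $\sigma\sum_{j=i}^N\gamma_{j+1}P_{i,j}=1-P_{i,N+1}$. Since $\gamma_i\ge\gamma_{j+1}$, this gives $g_{N,i}(\sigma)\ge(1-P_{i,N})/\sigma\ge(1-e^{-\sigma T})/\sigma$ whenever $i\le N-T/\gamma_N$.

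\textbf{Comparison with the paper.} The paper never needs a ratio bound. Abel summation gives the exact identity $g_i(\sigma)=\frac1\sigma+\frac{\gamma_i}{\sigma}\sum_{j\ge i}\Delta_jP_{i,j}$ for $g_i=\lim_N g_{N,i}$. Concavity ($\Delta_j\le\Delta_i$) turns this into $g_i\le\frac1\sigma+\frac{\Delta_i}{\sigma}g_i$, that is, $g_{N,i}(\sigma)\le g_i(\sigma)\le 1/(\sigma-\Delta_i)$. The lower bound is just $P_{i,j}\ge(1-\sigma\gamma_i)^{j-i}$ and a geometric sum. That route is shorter and exact.

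What your route buys is independence from concavity. Regular variation alone, via Potter bounds and uniform convergence, gives both $\gamma_i/\gamma_j\to1$ on $\{r_j-r_i\le M\}$ and $\gamma_i/\gamma_j\le C(1+r_j-r_i)^{\kappa}$ uniformly in large $i$, which suffices for your tail. The Abel-summation trick, by contrast, uses concavity essentially.

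Finally, \eqref{eq:matrixunifbounded} needs no separate block argument. It follows from \eqref{eq:matrixGNiupperbound} for $i\ge i_0$, together with the finitely many $i<i_0$, each giving a convergent series. Your wariness about pulling $\gamma_i$ out of the sum is justified. The paper's step bounding $\gamma_i$ by $\gamma_1$ does not by itself give uniformity in $i$, since $\sum_{j\ge i}P_{i,j}$ is of order $1/\gamma_i$.
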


\begin{proof}
   For $\sigma\in (0,1/\gamma_1)$, write
  \begin{align*}
    g_{N,i}(\sigma)=\gamma_i\sum_{j=i}^N\prod_{k=i+1}^j(1-\sigma\gamma_{k}) \; .
  \end{align*}
  Let $\sigma$ be an eigenvalue of $A$ and $\mathbf{x}$ an associated eigenvector. Then
  \begin{align*}
      G_{N,i}\mathbf{x} = \gamma_i\sum_{j=i}^N\prod_{k=i+1}^j(1-\sigma\gamma_{k}) \mathbf{x} \; .
  \end{align*}
  Thus $\mathbf{x}$ is an eigenvector for $G_{N,i}$ with respect to the
  eigenvalue~$g_{N,i}(\sigma)$. Let $\sigma_0$ be the smallest eigenvalue of $A$. Then
  \begin{align*}
    g_{N,i}(\sigma_0) \leq \gamma_1\sum_{j=1}^N\prod_{k=i+1}^j(1-\sigma_0\gamma_{k})  \; .
  \end{align*}
  This series is summable since $\{\gamma_i\}_{i\geq 1}$ is regularly varying with index
  $\rho\in[0,1)$. This proves that the eigenvalues of $G_{N,i}$ are uniformly bounded,
  therefore \Cref{eq:matrixunifbounded} holds. To prove
  \Cref{eq:matrixGNilowerbound,eq:matrixGNiupperbound}, since $A$ is diagonalizable, we
  only need to prove that for every $\sigma>0$ and $\epsilon>0$, we have
  \begin{align*}
    \frac{1-\epsilon}\sigma \leq \gamma_i\sum_{j=i}^N\prod_{k=i+1}^j(1-\sigma\gamma_{k})
    \leq \frac{1+\epsilon}\sigma \; , 
  \end{align*}
  where the lower bound is valid for $i$ not too large and the upper bound for $i$ not too
  small.

  Since $\{\gamma_i\}_{i\geq 1}$ is regularly varying with index $\rho\in[0,1)$, the series
  $\prod_{k=i+1}^j(1-\sigma\gamma_{k}) $ is summable and we define the sequence
  $\{{g}_i\}_{i\geq 1}$ by
  \begin{align*}
    {g}_i(\sigma)
    =\lim_{N\to\infty}{g}_{N,i}(\sigma) = \gamma_i\sum_{j=i}^\infty
    \prod_{k=i+1}^j(1-\sigma\gamma_{k}) \; .
  \end{align*}
  Then,
  \begin{align*}
    {g}_i(\sigma)
    & = \gamma_i\sum_{j=i}^\infty \frac{1-(1-\sigma\gamma_{j+1})}{\sigma\gamma_{j+1}}
      \prod_{k=i+1}^j(1-\sigma\gamma_{k})\\
    & = \gamma_i\sum_{j=i}^\infty\frac{1}{\sigma\gamma_{j+1}}\prod_{k=i+1}^j(1-\sigma\gamma_{k})
      -   \gamma_i \sum_{j=i}^\infty \frac{1}{\sigma\gamma_{j+1}}
      \prod_{k=i+1}^{j+1}(1-\sigma\gamma_{k}) \\
    & = \gamma_i \sum_{j=i}^\infty \frac{1}{\sigma\gamma_{j+1}}\prod_{k=i+1}^j(1-\sigma\gamma_{k})-
      \gamma_i\sum_{j=i+1}^\infty \frac{1}{\sigma\gamma_{j}}\prod_{k=i+1}^{j}(1-\sigma\gamma_{k}) \\
    & = \frac{\gamma_i}{\sigma} \sum_{j=i}^\infty
      \left(\frac{1}{\gamma_{j+1}} - \frac{1}{\gamma_{j}}\right)
      \prod_{k=i+1}^j(1-\sigma\gamma_{k}) + \frac1{\sigma} \; .
  \end{align*}
  Since $\{1/\gamma_i\}_{i\geq 1}$ is concave, we have 
  \begin{align*}
    g_i(\sigma)
    & \leq  \frac1\sigma\left(\frac{1}{\gamma_{i+1}}-\frac{1}{\gamma_{i}}\right)
      \gamma_i\sum_{j=i}^\infty\prod_{k=i+1}^j(1-\sigma\gamma_{k}) + \frac1{\sigma}  = \frac1{\sigma}\left(\frac{1}{\gamma_{i+1}}-\frac{1}{\gamma_{i}}\right){g}_i(\sigma)
      + \frac1{\sigma} \; .
  \end{align*}
  Also, the concavity of $\{1/\gamma_i\}_{i\geq 1}$ and the fact that $\{\gamma_i\}_{i\geq 1}$ is regularly
  varying with index $\rho\in[0,1)$ imply that
  $\lim_{i\to\infty} (1/\gamma_{i+1}-1/\gamma_i)=0$.  Fix an arbitrarily small
  $\epsilon>0$, set $\epsilon'=\sigma\epsilon/(1+\epsilon)$ and choose large enough $i_0$
  such that $(1/\gamma_{i+1}-1/\gamma_i)\leq\epsilon'$ for all $i\geq i_0$. Then,
    \begin{align}
      g_{N,i}(\sigma) \leq      g_i(\sigma)
      & \leq  \frac{1}{\sigma} \frac1
        {1-\frac1\sigma\left(\frac{1}{\gamma_{i+1}}-\frac{1}{\gamma_{i}}\right)} 
        \leq  \frac{1}{\sigma} \frac1 {1-\frac{\epsilon'}\sigma} = \frac{1+\epsilon}{\sigma} \; .
    \end{align}
 
    For the lower bound, since $\{\gamma_i\}_{i\geq 1}$ is non-increasing, we have, for
    $K\leq N$ and $i\leq K$,
  \begin{align*}
    g_{N,i}(\sigma)
    & \geq \gamma_i\sum_{j=i}^N (1-\sigma\gamma_{j})^{j-i} 
      \geq \gamma_i\sum_{j=i}^N (1-\sigma \gamma_i)^{j-i}\\
    & = \frac{1}{\sigma}\left(1-(1-\sigma\gamma_{i})^{N-i+1}\right) 
     \geq  \frac{1}{\sigma}\left(1-(1-\sigma\gamma_{K})^{N-K+1}\right) \; . 
  \end{align*}
  Choose $K=N-T/\gamma_N$. Then
  \begin{align*}
    \log (1-\sigma\gamma_{K})^{N-K+1} = -\frac{T}{\gamma_N}\log(1-\sigma\gamma_{N-T/\gamma_N})
    \sim \frac{\sigma T\gamma_{N-T/\gamma_N}} {\gamma_N}\; .
  \end{align*}
  By regular variation of $\{\gamma_i\}_{i\geq 1}$, we have
  $\lim_{N\to\infty} \frac{\gamma_N}{\gamma_{N-T/\gamma_N}} =1$. Thus, for $\epsilon>0$
  and large enough $T$, we have, for $i\leq N-T/\gamma_N$, 
  \begin{align*}    
    g_{N,i}(\sigma)  \geq \frac{1-\epsilon}\sigma \; .
  \end{align*}

\end{proof}

\begin{lemma}
  \label{lem:carac-c_N}
  Under \Cref{assumption:learning-rate-rv-again}, 
  \begin{align*}
    \lim_{n\to\infty} \sum_{i=1}^N b_i\overline{F}_0(b_ic_N) = 1 \; .
  \end{align*}
\end{lemma}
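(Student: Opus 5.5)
The plan is to write $\overline{F}_0(x)=x^{-\alpha}/\ell_0(x^\alpha)$ and set $x_N=\beta_N b_N^\alpha$. By definition $c_N=b_N^{-1}\quantilefunction{F}[0](1-1/x_N)$, so that $b_Nc_N=\quantilefunction{F}[0](1-1/x_N)=(x_N\ell_0^\#(x_N))^{1/\alpha}$. This is \Cref{eq:scaling_sequence_averaging}. Since $x_N\to\infty$, the quantile property of regularly varying tails gives
\begin{align*}
  x_N\overline{F}_0(b_Nc_N)\to 1 \; ,
\end{align*}
or, equivalently, $\beta_N b_N\overline{F}_0(b_Nc_N)\to1$ after using $x_N=\beta_N b_N^\alpha$. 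This is the same computation as the one showing $\kappa_i/\gamma_i\to1$ in the proof of \Cref{th:stable_law_convergence}, via \Cref{eq:carac-debruijn}. We do not need continuity of $F_0$: the relation $\overline{F}_0(\quantilefunction{F}[0](1-1/x))\sim 1/x$ holds for regularly varying tails.

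The rest of the argument compares each term with the last one. Write
\begin{align*}
  b_i\overline{F}_0(b_ic_N)=b_i\overline{F}_0(b_Nc_N)\,
  \frac{\overline{F}_0(b_ic_N)}{\overline{F}_0(b_Nc_N)} \; .
\end{align*}
Heuristically the ratio is $(b_i/b_N)^{-\alpha}$, so the sum behaves like
\begin{align*}
  b_N^{\alpha}\overline{F}_0(b_Nc_N)\sum_{i\le N} b_i^{1-\alpha}=\beta_Nb_N^\alpha\overline{F}_0(b_Nc_N)\to1 \; .
\end{align*}
To make this rigorous, I split the sum at $i=\lfloor\delta N\rfloor$ for a small $\delta\in(0,1)$.

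For $\delta N\le i\le N$, I use that $b_i/b_N\in[\delta^r/2,1]$ for large $N$, by uniform convergence for the regularly varying sequence $b$. The uniform convergence theorem for $\overline{F}_0$ on the compact set $[\delta^r/2,1]$ then gives $\overline{F}_0(b_ic_N)/\overline{F}_0(b_Nc_N)=(b_i/b_N)^{-\alpha}(1+o(1))$ uniformly in $i$. Here I use $b_Nc_N\to\infty$, since $c_N\to\infty$ and $b_N\ge1$. This block is therefore asymptotic to $b_N^\alpha\overline{F}_0(b_Nc_N)\sum_{\delta N\le i\le N}b_i^{1-\alpha}$. Since $b^{1-\alpha}$ is regularly varying with index $r(1-\alpha)>-1$ by \Cref{eq:conditions-r-rho}, Karamata's theorem gives
\begin{align*}
  \sum_{\delta N\le i\le N}b_i^{1-\alpha}\big/\beta_N\to1-\delta^{1-r(\alpha-1)} \; .
\end{align*}

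For $i<\delta N$, I would apply Potter's bound with a small $\eta>0$. This is valid because $b_ic_N\ge c_N\to\infty$, so both arguments of $\overline{F}_0$ are large. It gives $\overline{F}_0(b_ic_N)/\overline{F}_0(b_Nc_N)\le C(b_i/b_N)^{-\alpha-\eta}$. The first block is then bounded by
\begin{align*}
  C\,b_N^{\alpha+\eta}\overline{F}_0(b_Nc_N)\sum_{i<\delta N}b_i^{1-\alpha-\eta} \; .
\end{align*}
Taking $\eta$ small enough that $r(1-\alpha-\eta)>-1$ (possible by \Cref{eq:conditions-r-rho}), Karamata gives $\sum_{i<\delta N}b_i^{1-\alpha-\eta}\sim\delta^{1+r(1-\alpha-\eta)}N b_N^{1-\alpha-\eta}/(1+r(1-\alpha-\eta))$. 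We also have $\beta_N\sim Nb_N^{1-\alpha}/(1-r(\alpha-1))$. Hence the first block is $O(\delta^{1-r(\alpha-1)-r\eta})$ times $\beta_Nb_N^\alpha\overline{F}_0(b_Nc_N)$, which tends to $1$.

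Letting $N\to\infty$ and then $\delta\to0$ gives the limit $1$. The main obstacle is the small-$i$ block, specifically justifying Potter's bound when $b_i$ stays bounded while $b_N\to\infty$. This requires only that $c_N\to\infty$, and that is where the condition $r(\alpha-1)<1$ enters. If $\{b_i\}$ is bounded, the case $r=0$ with slowly varying $b$ is covered by the same argument with $\delta^{1}$ bounds.
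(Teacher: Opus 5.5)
Your proof is correct and follows the paper's argument. Both normalize by the last term via $\beta_N b_N^\alpha\overline{F}_0(b_Nc_N)\to1$ (the de Bruijn step), split the sum at $\epsilon N$, bound the lower block with Potter's inequality, and handle the upper block with the uniform convergence theorem plus Karamata, which gives $1-\epsilon^{1-r(\alpha-1)}$. The only difference is cosmetic: you apply Potter and uniform convergence to $\overline{F}_0$ directly, whereas the paper applies them to $\ell_0$.

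There is one typo: where you write $\beta_N b_N\overline{F}_0(b_Nc_N)\to1$, it should read $\beta_N b_N^\alpha\overline{F}_0(b_Nc_N)\to1$, which is the form you use later.
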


\begin{proof}
  By \Cref{eq:scaling_sequence_averaging} and the property \Cref{eq:carac-debruijn} of
  the de Bruijn conjugate, we have
  \begin{align*}
    \sum_{i=1}^N b_i\overline{F}_0(b_ic_N)
    &  = \frac1{\beta_N\ell_0^\#(\beta_Nb_N^\alpha)}  \sum_{i=1}^N \frac{b_i^{1-\alpha}}
      {\ell_0(b_i^\alpha c_N^\alpha)}  
      \sim \frac1{\beta_N}  \sum_{i=1}^N \frac{b_i^{1-\alpha} \ell_0(b_N^\alpha c_N^\alpha)}
      {\ell_0(b_i^\alpha c_N^\alpha)}  \; .
  \end{align*}
  Since $\{b_i\}_{i\geq 1}$ is non-decreasing, for all $i$, by Potter's theorem
  \cite[Theorem~1.5.6]{bingham:goldie:teugels:1989}, we have, for any constant $C>1$,
  $\delta>0$ and $N$ large enough,
  \begin{align*}
    \frac{\ell_0(b_ic_N)}{\ell_0(b_Nc_N)} \leq C \left(\frac{b_i}{b_N}\right)^{-\delta}  \; .
  \end{align*}
  Choose $\delta$ such that $r(\alpha-1+\delta)<1$ and fix $\epsilon>0$. By regular
  variation of the sequence $\{b_i\}_{i\geq 1}$, we obtain
  \begin{align*}
    \limsup_{N\to\infty} \frac1{\beta_N} \sum_{i=1}^{\epsilon N}
    \frac{b_i^{1-\alpha} \ell_0(b_N^\alpha c_N^\alpha)} {\ell_0(b_i^\alpha c_N^\alpha)}
    \leq \limsup_{N\to\infty} \frac{C}{\beta_N} \sum_{i=1}^{\epsilon N} b_i^{1-\alpha-\delta} b_N^\delta
    = \bigo\left( \epsilon^{1-r(\alpha-1+\delta)} \right) \; .
  \end{align*}
  By the Uniform Convergence Theorem,
  \begin{align*}
    \frac{\ell_0(b_ic_N)}{\ell_0(b_Nc_N)}
    =     \frac{\ell_0(\frac{b_i}{b_N}b_Nc_N)}{\ell_0(b_Nc_N)} \to 1 \; ,
  \end{align*}
  uniformly \wrt~$i\geq \epsilon N$. Therefore,
  \begin{align*}
    \limsup_{N\to\infty}   \frac1{\beta_N}
    \sum_{i=\epsilon N}^N b_i^{1-\alpha} \left|\frac{\ell_0(b_N^\alpha c_N^\alpha)}
      {\ell_0(b_i^\alpha c_N^\alpha)}-1 \right| = 0 \; .
  \end{align*}
  Finally,
  \begin{align*}
    \lim_{N\to\infty}   \frac1{\beta_N} \sum_{i=\epsilon N}^N b_i^{1-\alpha}
    = 1 - \epsilon^{1-r(\alpha-1)} \; .
  \end{align*}
  Since $\epsilon$ is arbitrary, this concludes the proof. 
\end{proof}


\end{document}